\newtheorem{defi}{Definition}[section] 
\newtheorem{theo}[defi]{Theorem}
\newtheorem{theorem}[defi]{Theorem}
\newtheorem{coro}[defi]{Corollary} 
\newtheorem{lemme}[defi]{Lemma}
\newtheorem{lemma}[defi]{Lemma}
\newtheorem{prop}[defi]{Proposition}
\newtheorem{conj}[defi]{Question}
\newcommand{\fonction}[5]{\begin{array}{l|ccc}
{#1}  & {#2} & \longrightarrow & {#3} \\
        & {#4} & \longmapsto & {#5} 
    \end{array}
    }
\theoremstyle{remark}
\newcommand{\Ss}{\mathbb{S}}
\newcommand{\proj}{\mathbb{P}}
\newcommand{\R}{\mathbb{R}} 
\newcommand{\h}{\mathbb{H}}
\newcommand{\N}{\mathbb{N}} 
\newcommand{\e}{\varepsilon}
\newcommand{\p}{\varphi}
\newcommand{\Stab}{\mathrm{Stab}}
\newcommand{\PSL}{\mathrm{PSL}}
\newcommand{\SL}{\mathrm{SL}}
\newcommand{\AdS}{\mathrm{AdS}}
\newcommand{\ADS}{\mathbb{A}\mathrm{d}\mathbb{S}}
\newcommand{\PO}{\mathrm{PO}}
\newcommand{\Ach}{\mathrm{Arccosh}}
\newcommand{\Hdim}{\mathrm{Hdim}}
\newcommand{\tx}{\tilde{x}}
\newcommand{\ty}{\tilde{y}}
\newcommand{\tz}{\tilde{z}}
\newcommand{\txi}{\tilde{\xi}}
\newcommand{\teta}{\tilde{\eta}}
\newcommand{\ttau}{\tilde{\tau}}
\newcommand{\intff}[2]{\mathopen{[}#1\,,#2\mathclose{]}}
\renewcommand{\tilde}{\widetilde}
\newcommand{\pscal}[2]{\langle #1\, |\, #2 \rangle }
\newcommand{\pscalb}[2]{\langle #1\, |\, #2 \rangle_{p,q+1} }
\newcommand{\pscalc}[2]{\langle #1\, |\, #2 \rangle_{n,2} }
\newcommand{\tv}{\rightarrow}
\newcommand{\G}{\Gamma}
\newcommand{\g}{\gamma}
\newcommand{\St}{{\widetilde{\Sigma}}}
\DeclareMathOperator{\Card}{Card}
\DeclareMathOperator{\Id}{Id}
\DeclareMathOperator{\supp}{supp \,}
\DeclareMathOperator{\vol}{Vol}
\DeclareMathOperator{\Vol}{Vol}
\newcommand{\Hyp}{\mathbb{H}}
\title{ Critical exponent and Hausdorff dimension in pseudo-Riemannian hyperbolic geometry} 
\author{Olivier Glorieux\footnote{O. Glorieux is supported by CNPq grant 16088/2015-1} \, And Daniel Monclair \footnote{D. Monclair is supported by National Research Fund, Luxembourg}}
\begin{document}
\maketitle

\begin{abstract}
The aim of this article is to understand the geometry of limit sets in pseudo-Riemannian hyperbolic geometry. We focus on a class of subgroups of $\PO(p,q+1)$ introduced by Danciger, Guéritaud and Kassel, called $\h^{p,q}$-convex cocompact. We define a pseudo-Riemannian analogue of critical exponent and Hausdorff  dimension of the limit set. We show that they are equal and bounded from above by the usual Hausdorff dimension of the limit set. We also prove a rigidity result in $\h^{2,1}=\ADS^3$ which can be understood as a Lorentzian version of a famous Theorem of R. Bowen in  $3$-dimensional hyperbolic geometry. 
\end{abstract}
\setcounter{tocdepth}{1}
\tableofcontents

\section{Introduction}
\indent Limit sets of discrete groups of isometries of real hyperbolic space (and more generally rank one symmetric spaces) are a central theme in hyperbolic geometry. They are especially nice for convex cocompact groups, as they have nice dynamical properties as well as a fractal nature that is well understood. Indeed, it is known that the Hausdorff dimension of the limit set is equal to the critical exponent, which is a dynamical invariant of the action on the hyperbolic space.\\
\indent In higher rank, limit sets are much more complicated. The main reason for this is that the visual boundary of a higher rank symmetric space is not a homogeneous space, but a more involved object stratified by collections of orbits of the isometry group.\\
\indent Looking at convex cocompact subgroups of a higher rank simple Lie group $G$ (i.e. that act cocompactly on a convex subset of the Riemannian symmetric space of $G$) will not produce an interesting family of limit sets, as it was proved independently by J-F. Quint \cite{Quint2005} as well as B. Kleiner and B. Leeb \cite{KleinerLeeb} that these groups are uniform lattices.\\
\indent   Anosov groups  were introduced by Labourie \cite{labourie2006anosov} as subgroups of semi-simple Lie groups having dynamical properties that mimic  convex cocompact groups in rank one. One of their most important features is that they have a nice closed invariant set in a compact homogeneous space, that we can consider as a limit set.\\
 \indent However, the geometry of these limit sets is yet to be understood, and it can be quite different from rank one examples. For Hitchin representations in $\SL(n,\R)$ (which were the motivation for the definition of the Anosov property in \cite{labourie2006anosov}), one finds a limit set in $\R\proj^{n-1}$ which is a $\mathcal C^1$ circle. In particular, the Hausdorff dimension is not necessarily the most relevant invariant in order to understand the geometry of these limit sets.\\
 \indent The goal of this paper is to understand the geometry of these limit sets for a specific class of subgroups of $\PO(p,q+1)$ introduced by J. Danciger, F. Guéritaud and F. Kassel in \cite{DGK} that are related to pseudo-Riemannian hyperbolic geometry. More precisely, we will define  a geometric quantity associated to the limit set which we call the pseudo-Riemannian Hausdorff dimension (its generalizes the Hausdorff dimension), and show that it is equal to a notion of critical exponent in pseudo-Riemannian hyperbolic geometry.  

\subsection{$\h^{p,q}$-convex cocompact groups}
\indent Consider the pseudo-Euclidean space $\R^{p,q+1}$, which is $\R^{p+q+1}$ equipped with the standard quadratic form of signature $(p,q+1)$. Define $\h^{p,q}$ as the open subset of $\R\proj^{p+q}$ consisting of negative lines in $\R^{p,q+1}$. Its boundary $\partial\h^{p,q}\subset \R\proj^{p+q}$ is the set of isotropic lines in $\R^{p,q+1}$.\\
\indent A pseudo-Riemannian metric of signature $(p,q)$ can be defined by identifying the  tangent space $T_{[x]}\h^{p,q}$ with the orthogonal $x^\perp$ in $\R^{p,q+1}$. This provides a pseudo-Riemannian metric of constant negative sectional curvature. The isometry group is $\PO(p,q+1)$.\\
\indent Geodesics of $\h^{p,q}$ are intersections of $\h^{p,q}$ with projectivizations of $2$-planes in $\R^{p,q+1}$. They are very different depending on the signature of the $2$-plane. We will mostly work with spacelike geodesics, which correspond to planes of signature $(1,1)$.\\
\indent A subset $\Omega\subset\h^{p,q}$ is called properly convex if its closure in $\R\proj^{p,q}$ is convex (i.e. it is contained and convex in some affine chart). We follow the work of \cite{DGK} and say that a discrete group $\G\subset\PO(p,q+1)$ is $\h^{p,q}$-convex cocompact if it acts properly discontinuously and cocompactly on  a closed properly convex set $\Omega\subset \h^{p,q}$  with non empty interior such that the intersection of the closure $\overline\Omega\subset\R\proj^{p+q}$  with $\partial\h^{p,q}$ does not contain any non trivial line segment. \\

\indent There are many examples.
\begin{itemize}
\item When $q=0$, $\h^{p,0}$ is the Klein model of $\h^p$. The boundary $\partial \h^{p,0}$ contains no non trivial line segments, so $\h^{p,0}$-convex cocompactness is equivalent to the usual notion of convex cocompactness.
\item When $q=1$, $\h^{p,1}$ is a Lorentzian manifold called anti-de Sitter space, usually denoted by $\ADS^{p+1}$. Some of the most studied  $\ADS$-manifolds are called globally hyperbolic $\ADS$-spacetimes \cite{mess2007lorentz,barbot2007constant,barbot2008causal,merigot2012anosov} . One of the key points in the pioneering work of Mess \cite{mess2007lorentz} was the fact that their holonomies are $\h^{p,1}$-convex cocompact. 
\item When $m$ is even (resp. odd), Hitchin representations in $\PO(m+1,m)$  are $\h^{m,m}$-convex cocompact (resp.$\h^{m+1,m-1}$-convex cocompact) \cite{DGK}.
\item The maximal representations of surface groups in $\PO(2,q+1)$ studied by B. Collier, N. Tholozan and J. Toulisse   in \cite{CTT} are $\h^{2,q}$-convex cocompact (this is a consequence of the work in \cite{BILW} and \cite{DGK2}).  
\end{itemize}

If $\G\subset\PO(p,q+1)$ is $\h^{p,q}$-convex cocompact, we define its limit set $\Lambda_\G$ as the intersection with $\partial\h^{p,q}$ of the closure of the orbit of any point in a convex set $\Omega$ introduced in the definition of $\h^{p,q}$-convex cocompactness. It does not depend on a choice of  such a  convex set. \\
\indent There is no canonical choice of this convex set $\Omega$, however they all contain the convex hull $C(\Lambda_\G)$ of $\Lambda_\G$ (but it can have empty interior).\\

It is shown in \cite{DGK2} that $\h^{p,q}$-convex cocompact groups are Anosov. The most important consequence is that if $\G\subset\PO(p,q+1)$ is $\h^{p,q}$-convex cocompact, then $\G$ is Gromov-hyperbolic and the action on $\Lambda_\G$ is conjugate to the action on the Gromov boundary $\partial_\infty\G$.

\subsection{Critical exponent in $\h^{p,q}$}
\indent We recall the classic definition of the critical exponent in metric spaces. Let $G$ be a countable group acting on a metric space $(X,d)$, and $o\in X$. The \textbf{critical exponent } $\delta(G, X)$ is 
$$\delta(G, X) := \limsup_{R\tv \infty} \frac{1}{R} \log \Card \{ g\in G \, |\, d(g o , o )\leq R\}.$$
\indent A simple computation based on the triangle inequality shows that this number does not depend on  $o\in X$. It measures the exponential growth rate of the orbits of $G$ in $X$.\\
 \indent For example, by a simple argument of volume, we can see that the critical exponent of a uniform lattice of $\PO(p,1)$ acting on $\Hyp^{p}$ is equal to $p-1$.  More generally this applies to fundamental group of compact Riemannian manifolds of negative curvature, where the critical exponent is equal to the exponential growth rate of the volume of  balls. For a more thorough treatment we refer to the text of M. Peigné \cite{peigne2013autour} and F. Paulin \cite{paulin1997critical} \\
\indent A famous theorem of R. Bowen \cite{bowen1979hausdorff} in dimension $3$ and Yue \cite{Yue} in higher dimension shows that the critical exponent of a quasi-Fuchsian group in $\PO(p+1,1)$ is greater than $n-1$ with equality if and only if the group is Fuchsian, that is conjugate to a subgroup of $\mathrm{O}(p,1)$. \\

 The main problem when it comes  to defining this invariant in pseudo-Riemannian hyperbolic geometry is that  $\h^{p,q}$ is not a metric space: if $q>0$, there are no $\PO(p,q+1) $ invariant distances on $\h^{p,q}$. \\
\indent The starting point of our work is the search for a  good replacement for the distance on the convex hull $C(\Lambda_\G)$ of a $\h^{p,q}$-convex cocompact group $\Gamma\subset\PO(p,q+1)$, which will lead to an $\h^{p,q}$ critical exponent. This will be done in Section \ref{sec - triangle inequality}. \\
\indent Its definition is simple: if two points are on the same spacelike geodesic then their $\h^{p,q}$-distance  is defined to be the length on this geodesic (since it is spacelike, the induced metric on this geodesic is Riemannian), in other configurations their $\h^{p,q}$-distance is defined to be $0$.  \\
\indent We call $d_{\h^{p,q}}(\cdot, \cdot)$ this function on $C(\Lambda_\G) \times C(\Lambda_\G)$. \\
\indent This function is not a distance, and the first part of our work consists in finding a weak form of the triangle inequality when looking at the convex hull $C(\Lambda_\G)$. 

\begin{theorem} \label{th - intro inegalite triangulaire} If $\G\subset \PO(p,q+1)$ is $\h^{p,q}$-convex cocompact, there is a constant $k_\G>0$ such that $d_{\h^{p,q}}(x,y)\le d_{\h^{p,q}}(x,z)+d_{\h^{p,q}}(z,y)+k_\G$ for all $x,y,z\in C(\Lambda_\G)$.
\end{theorem}

 This allow us to define the \textbf{critical exponent for $\h^{p,q}$-convex cocompact groups} by 
$$\delta_{\h^{p,q}}(\G):= \limsup_{R\tv \infty} \frac{1}{R} \log \Card \{ \g\in \G \, |\, d_{\h^{p,q}}(\g o , o )\leq R\}.$$
Thanks to \ref{th - intro inegalite triangulaire}, it does not depend on the choice of a point $o\in C(\Lambda_\G)$.

Let us mention that F. Kassel and T. Kobayashi \cite{Kassel2016} studied critical exponents and the associated Poincaré series for Clifford-Klein forms, including closed anti-de Sitter manifolds.

\subsection{Pseudo-Riemannian Hausdorff dimension}

The other invariant  that we want to generalize is the Hausdorff dimension of the limit set. In the hyperbolic case, it is known since the work of S. J. Patterson and D. Sullivan \cite{patterson1976limit,sullivan1979density} that the Hausdorff dimension of the limit set of a quasi-Fuchsian group  is equal to the critical exponent. It provides a link between the action of the group inside the hyperbolic space and the fractal geometry of the limit set. \\
\indent The Hausdorff dimension is not the right invariant  in our case: if $q>0$, we will see that the limit set $\Lambda_\G$ of any $\h^{p,q}$-quasi Fuchsian group   has  Hausdorff dimension $p-1$. Using the pseudo-Riemannian geometry of the boundary we define a new notion called the \textbf{pseudo-Riemannian Hausdorff dimension}, that we will denote  by $\Hdim_{p,q}$ ($\Hdim_{p,0}$ being the usual notion of Hausdorff dimension). Roughly speaking, this dimension measures the number of pseudo-Riemannian balls (which are the interiors of quadrics) necessary to cover a set, in the same way that the classical Hausdorff dimension measures the number of metric balls necessary to cover the set.\\
\indent We obtain our first main result 
\begin{theorem}\label{th - Intro exposnt critique = Hdim}
If $\G\subset \PO(p,q+1)$ is $\h^{p,q}$-convex cocompact, then 
$$\delta_{\h^{p,q}}(\G)  = \Hdim_{p,q} (\Lambda_\G)\leq \Hdim(\Lambda_\G)\leq p-1.$$
\end{theorem}

\subsection{Patterson-Sullivan densities}
The main tool in the proof of Theorem \ref{th - Intro exposnt critique = Hdim} is the construction of a conformal density, which is a family of measures supported on the limit set indexed by points of the convex hull.\\
\indent The definition of conformal densities relies on the notion of Buseman functions, defined by $\beta_\xi(x,y)=\lim_{p\to \xi}d_{\h^{p,q}}(x,p)-d_{\h^{p,q}}(y,p)$ where $x,y\in\h^{p,q}$ and $\xi \in \partial \h^{p,q}$. Buseman functions will be studied in Subsection \ref{subsec - limit set}.

\begin{defi}
A conformal density of dimension $s\in\R$ is a family of measures $(\nu_x)_{x\in C(\Lambda_\G)}$  satisfying the following conditions:
\begin{enumerate}
\item $\forall \g\in \G$, $\g^*\nu_x = \nu_{\g x}$ (where $  \g^*\nu (E) = \nu(\g^{-1} E))$
\item $\forall x,y\in C(\Lambda_\G)$, $\frac{d \nu_x}{d\nu_{y}} (\xi) = e^{-s \beta_\xi (x,y)} $
\item $\supp(\nu_x)=\Lambda_\G$
\end{enumerate}
\end{defi}

An adaptation of the classical construction due to S.J. Patterson and D. Sullivan \cite{patterson1976limit,sullivan1979density}  in the hyperbolic case provides a conformal density of dimension $\delta_{\h^{p,q}}(\G)$. For a nice introduction of this theory we refer to the lecture notes of J.-F. Quint \cite{quint2006overview}. \\
\indent One of the important steps in order to identify $\delta_{\h^{p,q}}(\G)$ with the pseudo-Riemannian Hausdorff dimension of $\Lambda_\G$  consists in showing that the measure of a pseudo-Riemannian ball of radius $r$ in the boundary  $\partial\h^{p,q}$ behaves like $r^{\delta_{\h^{p,q}}}$ as $r\tv 0$. This is a consequence of a result known as the Shadow Lemma  (Theorem \ref{shadow_lemma}), proved in the hyperbolic case by Sullivan \cite{sullivan1979density}.
\begin{theorem} \label{th - Intro measure_balls}
Let $\mu$ be the Patterson-Sullivan density, and let $x\in C(\Lambda_\G)$. There is $c>0$ such that for all $\xi\in \Lambda_\G$, $r\in (0,1)$, we have: \[\frac{\mu_x(B_x(\xi,r))}{r^{\delta_\Gamma}} \in \left[\frac{1}{c},c\right],\]
where $B_x(\xi,r)$ is the pseudo-Riemannian ball on the boundary. 
\end{theorem}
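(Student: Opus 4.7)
The plan is to adapt Sullivan's classical shadow-lemma argument to the $\AdS$ setting. For $x\in C(\Lambda)$, $y\in C(\Lambda)$ lying on a common spacelike geodesic with $x$, and $R>0$, introduce the shadow
\[ O_x(y,R):=\{\xi\in\Lambda \,|\, \text{the spacelike geodesic from } x \text{ to } \xi \text{ meets the Lorentzian ball of radius } R \text{ around } y\}. \]
The theorem will follow from two steps: first, a Sullivan-type estimate
\[ \nu_x\bigl(O_x(\g x, R)\bigr) \asymp e^{-\delta_\G \, d_\AdS(x,\g x)} \]
uniformly in $\g\in\G$ for $R$ large enough; second, a geometric comparison showing that for suitable $R$, each Lorentzian ball $B_x(\xi,r)$ is sandwiched between two shadows $O_x(\g x, R)$ with $d_\AdS(x,\g x)\sim \log(1/r)$.

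For the upper bound on the shadow measure, I would use the conformal density property to write
\[ \nu_x\bigl(O_x(\g x,R)\bigr) = \int_{\g^{-1}O_x(\g x,R)} e^{-\delta_\G \beta_\xi(\g x,x)}\, d\nu_x(\xi). \]
The key geometric input is that for $\xi\in O_x(\g x,R)$ one has $\beta_\xi(\g x,x)=-d_\AdS(x,\g x)+O_R(1)$, which should follow from standard Busemann estimates along spacelike geodesics (an $\AdS$ analogue of Morse-type control, the spacelike geodesics in $C(\Lambda)$ behaving sufficiently rigidly thanks to the acausality of $\Lambda$). Since $\g^{-1}O_x(\g x,R)=O_{\g^{-1}x}(x,R)\subset\Lambda$ has mass bounded by $\nu_x(\Lambda)<\infty$, the upper bound follows. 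For the lower bound I would invoke cocompactness of $\G$ on $C(\Lambda)$: the orbit map sends a compact fundamental domain onto $C(\Lambda)$, and since $\supp(\nu_x)=\Lambda$ with $\Lambda$ compact, the infimum of $\nu_z(O_z(z',R))$ over pairs $(z,z')$ in a fixed compact set is strictly positive. Applying the conformal density formula in reverse transfers this uniform positive lower bound to each $\nu_x(O_x(\g x,R))$.

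The shadow/ball comparison is the second step. For $R$ fixed large enough, I would show that the Lorentzian diameter of $O_x(y,R)$ inside $\partial\AdS^{n+1}$ is comparable to $e^{-d_\AdS(x,y)}$, so that shadows of depth $\log(1/r)$ correspond to balls of radius $r$. Given $\xi\in\Lambda$ and $r\in(0,1)$, pick the point $y$ on the spacelike geodesic from $x$ to $\xi$ with $d_\AdS(x,y)=\log(1/r)$; cocompactness provides $\g\in\G$ with $d_\AdS(y,\g x)$ bounded independently of $\xi,r$, and then $O_x(\g x,R)$ is sandwiched between Lorentzian balls of radii comparable to $r$. Combined with the first step, this yields $\nu_x(B_x(\xi,r))\asymp e^{-\delta_\G d_\AdS(x,\g x)}\asymp r^{\delta_\G}$.

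The principal technical obstacle will be the geometric comparison between shadows and Lorentzian balls. In the hyperbolic case the CAT($-1$) inequality gives exponential divergence of geodesics and clean estimates; here the ambient space is not a metric space, $d_\AdS$ is only a partial function, and the de Sitter structure on $\partial\AdS^{n+1}$ has mixed signature. Making precise the asymptotic behavior of families of spacelike geodesics issuing from $x$ and converging to $\Lambda$, and relating it to the Lorentzian ball functional used to define $\Hdim_\dS$, is where the main work of the section should go; I expect it to rest on an $\AdS$ Busemann-function calculus established earlier and on the Lipschitz-graph description of acausal spheres from \cite{merigot2012anosov}.
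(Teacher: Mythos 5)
Your proposal follows the same overall architecture as the paper's: a Sullivan shadow lemma, its extension to arbitrary centres via cocompactness, and a two-sided sandwich of Lorentzian balls between shadows. But there is a genuine gap in your lower bound for shadow measures. You reduce to showing that $\inf\nu_z(O_z(z',R))>0$ over pairs $(z,z')$ in a fixed compact set, and then claim the conformal density formula ``transfers this'' to all shadows $O_x(\gamma x,R)$. This does not work: applying $\gamma^{-1}$ to $O_x(\gamma x,R)$ gives $O_{\gamma^{-1}x}(x,R)$, so the centre of the target ball comes back to $x$, but the apex $\gamma^{-1}x$ of the shadow escapes to infinity with $\gamma$. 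One can never bring both points of the pair into a compact set simultaneously. The uniform bound $\nu_x(\mathcal S_R(\gamma^{-1}x,x))\geq\e$ is precisely the subtle part of the shadow lemma; the paper gets it from Lemma \ref{shadow_2} (for $R$ large, a shadow cast from a distant apex onto $B(x,R)$ misses at most a small Gromov ball) combined with Lemma \ref{compactness_argument} (the $\nu_x$-mass of the complement of any small Gromov ball is bounded below), which in turn rests on the absence of atoms (Proposition \ref{prop - pas d'atomes}), not merely $\supp\nu_x=\Lambda$. Your argument supplies none of these, so the lower half of the estimate is not established.

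On the ball/shadow comparison, which you expected to be the main technical hurdle: in the paper this reduces to an explicit hyperbolic-plane computation (Lemma \ref{computation_distance_boundary} and Corollaries \ref{ball_subset_shadow}, \ref{shadow_subset_ball}), because $x$ together with points on $[x\xi)$ and $[x\eta)$ lie in a single totally geodesic $\h^2\subset\AdS^{n+1}$. The mixed signature of $\partial\AdS^{n+1}$ never enters once one works with the Gromov product $d_x$, and the Lipschitz-graph structure of acausal spheres is not needed at this stage.
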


Thanks to the Shadow Lemma we also prove that  Patterson-Sullivan measures are ergodic. From the Patterson-Sullivan density we also construct  a measure on the non-wandering set of the unit tangent for the geodesic flow a finite, invariant, ergodic measure: the Bowen-Margulis measure. 

\subsection{Rigidity in $\h^{2,1}$}
The second main result of this paper is the rigidity result that we obtain in $\h^{2,1}=\ADS^3$. \\
A group $\G\subset \PO(p,q+1)$ is called $\h^{p,q}$-quasi Fuchsian if it is $\h^{p,q}$-convex cocompact and $\Lambda_\G$ is homeomorphic to $\Ss^{p-1}$. It is called Fuchsian if it is conjugate to a uniform lattice in $\mathrm{O}(p,1)\subset\PO(p,q+1)$.

\begin{theorem}\label{th - Intro rigidity }
Let $\G\subset \PO(2,2)$ be a $\h^{2,1}$-quasi Fuchsian group. Then
$$\delta_{\h^{2,1}} (\G) \leq 1,$$
with equality if and only if $\G$ is Fuchsian.
\end{theorem}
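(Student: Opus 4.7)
The inequality $\delta_\AdS(\Gamma)\leq 1$ is immediate from Theorem~\ref{th - Intro exposnt critique = Hdim} together with the standard upper bound of $\Hdim_\dS(\Lambda_\Gamma)$ by the topological dimension, which equals $1$ since $\Lambda_\Gamma$ is a topological circle. The easy half of the equality case is also quick: if $\Gamma$ preserves a totally geodesic $\Hyp^2\subset\AdS^3$, then the $\AdS$ spacelike distance restricted to this $\Hyp^2$ coincides with its intrinsic hyperbolic distance (geodesics of the induced hyperbolic metric are precisely spacelike geodesics of $\AdS^3$ tangent to the totally geodesic copy), so $\delta_\AdS(\Gamma)$ equals the ordinary critical exponent of a cocompact Fuchsian surface group, which is $1$.

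For the rigidity direction, I would use Mess's identification: through the isomorphism $\SO_0(2,2)\simeq(\PSL(2,\R)\times\PSL(2,\R))/\{\pm I\}$, a quasi-Fuchsian $\Gamma$ corresponds to a pair $(\rho_L,\rho_R)$ of cocompact Fuchsian representations of a closed surface group $\pi_1(\Sigma)$. Under this identification, $\partial\AdS^3$ is essentially $\partial\Hyp^2_L\times\partial\Hyp^2_R$ foliated by two transverse families of null circles (one for each factor), and the limit set $\Lambda_\Gamma$ is the graph of an equivariant Hölder homeomorphism $f:\partial\Hyp^2_L\to\partial\Hyp^2_R$ intertwining $\rho_L$ and $\rho_R$. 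A totally geodesic $\Hyp^2\subset\AdS^3$ is preserved if and only if $\rho_L$ and $\rho_R$ are conjugate in $\PSL(2,\R)$, equivalently if $f$ is the restriction of a Möbius transformation; so the goal becomes: $\delta_\AdS(\Gamma)=1 \Rightarrow f$ is Möbius.

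The central step is to use the Patterson-Sullivan density $\nu$ of dimension $\delta_\AdS(\Gamma)=1$ constructed earlier. The pushforward $(\pi_L)_*\nu$ along the null foliation $\{\xi_R=\mathrm{const}\}$ is a $\rho_L$-quasi-invariant measure on $\partial\Hyp^2_L$. Using the Shadow Lemma (Theorem~\ref{th - Intro measure_balls}) together with a comparison of the $\AdS$ Busemann cocycle with the hyperbolic Busemann cocycle of $\rho_L$ along this projection, $(\pi_L)_*\nu$ would be identified as a Patterson-Sullivan density of dimension $1$ for $\rho_L$. Uniqueness of such a density for a cocompact Fuchsian group (it lies in the Lebesgue class) then forces $(\pi_L)_*\nu\in[\mathrm{Leb}]$, and symmetrically $(\pi_R)_*\nu\in[\mathrm{Leb}]$. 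Since $f$ conjugates these two pushforwards, $f$ must send the Lebesgue class to itself, hence is absolutely continuous. A classical Mostow-Sullivan rigidity on surfaces now applies: an equivariant absolutely continuous homeomorphism conjugating two cocompact Fuchsian representations on $\partial\Hyp^2$ must be Möbius, because the ergodicity of the geodesic flow of $\rho_L$ forces the Radon-Nikodym cocycle of $f$ to be essentially constant.

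The main obstacle I anticipate is this cocycle comparison along the null projection, i.e., the claim that $(\pi_L)_*\nu$ is really a Patterson-Sullivan density of dimension $1$ for $\rho_L$. It requires making Mess's identification quantitative at the level of Busemann functions, and it is exactly here that the borderline value $\delta_\AdS=1$ matches the critical exponent of each Fuchsian factor and collapses the freedom in $f$. Everything else, including the final Mostow-style step, is classical once absolute continuity of $f$ is in hand.
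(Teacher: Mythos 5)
The inequality and the easy direction are both fine: $\delta_\AdS\leq 1$ follows from Theorem~\ref{th - Intro exposnt critique = Hdim} together with $\Hdim_\dS(\Lambda)\leq\Hdim_h(\Lambda)=1$ (the limit set being a Lipschitz circle), and in the Fuchsian case $C(\Lambda)$ is a totally geodesic $\Hyp^2$ on which $d_\AdS$ restricts to the hyperbolic distance. The reduction of rigidity, via Mess, to showing that the equivariant boundary homeomorphism $f$ is Möbius is also the right target.

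The gap is exactly in the step you flag, and it is worse than a missing computation: the claim that $(\pi_L)_*\nu$ is a Patterson--Sullivan density of dimension $1$ for $\rho_L$ is not just unproved, it is essentially equivalent to the desired conclusion. Under Mess's isomorphism the $\AdS$ translation length decomposes as $\ell_\AdS(\gamma)=\tfrac12(\ell_L(\gamma)+\ell_R(\gamma))$, and the same identity holds at the cocycle level: for $\xi\in\Lambda=\graph(f)$, $\beta^\AdS_\xi(x,\gamma x)=\tfrac12\bigl(\beta^L_\xi(x_L,\gamma_L x_L)+\beta^R_{f(\xi)}(x_R,\gamma_R x_R)\bigr)$. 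Therefore the Radon--Nikodym cocycle of $(\pi_L)_*\nu$ along $\rho_L$ still carries the $\beta^R_{f(\xi)}$ term, so it is \emph{not} of the form $e^{-s\,\beta^L_\xi}$ unless $f$ already intertwines the two Busemann cocycles — which is precisely the Möbius condition you are trying to deduce. As written, the argument is circular. You would have to replace this step by something genuinely different (e.g.\ a direct comparison on the graph, where $d_x(\xi,\eta)\asymp\sqrt{|\xi-\eta|_L\cdot|f(\xi)-f(\eta)|_R}$, so $\Hdim_\dS(\Lambda)=1$ with finite positive measure forces an AM--GM equality that constrains $f$ — close to what Bishop--Steger actually do), and none of that is present.

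The paper's proof avoids this obstacle entirely and is of a different nature. It compares, à la Knieper, the volume entropy $h(\Sigma)$ of a $\Gamma$-invariant Cauchy surface $\tilde\Sigma\subset C(\Lambda)$ with the $\AdS$ critical exponent through a geodesic stretch $I_m(\Sigma)$ taken against the Bowen--Margulis measure: Theorem~\ref{th - inequality h<I delta} gives $h(\Sigma)\,I_m(\Sigma)\geq\delta(\Gamma)$, while Corollary~\ref{pr - I geq  1} shows $I_m(\Sigma)\leq 1$ because the Cauchy projection $f:C(\Lambda)\to\tilde\Sigma$ is quasi $1$-Lipschitz. Taking $\tilde\Sigma$ to be a boundary component of the convex core, which is isometric to $\Hyp^2$ and hence has $h=1$, gives the inequality. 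In the equality case the two Patterson--Sullivan densities ($\AdS$ and intrinsic) are shown to be equivalent, which forces the two Gromov boundary metrics to be Hölder equivalent and hence the marked length spectra of $\tilde\Sigma/\Gamma$ and of $M$ to be proportional; Bonahon's geodesic-current rigidity combined with $\ell_\AdS=\tfrac12(\ell_L+\ell_R)$ then yields $\rho_L=\rho_R$ (Corollary~\ref{cor - Bowen GHMC}). So the paper's route and yours share only the Mess identification at the very end; the engine is length-spectrum rigidity rather than absolute continuity of the boundary map.
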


It is the Lorentzian analogue of R. Bowen's Theorem \cite{bowen1979hausdorff}. The proof of the inequality mimics a classic method using a comparison of the volume of large balls, that one can found in \cite{knieper1995volume,glorieux2015entropy}. The main argument is a  comparison of two distances on the boundary of the convex core. These two distances are the intrinsic and extrinsic distances coming from the $\AdS$ distance. We will see that the usual inequality between intrinsic and extrinsic distances in Riemannian geometry is reversed in the Lorentzian context.

The characterization of the equality case consists in comparing  the two Patterson-Sullivan measures coming from the intrinsic and extrinsic distances. We show that if there is equality between the critical exponent of the intrinsic and extrinsic distances then the measures should be equivalent. By a classical remark using Bowen-Margulis measures, we show that the marked length spectra of the boundary of the convex core and of the ambient, quasi-Fuchsian $\ADS^3$-manifold are equal. This ends the proof thanks to a easy argument of  two-dimensional hyperbolic geometry.

\emph{Remark:} The Mess parametrization \cite{mess2007lorentz} associates a pair of Fuchsian representations in $\PSL(2,\R)$ to every quasi-Fuchsian group in $\PO(2,2)$.  Using this parametrization, the work in \cite{glorieux2015asymptotic}, and the right notion of entropy, one can show that $\delta_{\h^{2,1}}(\G)$ is equal to a number associated with the action of a surface group  on $\Hyp^2\times \Hyp^2$  in  \cite{bishop1991three} by Bishop and Steger, where they prove the same rigid inequality (another proof can be found in \cite{potrie2014}). The proof we propose is totally independent, and we actually prove a stronger result (Theorem \ref{th - inequality h<I delta}  ).

\subsection{Questions}

Theorem \ref{th - Intro exposnt critique = Hdim} gives an inequality between the pseudo-Riemannian Hausdorff dimension and the usual Hausdorff dimension. A natural question is to understand when the equality is achieved. It is the case when $\G$ is conjugate to a convex cocompact subgroup of $\mathrm O(p,1)$, and more generally when $\G$ preserves a totally geodesic copy of $\h^p$.

\begin{conj} Let $\G\subset \PO(p,q+1)$ be $\h^{p,q}$-convex cocompact. If $\delta_{\h^{p,q}}(\G)= \Hdim(\Lambda)$, is  $\G$ conjugate to a subgroup of $\mathrm{P}(\mathrm O(p,1)\times \mathrm O(q))$ ?
\end{conj}

Note that even in the quasi-Fuchsian case, i.e. when $\Lambda_\G$ is homeomorphic to $\Ss^{p-1}$, is far from obvious. In this case, an intermediate situation is the case where $\Lambda_\G$ is smooth.

\begin{conj} Let $\G\subset \PO(p,q+1)$ be $\h^{p,q}$-quasi Fuchsian. If $\Lambda_\G$ is a $C^1$-submanifold of $\partial\h^{p,q}$, is  $\G$ Fuchsian ?
\end{conj}

  We expect that these two question have positive answers. This will be the subject of  following work.\\

Another interesting direction is the world of Anosov representations.  The entropy of an Anosov representation can be seen as a generalization of the critical exponent. In higher rank Lie groups, there are several notions of entropy for these representations (one of them being the critical exponent for the action on the Riemannian symmetric space), some of them leading to inequalities with rigidity in the equality case \cite{potrie2014}. It would be interesting to find other settings in which  the entropy coincides with a generalization of the Hausdorff dimension of the limit curve, thus giving a geometric interpretation of a dynamical invariant.\\


\subsection{Plan of the paper}
We start by covering the necessary background in pseudo-Riemannian hyperbolic geometry. We emphasize on the differences and similarities with the usual Riemannian case. 

It is followed by  a collection of geometric facts about asymptotic geometry for the distance $d_{\h^{p,q}}$. We review the usual concepts of shadows,  radial convergence, Buseman functions, and Gromov distances in our setting. 

Section \ref{sec - conformal density} is devoted to the  study of conformal densities. For the existence and ergodicity of conformal densities, which are very close to classical results in hyperbolic geometry, we refer to the appendix.  

In Section \ref{sec - Lorentzian hausdorff dim}, we define the pseudo Riemannian Hausdorff dimension and prove Theorem \ref{th - Intro exposnt critique = Hdim}. 

Finally, the last section is devoted to the proof of Theorem \ref{th - Intro rigidity }.

\subsection*{Acknowledgements} We would like to thank Jean-Marc Schlenker and Nicolas Tholozan for many encouraging and enlightening conversations on this topic. We  also thank the anonymous referees for their many valuable comments.

\section{Pseudo-Riemannian hyperbolic geometry} \label{sec - AdS geometry}
In this section we introduce the pseudo-Riemannian hyperbolic space $\h^{p,q}$ and {\color{black}go over  its basic} properties. A nice introduction to anti-de Sitter geometry can be found in \cite{barbot2007constant} for $\ADS^3$, and in \cite{merigot2012anosov} for arbitrary dimension.
\subsection{Models for $\h^{p,q}$ and its boundary}
{\color{black}\indent Define $\R^{p,q+1}$ as the space $\R^{p+q+1}$ endowed with the symmetric bilinear form \[\langle u\vert v\rangle_{p,q+1}=u_1v_1+\cdots+u_pv_p-u_{p+1}v_{p+1}-\cdots-u_{p+q+1}v_{p+q+1}.\] 
\indent We will consider the Klein model of the pseudo-Riemannian hyperbolic space \[ \h^{p,q}=\{ [u]\in \R\proj^{p+q} \vert \langle u\vert u\rangle_{p+q+1}<0\}.\] 
\indent There is a natural pseudo-Riemannian metric of signature $(p,q)$ defined on $\h^{p,q}$ defined by identifying $T_{[u]}\h^{p,q}$ with $u^{\perp}\subset \R^{p,q+1}$, and considering the restriction of $\langle\cdot\vert\cdot\rangle_{p,q+1}$ to $u^{\perp}$ (where orthogonals are considered with respect to $\langle\cdot\vert\cdot\rangle_{p,q+1}$).\\
\indent There is also a linear model for the pseudo-Riemannian hyperbolic space
\[\mathcal H^{p,q} = \{ u\in \R^{p,q+1} \vert \langle u\vert u\rangle_{p,q+1}=-1\}.\]
The linear model $\mathcal H^{p,q}$ is a double cover of the projective model $\h^{p,q}$. Note that $\ADS^{n+1}=\h^{n,1}$ and $\h^{n,0}=\h^n$.  }\\

\indent {\color{black}The boundary $\partial \h^{p,q}$ is simply defined as the boundary in $\R\proj^{p+q}$, which is \[ \partial \h^{p,q}=\{[u]\in \R\proj^{p+q} \vert \langle u\vert u\rangle_{p,q+1}=0\}.\] }

\paragraph{The $\h^{2,1}$ case}
When $(p,q)=(2,1)$ there is another convenient model, as {\color{black}$\h^{2,1}=\ADS^3$ is isometric to $\PSL(2,\R)$ endowed with a bi-invariant metric (a multiple of its Killing form).} Indeed we can see $\R^{2,2}$ as $M(2,\R)$  endowed by the quadratic form $q=-\det$. Then we can see $\SL(2,\R)$ as the level $\{q=-1\}$ endowed with the restriction of $q$ to tangent spaces. {\color{black} The following map induces an isometry between $\ADS^3$ and $\PSL(2,\R)$:} 
$$\fonction{}{\R^{2,2}}{M(2,\R)}{(u_1,u_2,u_3,u_4)}{\left(\begin{array}{cc}
      u_1-u_3 &  -u_2+u_4 \\
      u_2+u_4 & u_1+u_3
   \end{array}
   \right).}$$

{\color{black}\paragraph{Notations and inner product for points of $\overline{\h^{p,q}}$}   
   Given a point $x\in \h^{p,q}\subset \R\proj^{p+q}$, which is a line in $\R^{p,q+1}$, we will add a tilde to denote a lift $\tx\in x\subset\R^{p+q+1}$ satisfying $\pscalb{\tx}{\tx}=-1$, provided the expression in which it is used does not depend on such a lift (as there are two choices).\\
   \indent With this convention, for $x,y\in \h^{p,q}$, we can define the number $\vert\pscalb{\tx}{\ty}\vert$ (but not $\pscalb{\tx}{\ty}$).\\
  \indent Similarly, given $\xi\in \partial \h^{p,q}$ we will add a tilde to denote a choice of a lift $\txi\in\xi\subset\R^{p+q+1}$, provided once again that the expression in which it is used does not depend on such a lift.\\
  \indent For example, given $(x,\xi)\in\h^{p,q}\times\partial\h^{p,q}$, the expression $\pscalb{\tx}{\txi}\neq 0$ is well defined, even though the number $\pscalb{\tx}{\txi}$ is not.\\
      
\subsection{Isometries of $\h^{p,q}$}
The  group of orientation preserving isometries of  $\h^{p,q}$ is the group $\mathrm{PO}(p,q+1)$ of projective transformations of $\R\proj^{p+q}$ whose lifts to $\R^{p+q+1}$ preserve the {\color{black} bilinear form $\langle\cdot\vert\cdot\rangle_{p,q+1}$.}  It acts transitively on $\h^{p,q}$.\\
\indent The stabilizer of a point $x\in\h^{p,q}$ in $\PO(p,q+1)$ is isomorphic to $\mathrm O(p,q)$. For {\color{black}$x_0=[0:\cdots:0:1]$}, the associated inclusion $\mathrm O(p,q)\subset\PO(p,q+1)$ corresponds to the standard inclusion by block-diagonal matrices, so $\h^{p,q}$ can be seen as the homogeneous space $\PO(p,q+1)/\mathrm O(p,q)$.\\

\subsection{Geodesics}

{\color{black}
\begin{defi}
Geodesics of $\h^{p,q}$ are intersections of $\h^{p,q}$ with projectivizations $\proj(V)\subset \R\proj^{p+q}$ of $2$-dimensional planes $V\subset \R^{p+q+1}$ such that $\proj(V)\cap\h^{p,q}\neq\emptyset$. 
\end{defi}

The condition $\proj(V)\cap\ADS^{n+1}\neq\emptyset$ is equivalent to the fact that the restriction of $\pscalb{\cdot}{      \cdot}$ to $V$ has signature $(0,2)$, $(1,1)$ or $(0,1)$.\\}
This definition is equivalent to the classical notion of geodesics in pseudo-Riemannian geometry, however this is the characterization that will be the most useful for us.\\

\indent {\color{black}Given two distinct points $x,y\in \h^{p,q}$, there is a unique geodesic of $\h^{p,q}$ joining $x$ and $y$, which we will note $(xy)$ (it can be defined as $\proj(x\oplus y)\cap\h^{p,q}$).\\}
\indent As in any pseudo-Riemannian manifold, geodesics of $\h^{p,q}$ (for $q\geq 1$) are classified in three different types: spacelike geodesics (for which tangent vectors are positive for $\langle\cdot\vert\cdot\rangle_{p,q+1}$), timelike geodesics (negative tangent vectors) and lightlike geodesics (null tangent vectors).\\
\indent {\color{black} The type of the geodesic $(xy)$ depends only on the inner product. It is (recall that $\tx$ denotes a lift of $x$ to $\R^{p+q+1}$ satisfying $\pscalb{\tx}{\tx}=-1$, and $\ty$ is defined in a similar way)}:
\begin{itemize}
\item Spacelike if and only if $\vert\pscalb{\tx}{\ty}\vert>1$,
\item Lightlike if and only if $\vert\pscalb{\tx}{\ty}\vert=1$,
\item Timelike if and only if $\vert\pscalb{\tx}{\ty}\vert<1$.
\end{itemize}
{\color{black}These conditions can also be detected by looking at the signature of the restriction of $\pscalb{\cdot}{\cdot}$ to $\proj(x\oplus y)$, spacelike (resp. lightlike, timelike) being equivalent to having signature  $(1,1)$ (resp. $(0,1)$, $(0,2)$).}
 }

\paragraph{The $\h^{2,1}$ case}
In the $\PSL(2,\R)$ model, the geodesics passing through $\Id$ are precisely the  $1$-parameter subgroups. {\color{black} The classification of $1$-parameter subgroups of $\PSL(2,\R)$ into hyperbolic, parabolic and elliptic groups corresponds to the classification of Lorentzian geodesics into spacelike, lightlike and timelike geodesics.  
\begin{itemize}
\item Spacelike geodesics are conjugate to $  \left[
   \begin{array}{cc}
      e^{t}  &   0 \\
      0  & e^{-t}
   \end{array}
   \right]$
   
   \item Lightlike geodesics are conjugate to $  \left[
   \begin{array}{cc}
      1  &   x \\
      0  & 1
   \end{array}
   \right]$
   
   \item Timelike geodesics are conjugate to $  \left[
   \begin{array}{cc}
      \cos(\theta)  &   \sin(\theta) \\
      -\sin(\theta)  & \cos(\theta)
   \end{array}
   \right]$
\end{itemize}

All geodesics are left translates of geodesics passing through $\Id$. }

\paragraph{Geodesics and $\partial \h^{p,q}$.} {\color{black}Not all geodesics of $\h^{p,q}$ have endpoints on $\partial \h^{p,q}$,  which is a major difference with Riemannian hyperbolic geometry.}\\
However, the situation is nicer if we restrict ourselves to spacelike geodesics. Indeed, timelike geodesics are closed, so they never meet the boundary, and lightlike geodesics meet the boundary at exactly one point.\\

\begin{lemme} \label{lem - equations geodesiques} Given $x\in \h^{p,q}$ and $\xi\in\partial \h^{p,q}$, there is a unique geodesic $(x\xi)$ of $\h^{p,q}$  passing though $x$ with endpoint $\xi$. It is spacelike if and only if $\pscalb{\tx}{\txi}\neq 0$. In this case, it can be parametrized  as $f(s)$ where: 
\begin{align*}\tilde{f(s)}  &= \cosh(s) \tx -\sinh(s) \left( \frac{\txi}{\pscalb{\tx}{\txi} } +\tx\right) \\
&= e^{-s} \tx - \frac{\sinh s}{\pscalb{\tx}{\txi}}\txi.
\end{align*}
\end{lemme}
\begin{proof} Since $\proj(x\oplus\xi)$ is the only projective line containing $x$ and $\xi$, we have the existence and uniqueness of the geodesic. Since $\pscalb{\tx}{\tx}=-1$ and $\pscalb{\txi}{\txi}=0$, the signature of the restriction of $\pscalb{\cdot}{\cdot}$ to $x\oplus\xi$ is $(1,1)$ if and only if  $\pscalb{\tx}{\txi}\neq 0$.\\
Since the formula for $f(s)$ is a unique speed parametrization of $\proj(x\oplus\xi)\cap\ADS^{n+1}$, it is a parametrization of the geodesic $(x\xi)$.
\end{proof}

\indent We will denote by $[x\xi)$ the half geodesic going from $x$ to $\xi$.\\

{\color{black} \indent Pairs of points in $\partial\h^{p,q}$ do not always define a geodesic of $\h^{p,q}$. Given $\xi,\eta\in\partial\h^{p,q}$, there is a spacelike geodesic of $\h^{p,q}$ with endpoints $\xi$ and $\eta$ if and only if $\pscalb{\txi}{\teta}\neq 0$. \\ }

\subsection{The geometry of $\partial \h^{p,q}$}  \label{subsec - geometry of the boundary}

\begin{defi}
Let $x\in \ADS^{n+1}$. Its dual hyperplane is the set 
$$x^* := \{ y \in  \label{subsec - geometry of the boundary} \, | \, \pscalb{\tx}{\ty} =0 \}.$$
\end{defi}


The dual hyperplane $x^*$ is a totally geodesic embedded copy of $\h^{p,q-1}$ in $\h^{p,q}$. Conversely, any totally geodesic embedded copy of $\h^{p,q-1}$ in $\h^{p,q}$ is equal to $x^*$ for a unique point $x\in\h^{p,q}$.\\
Note that if $q=0$, then $x^*$ is empty.\\

\begin{defi} \label{definition - domaine de Sitter}
Let $x\in\h^{p,q}$. The affine domain associated to $x$ is 
\[U(x)=\{y\in\h^{p,q}\,\vert\,\pscalb{\tx}{\ty}\neq0\} = \h^{p,q}\setminus x^*.\]
The pseudo-spherical domain associated to $x$ is 
\[\partial U(x)=\{\xi\in\partial\h^{p,q}\,\vert\, \pscalb{\tx}{\txi}\neq 0\}=\partial\h^{p,q}\setminus \partial x^*.\]
\end{defi}

In order to understand why $U(x)$ is called an affine domain, consider $x_0=[0:\cdots:0:1]\in\h^{p,q}$.\\
\indent The affine domain $U(x_0)$ consists of points $[u_1:\cdots:u_{p+q+1}]\in\h^{p,q}$ such that $u_{p+q+1}\neq 0$. The affine chart $[u_1:\cdots:u_{p+q+1}]\mapsto (\frac{u_1}{u_{p+q+1}},\dots,\frac{u_{p+q}}{u_{p+q+1}})$ maps $U(x_0)$ to an open set $V$ of $\R^{p+q}$, and sends geodesics to affine lines in $\R^{p+q}$.\\
\indent More precisely, if we denote by $q_{p,q}$ the standard quadratic form of signature $(p,q)$ on $\R^{p+q}$, then $V=q_{p,q}^{-1}((-\infty,1))$ is the interior of the quadric  $Q=q_{p,q}^{-1}(\{1\})$, which is the image of $\partial U(x_0)$ through the same map.\\
If $q=0$, then $Q$ is a sphere, and we recover the Klein model of hyperbolic space. If $q=1$, then $Q$ is a one sheeted hyperboloid.\\

%

\indent Note that a similar description of $U(x)$ and $\partial U(x)$ is valid for any point $x\in\h^{p,q}$ (because the isometry group $\PO(p,q+1)$ acts transitively on $\h^{p,q}$).

\begin{figure} 
  \centering
 \includegraphics[scale=0.5]{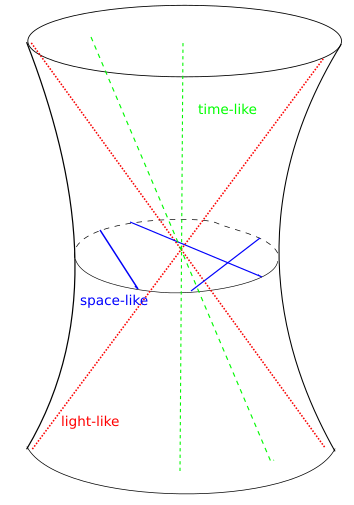}
   \caption{ {\color{black}Geodesics of $\h^{2,1}$ in an affine domain.}}
   \label{fig - geodesics}
\end{figure}

%
%
%
%


Given $x\in \h^{p,q}$, {\color{black} we will equip} the pseudo-spherical domain $\partial U(x) =\{\xi\in\partial\h^{p,q}\,\vert\, \pscal{x}{\xi}\neq 0\}$ with a pseudo-Riemannian metric $g_x$ of signature $(p-1,q)$ such that $(\partial U(x),g_x)$ is  isometric to the pseudo-Riemannian sphere  $\Ss^{p-1,q}$.\\

If $\xi\in \partial \h^{p,q}$, the tangent space $T_\xi\partial\h^{p,q}$ is naturally identified with the quotient space $\xi^\perp/\xi$. If $\xi\in \partial U(x)$, then $\xi^\perp=(\xi^\perp\cap x^\perp)\oplus \xi$, so $\xi^\perp/\xi$ can be identified with $\xi^\perp\cap x^\perp$. The restriction of $\pscalb{\cdot}{\cdot}$ to $\xi^\perp\cap x^\perp$ has signature $(p-1,q)$, so it defines a pseudo-Riemannian metric $g_x$ of signature $(p-1,q)$ on $\partial U(x)$.\\

\indent The pseudo-Riemannian sphere is the standard pseudo-Riemannian manifold of constant positive curvature $1$. To define it, consider the quadratic form $\pscal{\cdot}{\cdot}_{p,q}$ of signature $(p,q)$ on $\R^{p+q}$, the pseudo-Riemannian sphere $\Ss^{p-1,q}$ is the level $\{ v\in \R^{p+q} \vert \pscal{v}{v}_{p,q}=1\}$ equipped with the restriction of $\pscal{\cdot}{\cdot}_{p,q}$ to its tangent spaces. Its isometry group is $\mathrm O(p,q)$.\\

\indent {\color{black}Considering the point $x_0=[0:\cdots:0:1]\in\h^{p,q}$, we find a diffeomorphism $\partial U(x_0)\to \Ss^{p,q}$ by sending  $[\xi_1:\cdots:\xi_{p+q+1}]$ to $(\frac{\xi_1}{\xi_{p+q+1}},\dots,\frac{\xi_{p+q}}{\xi_{p+q+1}})$. This is an isometry from $(\partial U(x_0),g_{x_0})$ to  $\Ss^{p-1,q}$.\\}

\indent Given any other point $x\in\h^{p,q}$, consider $\gamma\in\PO(p,q+1)$ such that $\gamma.x=x_0$. We then have $\gamma.\partial U(x)=\partial U(x_0)$, and  $g_x=\gamma^*g_{x_0}$, so $(\partial U(x),g_x)$ is also isometric to $\Ss^{p-1,q}$.\\
\indent Note that the isometry group of $(\partial U(x),g_x)$ is exactly  $\Stab(x)$.\\

\indent It is not possible to find a pseudo-Riemannian metric defined on the whole boundary $\partial \h^{p,q}$ which is invariant under $\PO(p,q+1)$ (because there are no preserved volume forms). However, there is an invariant pseudo-Riemannian conformal structure. Indeed, the metrics defined on the pseudo-spherical domains define the same conformal class on their intersection. This conformal pseudo-Riemannian manifold is called the Einstein Universe $\mathbb{E}\mathrm{in}^{p-1,q}$.\\

\subsection{$\h^{p,q}$-convex cocompact groups}

We follow the definitions in \cite{DGK}.
\begin{defi} A subset $\Omega\subset \h^{p,q}$ is  properly convex if its closure in $\R\proj^{p+q}$ is convex (i.e. it is contained and convex in some affine chart).\\
A group $\G\subset\PO(p,q+1)$ is $\h^{p,q}$-convex cocompact if it is discrete and the action of $\Gamma$ on $ \h^{p,q}$ preserves a set $\Omega$ with the following properties:
\begin{enumerate}
\item $\Omega$ is closed in  $\h^{p,q}$, is properly convex and has non empty interior.
\item The intersection of the closure $\overline\Omega\subset\R\proj^{p+q}$  with $\partial\h^{p,q}$ does not contain any non trivial line segment.
\item The action of $\G$ on $\Omega$ is properly discontinuous and cocompact.
\end{enumerate} 
\end{defi}

%

We will focus most of our attention on the limit set of such a group.

\begin{defi} If $\G\subset \PO(p,q+1)$ is $\h^{p,q}$-convex cocompact, and $\Omega\subset\h^{p,q}$ satisfies the conditions mentioned above, then the limit set $\Lambda_\G$ is the set of accumulation points of the $\G$-orbit of any point in $\Omega$.
\end{defi}
It follows from the proper discontinuity of the action on $\Omega$ that it does not depend on the choice of a point in $\Omega$. Since two such sets necessarily intersect, it does not depend on the choice of $\Omega$ either.

\begin{prop}[\cite{DGK}]   If $\Gamma\subset \PO(p,q+1)$ is $\h^{p,q}$-convex cocompact, then $\Gamma$ is Gromov-hyperbolic, and the action of $\Gamma$ on $\Lambda_\G$ is topologically conjugate to the action on its Gromov boundary $\partial_\infty\Gamma$.
\end{prop}

{\color{black}For  $\h^{p,1}$-convex cocompact subgroups of $\PO(p,2)$ for which the limit set is homeomorphic to $\Ss^{p-1}$, this was proved in \cite{merigot2012anosov}.\\}
In particular, the action of $\Gamma$ on the set of triples of distinct points in $\Lambda_\G$ is properly discontinuous and cocompact, which will be of some use to us. This implies that infinite order elements of $\G$ have a north-south dynamic as for any  hyperbolic group acting on its boundary.
\begin{prop}\label{prop -north south dynamic}
If $\G\subset \PO(p,q+1)$ is a convex cocompact group, then  every infinite order element $\g\in \G\setminus\{\Id\}$ acts on $\Lambda$ with exactly two fixed points: $\g^\pm$. For every $\xi\in \Lambda \setminus\{\g^\pm\},$ we have $\lim_{n\tv \pm\infty} \g^n \xi = \g^\pm$. 
\end{prop}

\begin{defi}
For every infinite order element $\g\in \G\setminus$ we call the spacelike geodesic $(\g^-\g^+)$ the axis of $\g$. 
\end{defi}

\subsection{Negative sets, convex hulls and black domains}

Instead of considering the action on a properly convex set with non empty interior (as the ones involved in the definition of $\h^{p,q}$-convex cocompactness, we will work with the convex hull of the limit set (which can have empty interior).

\begin{defi} f $\G\subset \PO(p,q+1)$ is $\h^{p,q}$-convex cocompact, we define $C(\Lambda_\G)$ as the intersection of $\h^{p,q}$ with the convex hull of $\Lambda_\G$ defined in some affine chart containing a convex set $\Omega$ as defined above.
\end{defi}

One of the important properties of $\Lambda_\G$ is that it is negative.

{\color{black}\begin{defi} A subset $\Lambda\subset \partial\h^{p,q}$ is negative if it lifts  to a cone in $\R^{p,q+1}\setminus\{0\}$ on which all inner products for $\pscalb{\cdot}{\cdot}$ of non collinear vectors are negative.\\
If $\Lambda$ has at least three elements, this is equivalent to any triple $(\xi,\eta,\tau)\in \Lambda^3$ of pairwise distinct points satisfying $\pscalb{\txi}{\teta}\pscalb{\teta}{\ttau}\pscalb{\ttau}{\txi}<0$.
\end{defi}
Note that the sign of $\pscalb{\txi}{\teta}\pscalb{\teta}{\ttau}\pscalb{\ttau}{\txi}$ does not depend on a choice of lifts $\txi,\teta,\ttau \in  \R^{p+q+1}$. \\

This condition means that the intersection of the copy of $\R\proj^2$ spanned by $\xi,\eta,\tau$ with $\h^{p,q}$ is a totally geodesic copy of $\h^2$. As a consequence (if $\Lambda$ has at least three points), any two points of $\Lambda$ can be joined by a spacelike geodesic of $\h^{p,q}$.\\}
%

Another important subset of $\h^{p,q}$ associated to $\Lambda$ is its black domain (or invisible domain).

\begin{defi}Let $\Lambda\subset \partial\h^{p,q}$ be a negative set, and a lift $\tilde\Lambda\subset \R^{p,q+1}\setminus\{0\}$  on which all inner products of non collinear vectors are negative. Its black domain is $E(\Lambda)=\proj(\{ u\in \R^{p,q+1} \,\vert\, \pscalb{u}{v}<0 0~ \forall v\in\tilde\Lambda\}).$
\end{defi}

One can check that $E(\Lambda)$ is convex, and that it contains $C(\Lambda)$. Note that in \cite{DGK2}, the set $E(\Lambda)$ is denoted by $\Omega_{\max}$.

\begin{lemma}\label{lemma_disjoint} If $x\in E(\Lambda)$, then the dual hyperplane $x^*$ is disjoint from $C(\Lambda)$. \end{lemma}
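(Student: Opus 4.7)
The plan is to exploit the freedom in the choice of affine domain used to define $\overline{C(\Lambda)}$, by selecting the affine domain $U(x)$ associated to $x$ itself. Since $x \in E(\Lambda)$, every $\xi \in \Lambda$ satisfies $\langle x, \xi \rangle < 0$, i.e.\ $\Lambda \subset \partial U(x)$, so $U(x)$ is a legitimate choice of affine domain containing $\Lambda$, and it suffices to prove the claim with $\overline{C(\Lambda)}$ computed in this particular chart.

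In this affine chart I would represent each $\xi \in \Lambda$ by its unique lift $\tilde\xi \in \R^{n+2}$ on the half-line of $\xi$ satisfying $\langle x, \tilde\xi \rangle = -1$; equivalently, $\tilde\xi$ lies in the affine hyperplane $H_x := \{z \in \R^{n+2} \mid \langle x, z\rangle = -1\}$. The convex hull of $\Lambda$ computed in the affine chart $U(x)$ corresponds, after lifting, to ordinary convex combinations in $H_x$, and by linearity of $\langle x, \cdot\rangle$ these combinations still lie in $H_x$. Hence the lift of $\overline{C(\Lambda)}$ to $\R^{n+2}$ is contained in $H_x$.

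Transporting back to $\AdS^{n+1}$, each $y \in C(\Lambda)$ is the radial projection $y = z / \sqrt{-\langle z, z\rangle}$ of some $z \in H_x$ with $\langle z, z\rangle < 0$; hence $\langle x, y\rangle = -1/\sqrt{-\langle z, z\rangle} < 0$, and in particular $y \notin x^*$.

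The only step requiring a bit of attention is the identification between the convex hull of $\Lambda$ in the affine chart and ordinary convex combinations in $H_x$, but this is essentially built into the construction of the affine chart $U(x) \to \R^{n+1}$ recalled in the previous subsection (via the map $y \mapsto y/(-\langle x,y\rangle)$); once this is in place, the rest is a direct application of the bilinearity of $\langle \cdot, \cdot \rangle$.
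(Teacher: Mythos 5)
Your proof is correct and follows essentially the same line as the paper's: both rest on the observation that when $x\in E(\Lambda)$, the affine domain $U(x)$ contains $\Lambda$, so $\overline{C(\Lambda)}$ lies in $\overline{U(x)}$ and in particular $C(\Lambda)$ avoids $x^*$. The paper phrases this topologically (the component of $\overline{\AdS^{n+1}}\setminus\overline{x^*}$ containing $\Lambda$ is convex, hence contains $\overline{C(\Lambda)}$), whereas you make the identical point explicit by lifting to the affine slice $H_x=\{\langle x,\cdot\rangle=-1\}$ and reading off the sign of $\langle x, y\rangle$.
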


\begin{proof}
It comes from the definition of the black domain $E(\Lambda)$ that $x^*$ is disjoint from $\Lambda$. This can be translated as $\Lambda\subset \partial U(x)$.\\
 Since $U(x)\cup\partial U(x)$ is convex and it contains $\Lambda$, it must contain $\overline{C(\Lambda)}$.
\end{proof}

%
%

\subsection{Examples of $\h^{p,q}$-convex cocompact groups}

When $q=0$, the notion of $\h^{p,0}$-convex cocompactness is equivalent to the usual notion in real hyperbolic geometry. This allows to construct examples in any signature: consider a convex cocompact group $\G\subset \mathrm{O}(p,1)$, and its image through the standard embedding $\mathrm{O}(p,1)\hookrightarrow \PO(p,q+1)$. It is $\h^{p,q}$-convex cocompact. One can also consider any representation $\alpha:\G\to \mathrm{O}(q)$, and the image of its graph $\{(\g,\alpha(\g))\vert\g\in\G\}\subset \mathrm{O}(p,1)\times\mathrm{O}(q)$ through the standard embedding $\mathrm{O}(p,1)\times\mathrm{O}(q)\hookrightarrow \PO(p,q+1)$. It is also $\h^{p,q}$-convex cocompact. An interesting case is the one for which $\G$ is a uniform lattice in $\mathrm{O}(p,1)$.
\begin{defi} A group $\G\subset\PO(p,q+1)$ is called $\h^{p,q}$-Fuchsian if it acts properly discontinuously and cocompactly on a totally geodesic copy of $\h^p$.
\end{defi}
If $\G$ is $\h^{p,q}$-Fuchsian, its limit set is a smooth $(p-1)$-sphere. A direct generalization is to ask for $\Lambda_\G$ to be a topological $(p-1)$-sphere.

\begin{defi} A group $\G\subset\PO(p,q+1)$ is called $\h^{p,q}$-quasi-Fuchsian if it is $\h^{p,q}$-convex cocompact and its limit set $\Lambda_\G$ is homeomorphic to $\Ss^{p-1}$.
\end{defi}

{\color{black} In any dimension and signature, there are examples of uniform lattices $\Gamma\subset \mathrm O(p,1)$  that can be deformed non trivially in $\PO(p,q+1)$, the standard construction being the so called bending deformations \cite{johnson1987deformation}. Because of the equivalence of $\h^{p,q}$-convex cocompactness with the Anosov property showed in \cite{DGK2}, such groups are $\h^{p,q}$-quasi-Fuchsian.\\

%
%
%
%
%

This notion of $\h^{p,q}$-quasi-Fuchsian groups has  been studied when $q=1$, as $\h^{p,1}$ is an important space in Lorentzian geometry, called the anti-de Sitter space $\ADS^{p+1}$. Torsion free $\ADS^{p+1}$-quasi-Fuchsian groups in $\PO(p,2)$ are related to globally hyperbolic $\ADS^{p+1}$-spacetimes \cite{mess2007lorentz,barbot2008causal}

\indent One can also consider deformations of Fuchsian groups, i.e. groups $\Gamma=\rho_1(\Gamma_0)$ where $\Gamma_0\subset\mathrm O(p,1)$ is a uniform lattice and $(\rho_t)_{t\in [0,1]}$ is a continuous path of representations with $\rho_0=$ being the restriction to $\G_0$ of the standard inclusion $\mathrm{O}(p,1)\hookrightarrow \PO(p,2)$.

\begin{theorem}\cite{barbot2015deformations}
Deformations of $\ADS^{p+1}$-Fuchsian groups are $\ADS^{p+1}$-quasi-Fuchsian.
\end{theorem}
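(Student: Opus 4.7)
The plan is to combine the structural stability of Anosov representations with a continuity argument on the inner product $\pscal{\cdot}{\cdot}$. First I would verify that a Fuchsian representation $\rho_0=i_x$ is Anosov with respect to the parabolic subgroup $P\subset\SO(2,n)$ stabilizing an isotropic line in $\R^{n+2}$ (equivalently, a point of $\partial\AdS^{n+1}$): since $\Gamma_0$ acts cocompactly on $x^*\cong\Hyp^n$, the composition of the canonical $\Gamma_0$-equivariant boundary map $\partial_\infty\Gamma_0\to\partial x^*$ with the inclusion $\partial x^*\hookrightarrow\partial\AdS^{n+1}$ is an Anosov limit map, and its image $\partial x^*$ is acausal.

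By the structural stability of Anosov representations (Labourie, Guichard--Wienhard), the set of $P$-Anosov representations of $\Gamma_0$ in $\SO(2,n)$ is open in $\Hom(\Gamma_0,\SO(2,n))$, and the limit map $\xi_\rho\colon\partial_\infty\Gamma_0\to\partial\AdS^{n+1}$ varies continuously (in $C^0$) with $\rho$. The transversality condition for the Anosov limit map, expressed through the inner product on $\R^{n+2}$, reads $\pscal{\xi_\rho(a)}{\xi_\rho(b)}\neq 0$ for all distinct $a,b\in\partial_\infty\Gamma_0$. Starting from the negativity of this inner product at $\rho_0$ (acausality of $\partial x^*$), continuity in $\rho$ together with a sign argument forces $\pscal{\xi_{\rho_t}(a)}{\xi_{\rho_t}(b)}<0$ for all distinct $a,b$, as long as the path stays in the Anosov locus. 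Thus $\Lambda_t=\xi_{\rho_t}(\partial_\infty\Gamma_0)$ is an acausal topological $(n-1)$-sphere preserved by $\rho_t(\Gamma_0)$, and $\rho_t$ is injective with discrete image (standard for Anosov representations of torsion-free hyperbolic groups).

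To recover cocompactness of the action on $C(\Lambda_t)$, I would use that the Anosov property gives a proper cocompact action on the space of pairs of transversal points in $\partial_\infty\Gamma_0$, hence on the space of spacelike geodesics of $\AdS^{n+1}$ with endpoints in $\Lambda_t$. Since $\Lambda_t$ lies inside a de Sitter domain $\partial U(y)$ and $C(\Lambda_t)\subset U(y)$ is foliated (in the affine chart) by such spacelike geodesics, cocompactness of $\rho_t(\Gamma_0)\curvearrowright C(\Lambda_t)$ follows by a standard compactness argument in the affine model.

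The main obstacle is to promote openness to the statement that the entire continuous path lies in the quasi-Fuchsian locus: one must show that the set $T=\{t\in[0,1]\mid \rho_t\text{ is quasi-Fuchsian}\}$ is both open and closed in $[0,1]$. Openness follows from the stability theorem as above; for closedness, along a sequence $t_n\to t_\infty$ in $T$, one would extract a subsequential uniform limit of the equivariant limit maps $\xi_{\rho_{t_n}}$ and show that injectivity and the transversality condition $\pscal{\cdot}{\cdot}<0$ survive in the limit. This is the delicate point, and it typically requires uniformly controlling the Anosov expansion--contraction rates along the compact parameter interval $[0,1]$ so that the transversality constants do not degenerate; once this is achieved, connectedness of $[0,1]$ yields $T=[0,1]$, which is exactly the statement of the theorem.
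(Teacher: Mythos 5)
The paper does not prove this theorem; it cites Barbot's paper \cite{barbot2015deformations} for it, so there is no internal proof to compare against. Your outline does match the strategy of Barbot's proof — show that the quasi-Fuchsian locus along the path is both open and closed, and conclude by connectedness of $[0,1]$. The openness half (Anosov stability of $\rho_0=i_x$, continuity of the limit map $\xi_\rho$, local constancy of the sign of $\pscal{\xi_\rho(a)}{\xi_\rho(b)}$ on the connected set of distinct pairs, and the passage from Anosov\,$+$\,acausality to cocompactness on the convex hull via \cite{merigot2012anosov}) is essentially sound.

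The genuine gap is exactly where you flag it: closedness. The argument you offer — that one should ``uniformly control the Anosov expansion--contraction rates along the compact parameter interval $[0,1]$ so that the transversality constants do not degenerate'' — is circular as stated. Uniform bounds over $[0,1]$ do not follow from compactness of the interval, because a priori only the (unknown, possibly proper) subset $T\subset[0,1]$ carries Anosov structure; without already knowing $T=[0,1]$ there is no compact Anosov family to extract uniform constants from. The Anosov locus in $\Hom(\Gamma_0,\SO(2,n))$ is open but in general not closed, so the degeneration you need to rule out (the limit map becoming non-injective or losing transversality at the boundary of $T$) is a real possibility and is precisely what Barbot's paper is devoted to excluding. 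His argument for closedness relies on geometric structure specific to globally hyperbolic AdS spacetimes (bounds on the geometry of the convex core and its Cauchy hypersurfaces that persist under limits), not on a soft compactness argument in the representation variety. As written, your proof reduces the theorem to an unproved claim that is, in substance, the theorem itself.

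A smaller issue: the step from ``Anosov with acausal sphere'' to ``cocompact action on $C(\Lambda_t)$'' is attributed to ``a standard compactness argument in the affine model,'' but it is not clear that $C(\Lambda_t)$ is swept out by spacelike geodesics with both endpoints in $\Lambda_t$, and the actual argument (in \cite{merigot2012anosov}) is more involved. You should cite that result rather than sketch a foliation argument.
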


\indent Note that deformations of Fuchsian groups are not the only examples of quasi-Fuchsian groups in $\PO(p,2)$, as examples of $\ADS^{p+1}$-quasi-Fuchsian groups $\Gamma\subset \PO(p,2)$ which are not isomorphic to a lattice in $\mathrm{O}(p,1)$ have recently been constructed  by G-S. Lee and L. Marquis \cite{Lee2017ads} for $4\leq p\leq 8$.}

Several interesting families of surface groups representations provide examples of $\h^{p,q}$-convex cocompact groups when the group $\PO(p,q+1)$ has specific Lie-theoretic properties. For the real split forms (i.e. $p=q$ or $p=q+1$), one can study Hitchin representations.

\begin{prop}[Proposition 11.14 in \cite{DGK2}] Let $\G$ be the fundamental group of a closed orientable hyperbolic surface, and let $m\geq 1$.\\
For any Hitchin representation $\rho:\G\to\PO(m+1,m)$, the group $\rho(\G)$ is $\h^{m+1,m-1}$-convex cocompact if $m$ is odd, and $\h^{m,m}$-convex cocompact if $m$ is even.\\
For any Hitchin representation $\rho:\G\to\PO(m+1,m)$, the group $\rho(\G)$ is $\h^{m+1,m-1}$-convex cocompact if $m$ is odd, and $\h^{m,m}$-convex cocompact if $m$ is even.
\end{prop}

When $p=2$, the Lie group $\PO(2,q+1)$ is of Hermitian type, and one can study maximal representations of surface groups. It is shown in \cite{BILW} that maximality is equivalent to the Anosov property with an additional property on the limit set which can be shown to be equivalent to negativity. Combining with the results of \cite{DGK2}, one gets the following:

\begin{theo}[\cite{BILW,DGK2}] Let $\G$ be the fundamental group of a closed orientable hyperbolic surface, and let $q\geq 0$. A faithful representation $\rho:\G\to\PO(2,q+1)$ is maximal if and only if its image is $\h^{2,q}$-quasi-Fuchsian.
\end{theo}

These representations are studied in \cite{CTT}. \\

Note that there is an example common to all these situations, which is $\h^{2,1}=\AdS^3$. In this case, we can use the exceptional isomorphism $\PO(2,2)\approx \PSL(2,\R)\times\PSL(2,\R)$ to describe $\h^{2,1}$-quasi-Fuchsian groups.

\begin{theo}[\cite{mess2007lorentz,barbot2008causal}] A torsion free group $\Gamma\subset \PO(2,2)\approx \PSL(2,\R)\times \PSL(2,\R)$ is $\h^{2,1}$-quasi-Fuchsian if and only if there is a closed orientable surface $S$, and  two hyperbolic metrics $h_1,h_2$ on $S$  such that \[\Gamma= \{(\rho_1(\gamma),\rho_2(\gamma))\,\vert\, \gamma\in\pi_1(S) \}\] where  $\rho_1,\rho_2:\pi_1(S)\to \PSL(2,\R)$ are the holonomy representations of $h_1,h_2$.

\end{theo}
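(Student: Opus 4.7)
The plan is to exploit the concrete model $\AdS^3\cong\SL(2,\R)$ and its boundary $\partial\AdS^3\cong\RP^1\times\RP^1$. In this model the identity component of the isometry group acts as $(g_1,g_2)\cdot x = g_1xg_2^{-1}$, realizing $\PSL(2,\R)\times\PSL(2,\R)\to \SO_\circ(2,2)$. The projectivized isotropic cone in $M(2,\R)$ consists of rank one matrices, which parametrize as $(\ell,m)\in\RP^1\times\RP^1$, and the action is the product action factor by factor. One checks, using the inner product $\langle A\mid B\rangle=-\tfrac12\Tr(A^{-1}B)$ for boundary representatives, that $\langle(\ell_1,\ell_2)\mid(m_1,m_2)\rangle=0$ if and only if $\ell_1=m_1$ or $\ell_2=m_2$. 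Hence an acausal subset of $\partial\AdS^3$ is exactly a set on which both coordinate projections are injective.

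For the easy direction, start with hyperbolic metrics $h_1,h_2$ on $S$ with holonomies $\rho_1,\rho_2\colon\pi_1(S)\to\PSL(2,\R)$ and equivariant boundary homeomorphisms $\xi_i\colon\partial_\infty\pi_1(S)\to\RP^1$. Define
\[\Lambda=\{(\xi_1(p),\xi_2(p))\mid p\in\partial_\infty\pi_1(S)\}\subset\RP^1\times\RP^1,\]
which is a topological circle invariant under $\Gamma=\{(\rho_1(\gamma),\rho_2(\gamma))\mid\gamma\in\pi_1(S)\}$. Since each $\xi_i$ is a bijection, the coordinate projections are injective on $\Lambda$, so $\Lambda$ is acausal. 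The action of $\Gamma$ on the space of triples of distinct points of $\Lambda$ is conjugate, via $(\xi_1,\xi_2)$, to the action of $\pi_1(S)$ on distinct triples of its Gromov boundary, which is properly discontinuous and cocompact; this is known to imply cocompactness of the action of $\Gamma$ on $C(\Lambda)$ (see the remark after the quasi-Fuchsian definition).

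For the converse, let $\Gamma$ be quasi-Fuchsian with limit set $\Lambda$, a topological circle in $\RP^1\times\RP^1$ on which both projections $\pi_i$ are injective. Hence $\Lambda$ is the graph of a homeomorphism $f\colon\RP^1\to\RP^1$, and in particular $\pi_i(\Lambda)=\RP^1$. Set $\rho_i=\pi_i\vert_\Gamma$. By the north--south dynamics proposition, every $\gamma\in\Gamma\setminus\{\Id\}$ has two fixed points $\gamma^\pm\in\Lambda$, which project under $\pi_i$ to two distinct fixed points of $\rho_i(\gamma)$ on $\RP^1$; therefore $\rho_i(\gamma)$ is a non-trivial hyperbolic element of $\PSL(2,\R)$. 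This gives injectivity of $\rho_i$ and shows that $\rho_i(\Gamma)$ contains no parabolic element. Since $\Gamma$ is Gromov-hyperbolic with boundary the topological circle $\Lambda$, by the Tukia--Gabai--Casson--Jungreis theorem $\Gamma$ is isomorphic to $\pi_1(S)$ for some closed surface $S$ of genus $g\geq 2$.

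The main obstacle is to upgrade each $\rho_i$ to the holonomy of a hyperbolic structure. The action of $\rho_i(\Gamma)$ on $\RP^1$ is topologically conjugate (via $\pi_i\vert_\Lambda$) to the action of $\Gamma$ on $\Lambda$, hence is minimal with attractor--repeller dynamics for every non-trivial element; this forces $\rho_i(\Gamma)$ to be discrete and non-elementary, with limit set all of $\RP^1$. A discrete non-elementary torsion-free subgroup of $\PSL(2,\R)$ whose limit set is the entire circle and which contains no parabolics is a uniform lattice, i.e.\ the holonomy of a hyperbolic metric $h_i$ on the quotient surface. Finally $\Gamma\subset\PSL(2,\R)\times\PSL(2,\R)$ is by construction the graph $\{(\rho_1(\gamma),\rho_2(\gamma))\mid\gamma\in\pi_1(S)\}$, and since both $\rho_i$ are faithful with the same abstract domain $\pi_1(S)$, this yields the desired description.
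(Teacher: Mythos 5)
The paper does not prove this theorem; it cites \cite{mess2007lorentz,barbot2008causal} and moves on, so your argument is being judged on its own rather than against a proof in the text. Your overall plan --- identify $\partial\AdS^3$ with $\RP^1\times\RP^1$, show acausal circles are graphs, and analyse the two coordinate representations --- is the right one and matches the standard proofs. Two of your steps, however, are not justified as written.

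First, the sentence ``$\langle(\ell_1,\ell_2)\mid(m_1,m_2)\rangle=0$ iff $\ell_1=m_1$ or $\ell_2=m_2$; hence an acausal subset is exactly a set on which both projections are injective'' is a non-sequitur. Acausality is the condition $\langle\xi\mid\eta\rangle<0$, not $\neq 0$. Writing boundary points as rank-one matrices $v w^T$, one computes $\langle v_1w_1^T\mid v_2w_2^T\rangle=-\tfrac12\det(v_1|v_2)\det(w_1|w_2)$, so the sign depends on the \emph{relative orientation} of $(\ell_1,m_1)$ and $(\ell_2,m_2)$, not only on their being distinct. In your forward direction this forces you to check that $\xi_2\circ\xi_1^{-1}$ is orientation-preserving (it is, once orientations are normalized, since both $\xi_i$ come from developing maps of hyperbolic structures), and in the converse it is what singles out the ``positive'' graphs among all circles with injective projections. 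Also, the formula $\langle A\mid B\rangle=-\tfrac12\Tr(A^{-1}B)$ cannot literally be applied to boundary representatives, since they are not invertible; one should use the adjugate, $\langle A\mid B\rangle=-\tfrac12\Tr(\mathrm{adj}(A)B)$.

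Second, and more seriously, the phrase ``this forces $\rho_i(\Gamma)$ to be discrete and non-elementary'' is doing the hard work of the theorem without proof. From north--south dynamics you get that every nontrivial $\rho_i(\gamma)$ is hyperbolic and that $\rho_i$ is faithful, and you correctly identify the action of $\rho_i(\Gamma)$ on $\RP^1$ as topologically conjugate to the boundary action of a surface group. But topological conjugacy of circle actions does not by itself give discreteness inside $\PSL(2,\R)$: you need either the fact (Goldman, via Milnor--Wood) that a surface-group representation into $\PSL(2,\R)$ whose circle action has maximal Euler class is Fuchsian and cocompact, or the lemma that a non-elementary subgroup of $\PSL(2,\R)$ all of whose nontrivial elements are hyperbolic is discrete, combined with an argument that the resulting finitely generated Fuchsian group of the first kind with no parabolics is a uniform lattice. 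Either route is fine, but one of them must be invoked explicitly; as written the inference is a gap. The forward-direction claim that cocompactness on triples of $\Lambda$ implies cocompactness on $C(\Lambda)$ likewise deserves a reference (the paper only records the opposite implication); a cleaner alternative there is to connect $(\rho_1,\rho_2)$ to a Fuchsian $(\rho_1,\rho_1)$ through Teichm\"uller space and invoke Barbot's deformation theorem \cite{barbot2015deformations}, which the paper already cites.
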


This is a Lorentzian analogue of the Bers simultaneous uniformization theorem \cite{bers1972uniformization}. In this description of quasi-Fuchsian subgroups of $\PO(2,2)$ (called the Mess parametrization), Fuchsian groups correspond to pairs $(\rho_1,\rho_2)$ where $\rho_1$ and $\rho_2$ are conjugate in $\PSL(2,\R)$ (i.e. they represent the same point in the Teichmüller space of $S$).

\section{Geometric toolbox}\label{sec -geometric toolbox}
This section contains all the geometric lemmas that will be used in the rest of the paper. We start by the definition of the pseudo-Riemannian distance in the convex hull, $d_{\h^{p,q}}$. It is by definition semi-definite and we prove that it satisfies a triangle inequality up to fixed additive constant.  We then study the geometry of the limit set of $\G$ from the point of view of the distance $d_{\h^{p,q}}$, in particular we show that every limit points are radial limit points. Finally we define the Gromov product and distance on the boundary as in the case of Gromov hyperbolic spaces and study their properties.\\

{\color{black}From now on, a quasi-Fuchsian group $\G\subset \PO(p,q+1)$ is fixed, and $\Lambda\subset \partial\h^{p,q}$ is its limit set.}

\subsection{Triangle inequality}\label{sec - triangle inequality}


%
%
%
%
%
%
%
%
%
%
%
%
%
%
%
%

\begin{defi}\label{def - definition de la distance dasn C(Lambda)}
Let $x,y\in \h^{p,q}$. If $(xy)$ is spacelike, we denote by $d_{\h^{p,q}}(x,y)$ the length  of the spacelike  geodesic in $\h^{p,q}$ between $x$ and $y$. If $(xy)$ is lightlike or timelike then we set  $d_{\h^{p,q}}(x,y)=0$.
\end{defi}

\begin{prop}\label{prop - formula for the distance}
Let $x,y\in \h^{p,q}$. Then $x$ and $y$ are joined by a spacelike geodesic if and only if $|\pscal{\tx}{\ty}|_{p,q+1}>1 $. Moreover, in that case 
$$d_{\h^{p,q}}(x,y) =\Ach(|\pscal{\tx}{\ty}|_{p,q+1}  ).$$
\end{prop}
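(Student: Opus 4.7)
The first equivalence is essentially already recorded: in the classification of geodesic types given earlier in this section, the geodesic through $x$ and $y$ is spacelike precisely when $\pscal{x}{y} < -1$. So I would only need to establish the distance formula, under the assumption $\pscal{x}{y} < -1$.

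My plan is to produce an explicit unit-speed parametrization of the spacelike geodesic $(xy)$ and read off its length. I would begin by decomposing $y$ orthogonally with respect to the line $\R x$. Since $\pscal{x}{x} = -1$, the component of $y$ along $\R x$ is $-\pscal{x}{y}\,x$, so write
\[ y = -\pscal{x}{y}\,x + w, \qquad w := y + \pscal{x}{y}\,x. \]
A direct computation using $\pscal{x}{x} = \pscal{y}{y} = -1$ gives $\pscal{w}{x} = 0$ and $\pscal{w}{w} = \pscal{x}{y}^2 - 1$. The hypothesis $\pscal{x}{y} < -1$ makes this positive, so $w$ is spacelike and I can set $v := w/\sqrt{\pscal{x}{y}^2 - 1}$, a unit spacelike vector orthogonal to $x$.

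Next I would introduce the curve
\[ \gamma(s) := \cosh(s)\,x + \sinh(s)\,v. \]
Since $\pscal{x}{x} = -1$, $\pscal{v}{v} = 1$, $\pscal{x}{v} = 0$, one checks immediately that $\pscal{\gamma(s)}{\gamma(s)} = -1$, so $\gamma$ takes values in $\AdS^{n+1}$, lies in the $2$-plane $\mathrm{span}(x,v) = \mathrm{span}(x,y)$, and is therefore the geodesic $(xy)$. Its velocity $\gamma'(s) = \sinh(s)\,x + \cosh(s)\,v$ satisfies $\pscal{\gamma'(s)}{\gamma'(s)} = 1$, so $\gamma$ is parametrized by arc length and $\gamma'(s)$ is spacelike, consistent with $(xy)$ being a spacelike geodesic.

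Finally I would identify the parameter at which $\gamma$ reaches $y$. Setting $\alpha := -\pscal{x}{y} > 1$, the decomposition above reads $y = \alpha\,x + \sqrt{\alpha^2-1}\,v$, which matches $\gamma(s_0)$ for the unique $s_0 > 0$ with $\cosh(s_0) = \alpha$, i.e.\ $s_0 = \Ach(|\pscal{x}{y}|)$. Because $\gamma$ is arc-length parametrized, $d(x,y) = s_0 = \Ach(|\pscal{x}{y}|)$, which is the claimed formula. There is no real obstacle: the proof is a one-page computation, and the only thing to be careful about is the sign $\alpha > 1$, which is exactly guaranteed by the spacelike hypothesis $\pscal{x}{y} < -1$.
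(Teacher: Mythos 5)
Your argument is correct. It differs from the paper's proof only in how it sets up the parametrization: the paper appeals to the transitivity of $\SO(2,n)$ on spacelike geodesics and the $\SO(2,n)$-invariance of both sides, reducing everything to the single model geodesic $t\mapsto (0,\cosh t,\sinh t,0,\dots,0)$, where the identity $\pscal{x}{y}=-\cosh t$ is immediate. You instead perform a Gram--Schmidt step intrinsically, writing $y=-\pscal{x}{y}\,x+w$ with $w\perp x$, $\pscal{w}{w}=\pscal{x}{y}^2-1>0$, normalizing to $v$, and reading off $y=\gamma(s_0)$ for $\gamma(s)=\cosh(s)\,x+\sinh(s)\,v$ with $\cosh(s_0)=-\pscal{x}{y}$. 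The two routes produce the same arc-length parametrization; yours is self-contained (no need to invoke the homogeneity of the space of spacelike geodesics and the invariance of the two sides), while the paper's is shorter once those two facts are granted. One small point worth making explicit in your write-up: the geodesic, defined as the intersection of the $2$-plane $\mathrm{span}(x,y)$ with $\AdS^{n+1}$, is a hyperbola with two branches, and $\gamma$ parametrizes only the branch through $x$; since $-\pscal{x}{y}>0$, the point $y$ indeed lies on that same branch, so the length you compute is the length of the arc actually joining $x$ to $y$.
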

\begin{proof}
{\color{black}{
Choose $\tx,\ty\in \h^{p,q}$  any representatives of  $x$ and $y$. Since $\pscalb{\tx}{\tx}=\pscalb{\ty}{\ty}=-1$, the matrix of the restriction of  $\pscalb{\cdot}{\cdot}$   to $x\oplus y$ in the basis $(\tx,\ty)$ is $\begin{pmatrix}
-1 & \pscal{\tx}{\ty}_{p,q+1}\\
\pscal{\tx}{\ty}_{p,q+1} & -1
\end{pmatrix}$. The geodesic $(xy)$ is space like if and only if the signature of this matrix is $(1,1)$, which is equivalent to $|\pscal{\tx}{\ty}|_{p,q+1}>1$. 
}}
In that case, up to the action of $\PO(p,q+1)$, we can assume that $\tx=(0, \dots,  0, 1)$ and $\ty=(\sinh(t), 0, \dots, 0, \cosh(t))$ for some $t\in \R$.  We find $d_{\h^{p,q}}(x,y)=t$ and $\vert\pscalb{\tx}{\ty}\vert=\cosh(t)$. 
\end{proof}
Remark that for any $x,y\in \h^{p,q}$ we have, $$d_{\h^{p,q}}(x,y)=\Ach(\max(1, |\pscal{\tx}{\ty}|_{p,q+1}  ).$$

{\color{black}{Let $x\in \h^{p,q}$ and  $r\in \R$. If we were to define closed balls for $d_{\h^{p,q}}$ as   $\{y \in \h^{p,q}\, , \, d_{\h^{p,q}}(x,y)\leq r \} = \{y \in \h^{p,q}\, , \, |\pscalb{\tx}{\ty}| \leq \cosh(r) \} $, then closed balls would not be compact. For this reason, we only consider balls in the convex hull $C(\Lambda)$: 
\begin{lemme}\label{lem - balls are compact}
For all $r>0$ and all $x\in C(\Lambda)$, the set  
 \[B_{C(\Lambda)}(x,r)=\{y \in C(\Lambda)\, , \,  |\pscalb{\tx}{\ty}| \leq \cosh(r) \}\] is compact. We call it the $\h^{p,q}$ or pseudo-Riemannian ball of center $x$ and radius $r$.
\end{lemme}
\begin{proof}
Take $y_k$ a sequence in $\{y \in C(\Lambda) \, , \,  |\pscalb{\tx}{\ty}| \leq \cosh(r) \} $ and suppose by contradiction that it is not bounded. Then we can extract a converging subsequence to  $\eta\in  \partial\h^{p,q}$ that we still denote by $y_k$.   
Since $y_k\tv \xi \in \partial \h^{p,q}$ we have  $\lim_{k\tv \infty} \|\ty_k\|_{e} =+\infty$, where $\|\cdot\|_e$ denotes the Euclidean norm. Indeed, we have $\pscalb{\ty_k}{\ty_k}=-1$,  $\pscalb{\txi}{\txi}=0$ and therefore  
\begin{eqnarray*}
\lim_{k\tv \infty}\pscalb{ \frac{\ty_k}{\|\ty_k\|_{e}}}{\frac{\ty_k}{\|\ty_k\|_{e}}} &=& \pscalb{\txi}{\txi}\\
\lim_{k\tv \infty}\frac{-1}{\|\ty_k\|_{e}}&=& 0
\end{eqnarray*}
Now $ \|\ty_k\|_{e}|\langle\tx|\frac{\ty_k}{\|\ty_k\|_{e}}\rangle_{p,q+1}|=|\langle\tx|\ty_k\rangle_{p,q+1} |\leq \cosh(r). $  However, since $x\in C(\Lambda)\subset E(\Lambda)$, and $\xi\in \Lambda$ we have, $\langle \tx|\txi\rangle_{p,q+1}\neq 0$, or for the lifts  $|\langle \tx|\txi\rangle_{p,q+1}|> 0$. Finally we have: $$\lim_{k\tv \infty}  \|\ty_k\|_{e}|\langle\tx|\frac{\ty_k}{\|\ty_k\|_{e}}\rangle_{p,q+1}|= \lim_{k\tv \infty}  \|\ty_k\|_{e} | \langle \tx|\txi\rangle_{p,q+1}|=+ \infty.$$ 
This is a contradiction.

\end{proof}

More generally, with the same proof,  we can show that the $r$-neighborhood of any compact  set $K$  is compact. It will be denoted by $B_ {C(\Lambda)}(K,r)$.

Balls of radius $0$ are cones and  those of positive radius are interiors of one sheeted hyperboloids (more precisely intersection of cones or hyperboloids with $C(\Lambda)$.)

}}

\begin{lemma} \label{lemma_bounded} The function $F:  C(\Lambda)^3\to \R$ defined by $F(x,y,z) =  \frac{\pscalb{\tx}{\ty}}{\pscalb{\tx}{\tz}\pscalb{\tz}{\ty}}$ extends to a continuous bounded function  on $\overline{ C(\Lambda)}^2  \times C(\Lambda)$.
\end{lemma}
 \begin{proof}
First, notice that for all $x\in C(\Lambda)\subset E(\Lambda)$ and  $z\in C(\Lambda)$ we have by Lemma \ref{lemma_disjoint}  $\pscalb{\tx}{\tz} \neq 0$. Hence $F$ is well defined. \\
For all $\xi \in \Lambda$ and all  $z\in C(\Lambda)\subset E(\Lambda)$, we have by definition of $E(\Lambda)$ that $\pscalb{\txi}{\tz} \neq 0$. Therefore  $F$ extends to a function on $ 
  \overline{C(\Lambda)}^2  \times C(\Lambda)$.  As it is defined by ratios of scalar products, it is continuous. {\color{black}{We  can verify it by basic computations, using lifts of the different elements: we take $x_i$ a sequence converging to $\xi\in \partial \h^{p,q}$. The lifts satisfy $\lim \frac{\tx_i}{\|\tx_i\|_{e}}= \txi$ for some lift $\txi$ of $\xi$.  
  \begin{align*}F(x_i,y,z) &= \frac{\pscalb{\tx_i}{\ty}}{\pscalb{\tx_i}{\tz}\pscalb{\tz}{\ty}}\\ 
  &= \frac{\pscalb{\frac{\tx_i}{\|\tx_i\|_{e}}}{\ty}}{\pscalb{\frac{\tx_i}{\|\tx_i\|_{e}}}{\tz}\pscalb{\tz}{\ty}} 
  &\longrightarrow  \frac{\pscalb{\txi}{\ty}}{\pscalb{\txi}{\tz}\pscalb{\tz}{\ty}}=F(\xi,y,z)
  \end{align*} 
}}
 Let $K\subset C(\Lambda)$ be a compact set such that $\Gamma.K=C(\Lambda)$. Since $F$ is $\Gamma$-invariant, its values are all taken on the compact set $\overline{C(\Lambda)}^2\times K$ and $F$ is continuous, which shows that $F$ is bounded.
 \end{proof}

\begin{theorem}[Triangle inequality]\label{th - triangle inequality}
There is a constant $k_\G>0$ such that $d_{\h^{p,q}}(x,y)\le d_{\h^{p,q}}(x,z)+d_{\h^{p,q}}(z,y)+k_\G$ for all $x,y,z\in C(\Lambda)$.
\end{theorem}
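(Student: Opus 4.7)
The plan is to transfer the triangle inequality for the Lorentzian distance $d$ into a multiplicative inequality on scalar products, using the formula $d(x,y) = \mathrm{Argcosh}|\langle x|y\rangle|$ from Proposition~\ref{prop - formula for the distance} together with the asymptotic behaviour $\mathrm{Argcosh}(t) = \log t + O(1)$ as $t\to\infty$. The boundedness of $F$ furnished by Lemma~\ref{lemma_bounded} will then directly supply the additive constant $k$.

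Concretely, I would first record two elementary inequalities, valid for all $t\ge 1$:
$$\log t \;\le\; \mathrm{Argcosh}(t) \;\le\; \log(2t),$$
which follow at once from $\tfrac{1}{2}e^s\le\cosh(s)\le e^s$ for $s\ge 0$. Next I would verify that $\langle x|y\rangle<0$ for every pair $x,y\in C(\Lambda)$. Fixing $y\in C(\Lambda)\subset E(\Lambda)$, the continuous function $\langle y|\cdot\rangle$ is strictly negative on $\Lambda$ by the definition of $E(\Lambda)$, and has no zero on $C(\Lambda)$ by Lemma~\ref{lemma_disjoint}; since $\overline{C(\Lambda)}=C(\Lambda)\cup\Lambda$ is convex, hence connected, its sign is constant, so $\langle y|x\rangle<0$ throughout $\overline{C(\Lambda)}$. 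With this in hand one deduces the key bound $\log|\langle x|y\rangle|\le d(x,y)$ for all $x,y\in C(\Lambda)$: if $|\langle x|y\rangle|\le 1$ the right-hand side is nonnegative and the left-hand side is nonpositive, while otherwise $\langle x|y\rangle<-1$, the joining geodesic is spacelike, and $d(x,y)=\mathrm{Argcosh}|\langle x|y\rangle|\ge \log|\langle x|y\rangle|$ by the first elementary inequality.

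It remains to assemble the pieces. Let $M=\sup|F|$ on $\overline{E(\Lambda)}^2\times C(\Lambda)$, which is finite by Lemma~\ref{lemma_bounded}. Then $|\langle x|y\rangle|\le M\,|\langle x|z\rangle|\,|\langle z|y\rangle|$, and taking logarithms and applying the bound of the previous paragraph to the pairs $(x,z)$ and $(z,y)$ gives
$$\log|\langle x|y\rangle|\;\le\;\log M + d(x,z)+d(z,y).$$
When $d(x,y)=0$ the conclusion is immediate; otherwise $|\langle x|y\rangle|>1$ and the upper estimate $d(x,y)\le \log 2+\log|\langle x|y\rangle|$ combined with the last display yields $d(x,y)\le \log(2M)+d(x,z)+d(z,y)$, so $k=\log(2M)$ does the job. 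The only delicate point in this strategy is the sign argument showing $\langle x|y\rangle<0$ on $C(\Lambda)$ — everything else is bookkeeping, and the genuine content of the theorem is really hidden in Lemma~\ref{lemma_bounded}, whose boundedness of $F$ ultimately rests on cocompactness of the $\Gamma$-action on $C(\Lambda)$.
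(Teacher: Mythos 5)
Your proof is correct, and it takes a genuinely cleaner route than the paper's. The paper proves the same conclusion by a four-fold case analysis: (i) $x,y$ causally related; (ii) all three pairs spacelike; (iii) $d(x,z)=d(y,z)=0$; (iv) $d(x,z)>0$, $d(y,z)=0$. Case (ii) is handled exactly as you do, via Lemma~\ref{lemma_bounded} together with the estimate $\ln t \le \Ach t \le \ln t + \ln 2$, but cases (iii) and (iv) are dispatched separately by compactness arguments: for (iii) the paper exhibits a compact set $J(K)$ containing both $x$ and $y$, and for (iv) it invokes boundedness of the Busemann function on $\overline{C(\Lambda)}$. Your observation that the uniform inequality $\log\vert\langle x\vert y\rangle\vert \le d(x,y)$ holds for \emph{all} pairs $x,y\in C(\Lambda)$ — trivially when $\vert\langle x\vert y\rangle\vert\le 1$, and by $\log t\le \Ach t$ when the pair is spacelike — collapses all of the paper's cases into a single argument resting only on Lemma~\ref{lemma_bounded}, which is a clean improvement. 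Your preliminary sign verification $\langle x\vert y\rangle <0$ is not a mere technicality either: it rules out the a priori possibility $\langle x\vert y\rangle >1$ (which would also give a spacelike geodesic but one on which $x$ and $y$ lie on opposite branches), a point the paper's Proposition~\ref{prop - formula for the distance} implicitly assumes. The one cosmetic remark: since $M$ could be small, take $k=\max(\log(2M),1)$ to guarantee $k>0$, but this is immaterial.
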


\begin{proof}
If $x$ and $y$ are causally related (that is $|\pscalb{\tx}{\ty}|\leq 1$), the inequality is automatic because the left hand side is $0$, so we can assume that $x$ and $y$ are separated by a spacelike geodesic. {\color{black}{This means that $d_{\h^{p,q}}(x,y)=\Ach \vert\langle x\vert y \rangle_{p,q+1}\vert $. We are going to study the three disjoint cases: $d_{\h^{p,q}}(x,z)=d_{\h^{p,q}}(y,z)=0$, $d_{\h^{p,q}}(x,z)>0$ and $d_{\h^{p,q}}(y,z)=0$, and the last one, with both  $d_{\h^{p,q}}(x,z)>0$ and $d_{\h^{p,q}}(y,z)>0$. 

\begin{itemize}
\item First assume that $d_{\h^{p,q}}(x,z)=d_{\h^{p,q}}(y,z)=0$. We wish to show that $d_{\h^{p,q}}(x,y)$ is bounded. Up to the action of $\Gamma$, we can suppose that $z$ belongs to a compact fundamental domain $K$. The conditions imply that $x$ and $y$ are in the $0$-neighborhood of $z$, so in the $0$-neighborhood of $K$, which is compact from  Lemma \ref{lem - balls are compact}. It follows that $d_{\h^{p,q}}(x,y)$ is bounded.

\item  Now assume that  $d_{\h^{p,q}}(x,z)>0$ and $d_{\h^{p,q}}(y,z)=0$. Again, we can assume that $z$ is in a compact fundamental domain $K$, and $y$ lies in the compact $B_{C(\Lambda)}(K,0)$, the $0$-neighborhood of K inside $C(\Lambda)$.  Consider $g(x,y,z)=d_{\h^{p,q}}(x,y)-d_{\h^{p,q}}(x,z) \leq \ln\left(\frac{|\langle \tx | \ty \rangle_{p,q+1}|}{|\langle \tx | \tz \rangle_{p,q+1}|} \right)+\ln(2)$, this is a continuous function on $C(\Lambda)\times B_{C(\Lambda)}(K,0)\times K$. Similarly as Lemma \ref{lemma_bounded}, this function extends to a continuous function on $\overline{C(\Lambda)}\times B_{C(\Lambda)}(K,0)\times K$ which is compact. It is therefore bounded.

\item Finally assume that $x,y,z$ are pairwise joined by spacelike geodesics, that is $ d_{\h^{p,q}}(x,z)=\Ach \vert\langle x\vert z \rangle_{p,q+1}\vert $ and $d_{\h^{p,q}}(z,y) =  \Ach \vert\langle z\vert y \rangle_{p,q+1}\vert$. Since we have $\ln t \le \Ach t \le \ln t + \ln 2$ for all $t>1$, in order to show that $(x,y,z)\mapsto d_{\h^{p,q}}(x,y)- d_{\h^{p,q}}(x,z)- d_{\h^{p,q}}(z,y)$ is bounded from above, it is enough to show that it is the case for $(x,y,z)\mapsto \ln  \frac{\vert\langle x\vert y \rangle_{p,q+1}\vert}{\vert\langle x\vert z \rangle_{p,q+1}\langle z\vert y \rangle_{p,q+1}\vert}$. This is true because of Lemma \ref{lemma_bounded}.\\ 

\end{itemize}
}}

\end{proof}

{\color{black}{ \emph{Notation:} We will use throughout the rest of the paper the notation $k_\G$ to refer at this constant in the triangle inequality. 
This theorem allows us to define a good notion of pseudo-Riemannian critical exponent for quasi-Fuchsian groups:
\begin{defi}\label{def- critical exponent}
We call critical exponent the real number defined by:
$$\delta_{\h^{p,q}}(\G):=  \limsup_{R\tv \infty} \frac{1}{R} \log \Card \{ \g\in \G \, |\, d_{\h^{p,q}}(\g o , o )\leq R\},$$
for any point $o\in C(\Lambda)$. It does not depend on $o$ thanks to Theorem \ref{th - triangle inequality}. 
\end{defi}

Another straightforward consequence is that if $\G_0\subset \G$ is a finite index subgroup, then $\delta_{\h^{p,q}}(\G_0)=\delta_{\h^{p,q}}(\G)$. 
} }

\subsection{Limit set} \label{subsec - limit set}

\begin{lemma}\label{lem - Two space like rays with the same endpoint are at bounded distance.}
Two spacelike rays with the same endpoint are at bounded distance.
\end{lemma}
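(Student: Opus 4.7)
The plan is to compute the distance directly from the explicit parametrization of spacelike rays given in the paragraph ``Geodesics and $\partial\AdS^{n+1}$''. Let the two rays be $[x\xi)$ and $[y\xi)$ with common endpoint $\xi \in \partial\AdS^{n+1}$ (so $\pscal{x}{\xi}<0$ and $\pscal{y}{\xi}<0$). Arc-length parametrizations are
\[
f(s) = e^{-s}x + \frac{\sinh s}{|\pscal{x}{\xi}|}\xi,\qquad
g(s) = e^{-s}y + \frac{\sinh s}{|\pscal{y}{\xi}|}\xi.
\]
By Proposition~\ref{prop - formula for the distance}, $d(f(s),g(s))$ equals $\Ach|\pscal{f(s)}{g(s)}|$ whenever $\pscal{f(s)}{g(s)}<-1$, and vanishes otherwise. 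So the whole task reduces to controlling the scalar product $\pscal{f(s)}{g(s)}$ from below.

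Using bilinearity and $\pscal{\xi}{\xi}=0$, a direct expansion gives
\[
\pscal{f(s)}{g(s)} = e^{-2s}\pscal{x}{y} - e^{-s}\sinh(s)\left(\frac{|\pscal{x}{\xi}|}{|\pscal{y}{\xi}|}+\frac{|\pscal{y}{\xi}|}{|\pscal{x}{\xi}|}\right).
\]
Since $e^{-2s}\to 0$ and $e^{-s}\sinh s \to \tfrac12$ as $s\to\infty$, this scalar product converges to $L = -\tfrac{1}{2}(r + r^{-1})$, where $r = |\pscal{x}{\xi}|/|\pscal{y}{\xi}|$. By the arithmetic--geometric mean inequality, $|L| \geq 1$, so $d(f(s),g(s)) \to \Ach|L|$, a finite value.

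To conclude, I would combine this limit with the fact that $s \mapsto d(f(s),g(s))$ is bounded on any compact subset of $[0,\infty)$: the function $s\mapsto \pscal{f(s)}{g(s)}$ is continuous, and on the set where the scalar product lies in $(-\infty,-1)$ the distance is $\Ach|\pscal{f(s)}{g(s)}|$, while on the complementary set the distance is zero. In either case the distance is locally bounded in terms of the scalar product, so $\sup_{s\geq 0} d(f(s),g(s)) < \infty$, which is the statement.

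The whole proof is essentially the single bilinear computation above, and there is no real obstacle. The only point requiring a moment of care is that the distance $d$ is a piecewise function of $\pscal{f(s)}{g(s)}$ (vanishing on the causal regime), but this only makes the upper bound easier.
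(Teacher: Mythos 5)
Your proof is correct and follows essentially the same route as the paper: parametrize both rays, expand the bilinear form using $\pscal{\xi}{\xi}=0$, and observe that the resulting inner product $\pscal{f(s)}{g(s)}$ stays bounded. The one difference is that the paper's proof additionally extracts a bound that is \emph{uniform} over base points $x,y$ in a compact fundamental domain $K$ and $\eta\in\Lambda$ (this uniformity is implicitly used later, e.g.\ in the proof of Lemma~\ref{lem - la structure produit de Bowen margulis permet de se restreindre a PS}), whereas your argument gives the pointwise bound that the statement literally asserts; the underlying computation is identical.
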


\begin{proof}
Let $x, y\in \h^{p,q}$ and $\eta\in \partial \h^{p,q}$. We consider the parametrization of the geodesic rays $[x\eta)$ and $[y\eta)$:
\[ \tx(t)=e^{-t} \tx - \frac{\sinh t}{\langle \tx\vert\teta\rangle_{p,q+1}} \teta  \quad \text{and} \quad \ty(t)=e^{-t} \ty - \frac{\sinh t}{\langle \ty\vert\teta\rangle_{p,q+1}} \teta. \]
We estimate  $d(x(t),y(t))=\Ach \vert\langle \tx(t)\vert \ty(t)\rangle_{p,q+1}\vert$, looking at:
\begin{align*}
 \langle \tx(t) \vert \ty(t)\rangle_{p,q+1} &= \langle e^{-t} \tx - \frac{\sinh t}{\langle \tx\vert\teta\rangle_{p,q+1}} \teta \vert e^{-t} \ty - \frac{\sinh t}{\langle \ty\vert\teta\rangle_{p,q+1}} \teta \rangle_{p,q+1} \\
 &= e^{-2t} \langle \tx\vert \ty\rangle_{p,q+1} - e^{-t} \sinh t \left( \frac{\langle \tx\vert\teta\rangle_{p,q+1}}{\langle \ty\vert \teta\rangle_{p,q+1}}+\frac{\langle \tx\vert\teta\rangle_{p,q+1}}{\langle \ty\vert \teta\rangle_{p,q+1}} \right).
 \end{align*}
This function is bounded since $\lim_{\infty} e^{-t} \sinh t = 1/2$. (Here, $x$,$y$ and $\eta$ are fixed) 
\end{proof}

We follow the usual definition of Buseman functions, in the hyperbolic case they are deeply studied in \cite{ballmann1985manifolds}
\begin{defi}
The Buseman function centered at $\xi\in \Lambda$, is the function $\beta_\xi$ on $C(\Lambda)^2$ defined by:
$$ \forall x, y \in C(\Lambda)~~\beta_{\xi} (x,y)= \ln \left( \left|\frac{\pscal{\txi}{ \tx}_{p,q+1}}{\pscal{\txi }{\ty}_{p,q+1}}\right|\right).$$
\end{defi}

\begin{lemme}
$$\lim_{z\tv \xi} d_{\h^{p,q}}(z,x)-d_{\h^{p,q}}(z,y) =\beta_\xi (x,y).$$
\end{lemme}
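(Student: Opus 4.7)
The plan is a direct computation using the explicit formula $d(z,w)=\mathrm{argcosh}|\langle z\vert w\rangle|$ from Proposition 3.2 together with the asymptotic $\mathrm{argcosh}(t)=\ln(2t)+o(1)$ as $t\to\infty$.

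First I would fix a sequence $z_i\in C(\Lambda)$ with $z_i\to\xi$. By the description of convergence to the boundary given in the paper, there exist $\lambda_i>0$ and a representative $\xi_0$ of the half-line $\xi$ such that $\lambda_i z_i\to\xi_0$ in $\R^{n+2}$. Since $x,y\in E(\Lambda)$ and $\xi\in\Lambda$, both $\langle\xi_0\vert x\rangle$ and $\langle\xi_0\vert y\rangle$ are strictly negative (this is what it means for $x,y$ to lie in the black domain, and it is also the non-vanishing needed so that $\beta_\xi(x,y)$ is well-defined). In particular $\langle z_i\vert x\rangle=\frac{1}{\lambda_i}\langle\lambda_i z_i\vert x\rangle\to-\infty$, and similarly for $y$; so for $i$ large enough both $\langle z_i\vert x\rangle<-1$ and $\langle z_i\vert y\rangle<-1$, which by Proposition 3.2 guarantees that $z_i$ is joined to both $x$ and $y$ by spacelike geodesics and that the distances are given by $d(z_i,x)=\mathrm{argcosh}|\langle z_i\vert x\rangle|$ and $d(z_i,y)=\mathrm{argcosh}|\langle z_i\vert y\rangle|$.

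Next I would use $\mathrm{argcosh}(t)=\ln\bigl(t+\sqrt{t^2-1}\bigr)=\ln(2t)+o(1)$ as $t\to\infty$. Subtracting the two expressions, the $\ln 2$ cancels and one obtains
\[ d(z_i,x)-d(z_i,y)=\ln\frac{|\langle z_i\vert x\rangle|}{|\langle z_i\vert y\rangle|}+o(1). \]
The ratio inside the logarithm is scale-invariant in $z_i$, hence equal to $\frac{|\langle\lambda_i z_i\vert x\rangle|}{|\langle\lambda_i z_i\vert y\rangle|}$, which by continuity of the bilinear form tends to $\frac{|\langle\xi_0\vert x\rangle|}{|\langle\xi_0\vert y\rangle|}$. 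Since both scalar products have the same (negative) sign, this equals $\frac{\langle\xi_0\vert x\rangle}{\langle\xi_0\vert y\rangle}$, and this ratio is independent of the chosen representative of $\xi$. Passing to the limit therefore gives
\[ \lim_{i\to\infty}\bigl(d(z_i,x)-d(z_i,y)\bigr)=\ln\frac{\langle\xi\vert x\rangle}{\langle\xi\vert y\rangle}=\beta_\xi(x,y), \]
as required.

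There is no real obstacle here; the only points that require a moment of care are (i) verifying that the approach $z_i\to\xi$ forces $|\langle z_i\vert x\rangle|\to\infty$ rather than staying bounded or tending to zero, which is where one uses $x\in E(\Lambda)$ and the non-degeneracy $\langle\xi_0\vert x\rangle\neq 0$, and (ii) tracking the signs so that the limit is written without absolute values as in the definition of the Busemann function.
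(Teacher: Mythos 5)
Your proof is correct and follows essentially the same approach as the paper's: write $d(z,\cdot)=\argch|\langle z\vert\cdot\rangle|$, observe that $|\langle z\vert x\rangle|,|\langle z\vert y\rangle|\to\infty$ as $z\to\xi$ because $x,y$ lie in the black domain (so $\langle\xi\vert x\rangle,\langle\xi\vert y\rangle\neq 0$), and use $\argch t=\ln(2t)+o(1)$ so the difference converges to $\ln\frac{|\langle\xi\vert x\rangle|}{|\langle\xi\vert y\rangle|}=\beta_\xi(x,y)$. Your writeup is a bit more careful about the scale-invariance of the ratio and the sign bookkeeping, but the argument is the same.
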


\begin{proof}
Let $\xi\in \Lambda$, $x\in C(\Lambda)$ and $z_i\in C(\Lambda) $ such that $\lim_{i\tv \infty} z_i =\xi$.
As we have seen in Lemma \ref{lem - balls are compact}, $\lim_{i\tv \infty}\langle \tx | \tz_i\rangle_{p,q+1}=+\infty$. \\
In particular,  for $i$ large enough, the geodesics joining $z_i$ with $x$ and $y$ are spacelike,  and 
$$d_{\h^{p,q}}(z_i,x)-d_{\h^{p,q}}(z_i,y) = \Ach(|\pscal{\tz_i}{\tx}_{p,q+1}| )- \Ach(|\pscal{\tz_i}{\tx}_{p,q+1}|).$$
By simple calculus we have $\Ach(t)= \ln(2t) +o(1/t)$ as $t\tv \infty$. 
Hence $d_{\h^{p,q}}(z_i,x)-d_{\h^{p,q}}(z_i,y) = \ln(|\pscal{\tz_i}{\tx}_{p,q+1}|) - \ln (|\pscal{\tz_i}{\tx}_{p,q+1}|) + o(\frac{1}{\|\tz_i\|_{e}})$ and finally we have: 
\begin{eqnarray*}
\lim_{i\tv \infty} d_{\h^{p,q}}(z_i,x)-d_{\h^{p,q}}(z_i,y)  &=& \lim_{k\tv \infty} \ln\left(\frac{|\pscal{\tz_i}{\tx}_{p,q+1}|  }{|\pscal{\tz_i}{\ty}_{p,q+1}|  } \right)\\
									&=& \ln\left(\frac{|\pscal{\txi}{\tx_i}_{p,q+1}|  }{|\pscal{\txi}{\ty_i}_{p,q+1}| } \right)\\
									&=& \beta_\xi(x,y).
\end{eqnarray*}
\end{proof}

%
%

\subsection{Shadows}

\begin{defi} Let $x,y\in C(\Lambda)$, and $r>0$. The shadow $\mathcal S_r(x,y)$ is $\{\xi\in \Lambda\,  \vert\,  [x,\xi)\cap B_{C(\Lambda)}(y,r) \ne \emptyset\}$, where $B_{C(\Lambda)}(y,r)$ is the Lorentzian ball.

\end{defi}

\emph{Remark:} This is slightly different from the usual definition of shadows as we require that points in shadows lie on the limit set.

The following lemma is classical in the setting of CAT($-1$) spaces, compare for example with \cite[Lemma 4.1]{quint2006overview}.
\begin{lemma} \label{shadow_1} Let $x,y\in C(\Lambda)$ and $r>0$. For all $\xi\in \mathcal S_r(x,y)$, one has:
\[ d_{\h^{p,q}}(x,y)-2r-2k_\G \le \beta_\xi(x,y)\le d_{\h^{p,q}}(x,y)+k_\G . \]
Recall that $k_\G$ is the constant in the triangle inequality \ref{th - triangle inequality}.
\end{lemma}

\begin{proof} Let $x(t)$ be the geodesic such that $x(0)=x$ and $x(+\infty)=\xi$. By definition of the shadow, there is $t_0\ge 0$ such that $d_{\h^{p,q}}(y,x(t_0))<r$.\\
Since $x(+\infty)=\xi$, one has $\beta_\xi(x,y)=\lim_{t\to +\infty} d_{\h^{p,q}}(x,x(t))-d_{\h^{p,q}}(y,x(t))$.\\

\[d_{\h^{p,q}}(y,x(t))\le d_{\h^{p,q}}(y,x(t_0))+d_{\h^{p,q}}(x(t_0),x(t))+k_\G \]

If $t\ge t_0$, one has $d_{\h^{p,q}}(x(t_0),x(t))=d_{\h^{p,q}}(x,x(t))-d_{\h^{p,q}}(x,x(t_0))$, hence

\[ d_{\h^{p,q}}(y,x(t)) \le r + d_{\h^{p,q}}(x,x(t)) - d_{\h^{p,q}}(x,x(t_0)) +k_\G\]

\[ d_{\h^{p,q}}(x,x(t))-d_{\h^{p,q}}(y,x(t)) \ge d_{\h^{p,q}}(x,x(t_0)) - r - k_\G \]

However, $d_{\h^{p,q}}(x,x(t_0))\ge d_{\h^{p,q}}(x,y) - d_{\h^{p,q}}(y,x(t_0))-k_\G$, hence:

\[ d_{\h^{p,q}}(x,x(t))-d_{\h^{p,q}}(y,x(t)) \ge d_{\h^{p,q}}(x,y) -2(r+k_\G)\]

Letting $t\to +\infty$ gives the left hand side of the desired inequality.\\
For the right hand side, simply notice that $d_{\h^{p,q}}(x,x(t))-d_{\h^{p,q}}(y,x(t))\le d_{\h^{p,q}}(x,y) +k_\G$.

\end{proof}

\subsection{Radial convergence}

\begin{defi}
A point $\xi\in\Lambda$ is radial if there exists a sequence  $g_i\in \G$ and a point $x$ such that  $g_i x$ converges to $\xi$ and is at bounded distance of one (any) spacelike ray with endpoint $\xi$. In that case we say that the sequence $g_i x$ converges radially to $\xi$. 
\end{defi}
\begin{lemma}
If $\xi $ is radial there exists a sequence $\g_i \in \G$ and a constant $L>0$ such that :
$$\left| \beta_{\xi}(x,\g_i x ) -d_{\h^{p,q}}(x,\g_i x)\right| \leq L.$$
\end{lemma}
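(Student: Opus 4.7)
The plan is to exploit the radial convergence directly by projecting each $\gamma_i x$ onto the spacelike ray $[x\xi)$ and using the triangle inequality with additive defect $k$ from Theorem \ref{th - triangle inequality} to estimate both $d(x,\gamma_i x)$ and $\beta_\xi(x,\gamma_i x)$ in terms of the projection parameter.

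First, using the definition of radial convergence together with Lemma \ref{lem - Two space like rays with the same endpoint are at bounded distance.}, I fix the particular spacelike ray $[x\xi)$ starting at $x$ (well defined because $x \in C(\Lambda) \subset E(\Lambda)$ gives $\pscal{x}{\xi} < 0$) and I obtain a constant $D > 0$ and points $z_i = x(t_i)$ on that ray such that $d(\gamma_i x, z_i) \le D$ for every $i$, with $t_i \to +\infty$.

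Next I estimate $d(x, \gamma_i x)$. Applying Theorem \ref{th - triangle inequality} in both directions,
\[
t_i - D - k \;\le\; d(x, \gamma_i x) \;\le\; t_i + D + k,
\]
where the upper bound is the direct triangle inequality through $z_i$, and the lower bound follows from $d(x, z_i) \le d(x, \gamma_i x) + d(\gamma_i x, z_i) + k$.

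Then I estimate $\beta_\xi(x, \gamma_i x)$ via the formula $\beta_\xi(x, \gamma_i x) = \lim_{T \to \infty} d(x(T), x) - d(x(T), \gamma_i x)$. For $T > t_i$ one has $d(x(T), x) = T$ and $d(x(T), z_i) = T - t_i$ since both points lie on the same spacelike geodesic, so another application of the triangle inequality through $z_i$ yields
\[
(T - t_i) - D - k \;\le\; d(x(T), \gamma_i x) \;\le\; (T - t_i) + D + k.
\]
Subtracting from $T$ and letting $T \to +\infty$ gives $|\beta_\xi(x, \gamma_i x) - t_i| \le D + k$.

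Combining the two estimates yields $|\beta_\xi(x, \gamma_i x) - d(x, \gamma_i x)| \le 2(D + k)$, so $L := 2(D+k)$ works. There is no real obstacle here; the only mild subtlety is that one must invoke Lemma \ref{lem - Two space like rays with the same endpoint are at bounded distance.} to reconcile the phrase \emph{one (any)} spacelike ray in the definition with the specific ray $[x\xi)$ that appears in the computation of $\beta_\xi$.
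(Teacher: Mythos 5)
Your argument is correct, and it is essentially the paper's argument unrolled: the paper simply observes that $d(\gamma_i x,[x\xi))\leq L$ means $\xi\in\mathcal S_L(x,\gamma_i x)$ and then cites Lemma~\ref{shadow_1}, whose proof is exactly the projection-onto-the-ray computation with additive defect $k$ that you perform inline. The only stylistic difference is that you re-derive the shadow estimate instead of invoking it.
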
 

\begin{proof}
If $\xi$ is radial, by definition there exists a sequence $\g_i\in \G$, a point $x\in \ADS^{n+1}$  and a constant $L$ such that the sequence $d_{\h^{p,q}}(\g_i x , [x\xi) ) \leq L$. This means that $\xi\in \mathcal{S}_L(x,\g_i x) $. Conclude by Lemma \ref{shadow_1}.
\end{proof}

Recall from   Proposition \ref{prop -north south dynamic} that any element $\g\in \G\setminus\{Id\}$ has exactly two fixed points in $\Lambda$ denoted by $\g^\pm$.
\begin{lemma}
For any $\g\in\G\setminus\{Id\}$, the point $\g^+$ is radial. 
\end{lemma}

\begin{proof}
 For any point $x$ on the axis of $\g$, the sequence $\g^i x $ converges radially to $\g^+$. 
\end{proof}
A stronger statement is true:
\begin{lemma}
Every  point of $\Lambda$ is radial. 
\end{lemma}

\begin{proof}
This is due to the cocompactness of the action on the convex core. Let $x\in C(\Lambda)$ and $\xi \in \Lambda$.  Let $K$ be a compact fundamental domain for the action of $\G$ on $C(\Lambda)$. The geodesic ray $[x,\xi)$ is covered by a sequence $(g_i K)$ of translates of $K$ \cite[Lemma 7.5]{merigot2012anosov}. Then the sequence $g_i x$ converges radially to $\xi. $
\end{proof}

\begin{lemma} \label{lemma_description_limit_set}
We choose an enumeration $\Gamma=\{\gamma_p : p\in \N\}$ of $\Gamma$.
\[\Lambda = \bigcup_{r>0}\bigcap_{N\in \N}\bigcup_{p\geq N} \mathcal S_r(x,\gamma_p.x)\]
\end{lemma}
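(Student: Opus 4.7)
The plan is to prove the two inclusions separately; both follow almost directly from the two preceding lemmas together with the definition of a shadow.

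For $\supseteq$, observe that by definition $\mathcal{S}_r(x, \gamma_p \cdot x) \subseteq \Lambda$ for every $r > 0$ and every $p$, so the right-hand side is automatically contained in $\Lambda$.

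For $\subseteq$, fix $\xi \in \Lambda$. The previous lemma gives that $\xi$ is radial, and in fact its proof produces an explicit sequence: choosing a compact fundamental domain $K \subseteq C(\Lambda)$ containing $x$, one obtains $(g_i) \subseteq \Gamma$ with $g_i x \to \xi$ in $\overline{\AdS^{n+1}}$ and $d(g_i x, [x,\xi)) \leq L$ for some constant $L > 0$ (note that $[x,\xi)$ is a spacelike ray because $\langle x, \xi\rangle < 0$ for $x \in C(\Lambda) \subseteq E(\Lambda)$). By the very definition of shadows this yields $\xi \in \mathcal{S}_L(x, g_i x)$ for every $i$. Since $g_i x \to \xi \in \partial\AdS^{n+1}$, the points $g_i x$ must leave every compact subset of $\AdS^{n+1}$, so the elements $g_i$ are eventually pairwise distinct. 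Writing $g_i = \gamma_{p_i}$ in the fixed enumeration of $\Gamma$, the indices $p_i$ therefore tend to infinity, so for every $N \in \N$ some $i$ satisfies $p_i \geq N$ and $\xi \in \mathcal{S}_L(x, \gamma_{p_i} x)$. We conclude that $\xi$ lies in $\bigcap_{N} \bigcup_{p \geq N} \mathcal{S}_L(x, \gamma_p x)$, which is contained in the right-hand side.

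I do not anticipate any genuine obstacle: the lemma is essentially a reformulation of \emph{every limit point is radial} through the dictionary between radial sequences and shadows already encoded in the first shadow estimate. The only minor point to verify is that the indices of the chosen orbit sequence are cofinal in the enumeration, which amounts to noting that a sequence converging to a boundary point cannot repeat any element of $\Gamma$ infinitely often.
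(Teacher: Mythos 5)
Your proposal is correct and follows essentially the same route as the paper: the reverse inclusion is immediate from the definition of shadows, and the forward inclusion uses that every limit point is radial to extract a sequence $(g_i)$ with $\xi \in \mathcal{S}_L(x, g_i x)$ for all $i$. Your added remark that $g_i x \to \xi$ forces the $g_i$ to be eventually pairwise distinct (so their indices in the enumeration are cofinal) makes explicit a point the paper leaves implicit when it reduces the claim to ``there is an infinite number of elements $\gamma$ with $\xi\in\mathcal S_r(x,\gamma.x)$.''
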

\begin{proof} The fact that $\Lambda \supset \bigcup_{r>0}\bigcap_{N\in \N}\bigcup_{p\geq N} \mathcal S_r(x,\gamma_p.x)$ comes from the definition of shadows (they are subsets of $\Lambda$).\\
 Let $\xi\in \Lambda$. We wish to find $r>0$ such that $\xi \in \bigcap_{N\in \N}\bigcup_{p\geq N} \mathcal S_r(x,\gamma_p.x)$, i.e. such that there is an infinite number of elements $\gamma_i\in \Gamma$ such that $\xi\in \mathcal S_r(x,\gamma_i.x)$. For this, we choose a sequence $g_i\in \Gamma$ such that $g_i$ converges radially to $\xi$. Let $r>0$ be such that all $g_i.x$ are at distance at most $r$ from the half geodesic $[x\xi)$. We then have $\xi \in \mathcal S_r(x,g_i.x)$ for all $i\in \N$, hence the result.
\end{proof}

\subsection{Gromov distance} \label{subsec - gromov distance}

We denote by $\Lambda^{(2)}$ the pairs of distinct points of $\Lambda$.\\
 The Gromov product of three points $x,y,z\in C(\Lambda)$ is: \[(x\vert y)_z=\frac{1}{2}(d_{\h^{p,q}}(x,z)+d_{\h^{p,q}}(y,z)-d_{\h^{p,q}}(x,y)).\]  It extends to a continuous function on $\Lambda^{(2)}\times C(\Lambda)$, and we have $\forall\xi,\eta\in\Lambda^{(2)}\, , \forall x\in C(\Lambda)$: \[(\xi |\eta)_x=\frac{1}{2}\ln\left|\frac{2\langle\txi\vert \tx\rangle_{p,q+1}\langle \tx\vert\teta\rangle_{p,q+1}}{\langle \txi\vert\teta\rangle_{p,q+1}}\right| .\]
Note that $(\xi |\eta)_x=\frac{1}{2}(\beta_\xi(x,y)+\beta _\eta(x,y))$ for any $y\in (\xi\eta)$.\\
 For $x\in C(\Lambda)$ and $\xi,\eta \in \Lambda^{(2)}$, we set $d_x(\xi,\eta)=e^{-(\xi\vert\eta)_x}$. The explicit formula is:
 \[ d_x(\xi,\eta)=\sqrt{ \left| \frac{\langle \txi\vert\teta\rangle_{p,q+1}}{2\langle\txi\vert \tx\rangle_{p,q+1}\langle \tx\vert\teta\rangle_{p,q+1}} \right|}.\]
 {\color{black} Since $\Lambda$ is acausal, the term in the absolute value is always negative, so
 \[ d_x(\xi,\eta)=\sqrt{ \frac{-\langle \txi\vert\teta\rangle_{p,q+1}}{2\langle\txi\vert \tx\rangle_{p,q+1}\langle \tx\vert\teta\rangle_{p,q+1}} }.\]}
 
 Note that we always have $d_x(\xi,\eta)\leq 1$, and $d_x(\xi,\eta)=1$ if and only if $x\in (\xi\eta)$. Indeed, if $x\in C(\Lambda)$ and $\xi,\eta\in \Lambda$, then the affine subspace spanned by $x,\xi,\eta$ is a totally geodesic copy of $\h^2$, so this follows from the fact that in $\h^2$, the distance $d_x$ is half of the chordal distance, when $x$ is seen as the centre of the unit disk.
 
We remark that for all $y,z,x,x'$, we have $|(z|y)_x-(z|y)_{x'}| \leq 2d_{\h^{p,q}}(x,x')+2k_\G$, so  the functions $d_x$ and $d_{x'}$ are bi-Lipschitz equivalent.

The function $d_x$ is symmetric and $d_x(\xi,\eta)=0\iff \xi=\eta$, however it is not necessarily a distance. Just as for the Lorentzian distance on $C(\Lambda)$, we have a weak form of the triangle inequality which will be of some use.
 
 \begin{lemma} \label{triangle_boundary} There is a constant $\lambda_\G\ge 1$ such that:
\[ \forall x\in C(\Lambda)~ \forall \xi,\eta,\tau \in \Lambda ~ d_x(\xi,\eta)\le \lambda_\G (d_x(\xi,\tau) + d_x(\tau, \eta)).\]
\end{lemma}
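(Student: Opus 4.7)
The plan is to establish the stronger ultrametric-type inequality $d_x(\xi,\eta) \leq \mu \max(d_x(\xi,\tau), d_x(\tau,\eta))$ for some $\mu \geq 1$; since $\max(a,b) \leq a+b$ for non-negative reals, this implies the lemma with $\lambda = \mu$. Taking logarithms, this is equivalent to a quasi-hyperbolicity inequality for the Gromov product,
\[ (\xi|\eta)_x \geq \min\bigl((\xi|\tau)_x, (\tau|\eta)_x\bigr) - C, \]
for some constant $C \geq 0$ independent of $x \in C(\Lambda)$ and $\xi, \eta, \tau \in \Lambda$. This is the classical four-point condition of Gromov hyperbolicity, transported to the boundary via the continuous extension of $(\cdot|\cdot)_x$ to $\Lambda^{(2)} \times C(\Lambda)$ noted before the lemma.

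First I would use the $\Gamma$-invariance of the Gromov product and the cocompactness of the $\Gamma$-action on $C(\Lambda)$ to reduce to the case $x \in K$ for a compact fundamental domain $K$. The inequality is trivial when $\tau \in \{\xi, \eta\}$ (the ratio is one), and for $(\xi, \eta, \tau)$ varying over compact subsets of the pairwise-distinct configurations in $\Lambda^3$ the Gromov products are uniformly continuous and finite. The real content is therefore the case where both $(\xi|\tau)_x$ and $(\tau|\eta)_x$ become large, i.e.\ $\xi, \tau, \eta$ cluster together near some point of $\Lambda$.

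For this I would exploit three ingredients: (i) the approximate triangle inequality of Theorem \ref{th - triangle inequality}, which makes $(C(\Lambda), d)$ a quasi-metric space; (ii) the Gromov hyperbolicity of $\Gamma$ together with the topological conjugacy between its action on $\Lambda$ and on $\partial_\infty \Gamma$, from the Proposition attributed to \cite{merigot2012anosov}; (iii) the proper cocompact action of $\Gamma$ on $C(\Lambda)$. A quasi-metric version of the \v{S}varc--Milnor lemma then gives a quasi-isometry between $(C(\Lambda), d)$ and $\Gamma$ with any word metric, transferring Gromov hyperbolicity (up to additive constants) from $\Gamma$ to $(C(\Lambda), d)$. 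The four-point condition in $C(\Lambda)$ gives $(y|z)_x \geq \min((y|w)_x, (w|z)_x) - \delta$, and passing to limits $y \to \xi, z \to \eta, w \to \tau$ along radial sequences (using Lemma \ref{shadow_1} to control the Busemann/Gromov-product asymptotics) yields the desired boundary inequality with $C = \delta + O(k)$.

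The main obstacle is carrying out this Gromov-hyperbolicity argument rigorously given that $d$ is only a quasi-distance and even vanishes on pairs of causally related points: one cannot directly quote \v{S}varc--Milnor or standard $\delta$-hyperbolic lemmas. I expect the cleanest way is a case analysis based on whether each of the pairs among $x, y, z, w$ is spacelike or causally related, absorbing the additive slack $k$ from Theorem \ref{th - triangle inequality} at each step; the causally-related cases reduce, via the proof of Theorem \ref{th - triangle inequality}, to a compact configuration where continuity gives a uniform bound. Once the four-point condition is proved, exponentiating gives $d_x(\xi,\eta) \leq e^C \max(d_x(\xi,\tau), d_x(\tau,\eta))$, and the stated inequality holds with $\lambda = e^C$.
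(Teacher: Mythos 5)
Your plan is a genuinely different route from the paper's, and it carries a real gap that you identify but do not close. The paper does not attempt to transfer Gromov hyperbolicity from $\Gamma$ to $(C(\Lambda),d)$. Instead it considers the function
\[
F(x,\xi,\eta,\tau)=\frac{d_x(\xi,\tau)+d_x(\tau,\eta)}{d_x(\xi,\eta)}
\]
on $C(\Lambda)\times\Theta_3(\Lambda)$, observes that $F$ is $\Gamma$-invariant and that $\Gamma$ acts cocompactly on the space $\Theta_3(\Lambda)$ of distinct triples of $\Lambda$ (the cocompactness that matters here is on triples, via the topological conjugacy with the Gromov boundary, not on $C(\Lambda)$), and then uses the explicit algebraic expression of $F$ in terms of the scalar products $\langle\cdot\vert\cdot\rangle$ to show that $F$ cannot tend to $0$ along a sequence $x_k$ for fixed distinct $(\xi,\eta,\tau)$ (this would force $x_k\to\xi$ and $x_k\to\eta$ simultaneously). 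This is a short compactness-and-continuity argument that never needs Gromov hyperbolicity of $(C(\Lambda),d)$.

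The gap in your approach is the ``quasi-metric \v{S}varc--Milnor'' step. The function $d$ on $C(\Lambda)$ is not a metric: it vanishes on causally related pairs and only satisfies the triangle inequality up to an additive constant (Theorem \ref{th - triangle inequality}). \v{S}varc--Milnor, and the standard transfer of $\delta$-hyperbolicity to such an orbit space, require a proper geodesic (or at least length) metric space, which $(C(\Lambda),d)$ is not. You acknowledge this and propose an unspecified case analysis; but the cases where two of the four points are causally related are exactly where the interior Gromov product $(y|z)_x=\tfrac12(d(x,y)+d(x,z)-d(y,z))$ behaves anomalously (it can far exceed $\min(d(x,y),d(x,z))$ when $d(y,z)=0$), so the four-point inequality in $C(\Lambda)$ is not an immediate consequence of hyperbolicity of $\Gamma$ plus quasi-isometry. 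A second, more minor point: you reduce $x$ to a compact fundamental domain $K\subset C(\Lambda)$, but that does not tame the degeneration of $(\xi,\eta,\tau)$ in $\Lambda^3$, which is the genuine source of difficulty; the paper's reduction to a compact fundamental domain in $\Theta_3(\Lambda)$ is what actually eliminates that degeneration. Without supplying the thin-triangles/four-point estimate for the Lorentzian $d$ (or replacing it by the algebraic continuity argument), your proof does not yet go through.
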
 

\begin{proof} It is enough to show the inequality when $\xi,\eta,\tau$ are pairwise distinct ($\lambda_\G=1$ gives the inequality when it is not the case).\\

Consider the function $F:(x,\xi,\eta,\tau)\mapsto \frac{d_x(\xi,\tau)+d_x(\tau,\eta)}{d_x(\xi,\eta)}$ defined on $C(\Lambda)\times \Lambda^{(3)}$ where $\Lambda^{(3)}$ is the set of distinct triples of points of $\Lambda$. This function is $\Gamma$-invariant, and the action of $\Gamma$ on $\Lambda^{(3)}$ is co-compact, so it is enough to see that $x\mapsto F(x,\xi,\eta,\tau)$ is bounded from below for fixed $(\xi,\eta,\tau)\in \Lambda^{(3)}$.\\
Assume that it is not the case, then one can find a sequence $x_i\in C(\Lambda)$ such that $F(x_i,\xi,\eta,\tau)\to 0$. The expression of $F$ is:

\[ F(x,\xi,\eta,\tau) = \sqrt{\frac{\langle\xi\vert\tau\rangle_{p,q+1}\langle\eta\vert x\rangle_{p,q+1}}{\langle\xi\vert\eta\rangle_{p,q+1}\langle\tau\vert x\rangle_{p,q+1}}}+ \sqrt{\frac{\langle\eta\vert\tau\rangle_{p,q+1}\langle\xi\vert x\rangle_{p,q+1}}{\langle\xi\vert\eta\rangle_{p,q+1}\langle\tau\vert x\rangle_{p,q+1}}}.\]

The fact that $F(x_i,\xi,\eta,\tau)\to 0$ implies that $x_i\to \eta$ (first term of $F$) and $x_i\to \xi$ (second term of $F$). This is impossible because $\xi\ne \eta$.

\end{proof}

{\color{black}{
\begin{defi}
For $x\in C(\Lambda)$, $\xi\in \Lambda$ and $r>0$, we consider 
\[B_x(\xi,r)=\{\eta\in \Lambda | d_x(\xi, \eta)\leq r\}\] and call this set a ball on the boundary of $\h^{p,q}$.  
\end{defi}

}}
{\color{black}{\emph{Geometric interpretation of $d_x$:} For $x\in C(\Lambda)$, we have defined the pseudo-spherical domain to be $\partial U(x) = \{\xi\in \partial \h^{p,q} \, |\, \langle x| \xi\rangle_{p,q+1} \neq \emptyset\}$ and seen that it is isometric to the pseudo-Riemannian sphere $\Ss^{p-1,q}$.

Let $\tx=(0,...,0,0,1)$ and $\hat{\xi}, \hat{\eta}$ two points of $\Ss^{p-1,q}\subset \R^{p,q}$. We can consider $\txi = (\hat{\xi},1)$ and $\teta = (\hat{\eta},1)$ the associated points of $\R^{p,q+1}$ lying in $\partial \mathcal H^{p,q}$. Finally let $\xi,\eta$ be the corresponding images in $\partial \h^{p,q}$.
\begin{align*}d_x( \xi, \eta) &= \sqrt{ \frac{-\langle \txi\vert\teta\rangle_{p,q+1}}{2\langle\txi\vert \tx\rangle_{p,q+1}\langle \tx\vert\teta\rangle_{p,q+1}} }\\ 
&=\sqrt{\frac{1}{2}(1- \langle\hat{\xi}| \hat{\eta}\rangle_{p,q})}\\
&= \frac{1}{2} \sqrt{\pscal{\hat\xi-\hat\eta}{\hat\xi-\hat\eta}_{p,q}}.
\end{align*}
This shows that the balls on the boundary are intersections of $\Lambda$ with interiors of quadrics of signature $(p-1,q)$.\\


 }}

Given $x\in C(\Lambda)$ and $\xi,\eta\in \Lambda$, the quantity $d_x(\xi,\eta)$ can be computed from the lengths of the side of any triangle whose vertices are $x$, a point of $[x\xi)$ and a point of $[x\eta)$. We want to stress that the following lemma can be seen as a purely hyperbolic geometry result, since all the points are on a unique $\Hyp^2\subset \h^{p,q}$.
\begin{lemma} \label{computation_distance_boundary} Let $x,y,z\in C(\Lambda)$ and $\xi,\eta\in \Lambda$.  If  $y\in [x\eta)$ and $z\in [x\xi)$, then:
\[ d_x(\xi,\eta)^2=\frac{\cosh d_{\h^{p,q}}(y,z)- \cosh(d_{\h^{p,q}}(x,y)-d_{\h^{p,q}}(x,z))}{2\sinh d_{\h^{p,q}}(x,y) \sinh d_{\h^{p,q}}(x,z)}.\]
\end{lemma}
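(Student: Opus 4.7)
The plan is to do a direct computation using the explicit parametrizations of spacelike geodesics given earlier in the paper. Recall that for $x \in \AdS^{n+1}$ and $\xi \in \partial\AdS^{n+1}$ with $\pscal{x}{\xi}<0$, the spacelike geodesic $[x\xi)$ can be parametrized by arc length as
\[ f(s) = e^{-s} x + \frac{\sinh s}{|\pscal{x}{\xi}|}\xi. \]
Writing $t = d(x,y)$ and $u = d(x,z)$, we therefore have
\[ y = e^{-t} x + \frac{\sinh t}{|\pscal{x}{\eta}|}\eta, \qquad z = e^{-u} x + \frac{\sinh u}{|\pscal{x}{\xi}|}\xi. \]

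Next I would expand $\pscal{y}{z}$ bilinearly. Using $\pscal{x}{x} = -1$, $\pscal{\xi}{\xi} = \pscal{\eta}{\eta} = 0$, and the identities $\pscal{x}{\eta} = -|\pscal{x}{\eta}|$, $\pscal{x}{\xi} = -|\pscal{x}{\xi}|$ (which hold since $x \in C(\Lambda) \subset E(\Lambda)$), the four expanded terms reduce to
\[ \pscal{y}{z} = -e^{-(t+u)} - e^{-t}\sinh u - e^{-u}\sinh t + \frac{\sinh t \sinh u}{|\pscal{x}{\eta}||\pscal{x}{\xi}|}\pscal{\xi}{\eta}. \]
The first three terms collapse to $-\cosh(t-u)$ by a short manipulation with hyperbolic functions, so
\[ \pscal{y}{z} = -\cosh(d(x,y)-d(x,z)) + \frac{\sinh t \sinh u}{|\pscal{x}{\eta}||\pscal{x}{\xi}|}\pscal{\xi}{\eta}. \]

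Since $\pscal{\xi}{\eta}<0$ and $\sinh t, \sinh u \geq 0$, this quantity is $\leq -1$, so $y$ and $z$ are joined by a spacelike geodesic and $\cosh d(y,z) = |\pscal{y}{z}| = -\pscal{y}{z}$ by Proposition \ref{prop - formula for the distance}. Substituting and rearranging gives
\[ \cosh d(y,z) - \cosh(d(x,y)-d(x,z)) = -\frac{\sinh d(x,y) \sinh d(x,z)}{\pscal{x}{\eta}\pscal{x}{\xi}}\pscal{\xi}{\eta}, \]
where I used $|\pscal{x}{\eta}||\pscal{x}{\xi}| = \pscal{x}{\eta}\pscal{x}{\xi}$. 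Dividing by $2\sinh d(x,y)\sinh d(x,z)$ yields exactly the expression $\frac{-\pscal{\xi}{\eta}}{2\pscal{\xi}{x}\pscal{x}{\eta}}$, which is $d_x(\xi,\eta)^2$ by the explicit formula established just before the lemma.

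There is no real obstacle here: the proof is essentially a bookkeeping exercise in which the only subtle step is recognizing the cosh-of-a-difference identity $-e^{-(t+u)} - e^{-t}\sinh u - e^{-u}\sinh t = -\cosh(t-u)$, which makes the formula fall out cleanly.
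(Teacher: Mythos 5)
Your proof is correct and follows essentially the same route as the paper: parametrize the two spacelike rays $[x\eta)$ and $[x\xi)$, expand $\pscal{y}{z}$ bilinearly, collapse the three $\xi\eta$-free terms to $-\cosh(d(x,y)-d(x,z))$, and identify the remaining term with $-2d_x(\xi,\eta)^2\sinh d(x,y)\sinh d(x,z)$ via the explicit formula for $d_x$. The only difference is the small extra observation that $\pscal{y}{z}\leq -1$, which justifies writing $\cosh d(y,z)=-\pscal{y}{z}$; the paper takes this for granted, so your version is marginally more careful but not substantively different.
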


\begin{proof}
We denote by $\xi(u)$ (resp. $\eta(u)$) the geodesic joining $x$ and $\xi$ (resp. $\eta$). The equations are:
\begin{equation*}
\txi(u) = e^{-u} \tx - \frac{\sinh u}{\langle \tx\vert \txi\rangle_{p,q+1}}\txi\quad \text{and}\quad \teta(u) = e^{-u} \tx - \frac{\sinh u}{\langle \tx\vert \teta\rangle_{p,q+1}}\teta.
\end{equation*}

Since we have $\ty=\teta(u)$ (where $u=d_{\h^{p,q}}(x,y)$) and $\tz=\txi(v)$ (where $v=d_{\h^{p,q}}(x,z)$), we find:
\begin{align*}
\cosh d_{\h^{p,q}}(y,z) &= |\langle \ty\vert \tz\rangle_{p,q+1} |\\
&=\left| \langle e^{-u} x - \frac{\sinh u}{\langle \tx\vert \teta\rangle_{p,q+1}}\teta \vert  e^{-v} \tx - \frac{\sinh v}{\langle \tx\vert \txi\rangle}\txi \rangle_{p,q+1}\right| \\
&= \left|e^{-u-v} - e^{-v}\sinh u - e^{-u} \sinh v + \sinh u \sinh v \frac{\langle \txi\vert\teta\rangle_{p,q+1}}{\langle \txi\vert \tx\rangle_{p,q+1} \langle \tx\vert \teta\rangle_{p,q+1}}\right|\\
&= |-\cosh(u-v) - 2d_x(\xi,\eta)^2 \sinh u \sinh v|\\
&= \cosh(d_{\h^{p,q}}(x,y)-d_{\h^{p,q}}(x,z)) + 2d_x(\xi,\eta)^2 \sinh d_{\h^{p,q}}(x,y) \sinh d_{\h^{p,q}}(x,z).
\end{align*}
\end{proof}

The two following technical lemmas compare balls on the boundary with shadows. 
\begin{coro} \label{ball_subset_shadow} Let $\xi\in \Lambda$ and $r\in (0,1)$. If $y\in [x\xi)$ is such that $d_{\h^{p,q}}(x,y)=-\ln r$, then $B_x(\xi,r)\subset \mathcal S_{\ln 6}(x,y)$.
\end{coro}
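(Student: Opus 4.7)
The plan is to take any $\eta \in B_x(\xi, r)$ and exhibit a point on $[x\eta)$ that lies within Lorentzian distance $\ln 6$ of $y$. The obvious candidate is the point $z \in [x\eta)$ with $d(x,z) = -\ln r$, so that $d(x,z) = d(x,y)$. The whole argument then reduces to a numerical estimate of $d(y,z)$ via Lemma \ref{computation_distance_boundary}.

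Applying that lemma (after swapping the roles of $\xi$ and $\eta$, which is legitimate since $d_x$ is symmetric) to the triple $(x, y, z)$, the equality $d(x,y) = d(x,z) = -\ln r$ makes $\cosh(d(x,y) - d(x,z)) = 1$, and one computes $\sinh(-\ln r) = \frac{1-r^2}{2r}$. The formula then becomes
\[ d_x(\xi, \eta)^2 = \frac{2 r^2 \bigl( \cosh d(y,z) - 1 \bigr)}{(1-r^2)^2}. \]
Combined with the hypothesis $d_x(\xi, \eta) \leq r$, this rearranges to $\cosh d(y,z) \leq 1 + \frac{(1-r^2)^2}{2} \leq \frac{3}{2}$ for every $r \in (0,1)$.

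Since $\cosh(\ln 6) = \frac{37}{12} > \frac{3}{2}$, we conclude that $d(y,z) \leq \ln 6$; hence $z \in B(y, \ln 6) \cap [x\eta)$, which by definition means $\eta \in \mathcal{S}_{\ln 6}(x, y)$. I expect no real obstacle in this argument: the only subtle point is ensuring that $y$ and $z$ are spacelike separated so that Lemma \ref{computation_distance_boundary} applies as stated, but this is immediate from the intermediate identity in its proof, which gives $\langle y | z \rangle = -1 - 2\, d_x(\xi,\eta)^2 \sinh^2(-\ln r) \leq -1$. The constant $\ln 6$ in the statement is in fact generous: the same argument works with any constant exceeding $\argch(3/2) \approx 0.96$, and $\ln 6$ is presumably chosen for the convenience of later estimates.
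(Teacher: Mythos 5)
Your proof is correct and follows the same route as the paper: pick $z \in [x\eta)$ with $d(x,z) = d(x,y)$, apply Lemma \ref{computation_distance_boundary}, and bound $\cosh d(y,z)$. You actually obtain the sharper bound $\cosh d(y,z) \le \tfrac{3}{2}$ (the paper settles for the cruder $\sinh(-\ln r) \le 1/r$, giving $\le 3$), and you helpfully verify the spacelike separation that the paper leaves implicit; both estimates land comfortably under $\ln 6$.
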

\begin{proof}
Let $\eta\in B_x(\xi,r)$, and let $z\in [x\eta)$ be such that $d(x,z)=d(x,y)=-\ln r$. We find $\cosh d_{\h^{p,q}}(y,z)=1+2(\sinh d_{\h^{p,q}}(x,y)d_x(\xi,\eta))^2\leq 3$, hence $d_{\h^{p,q}}(y,z)\leq \Ach\, 3 \leq \ln 6$, and $\eta\in \mathcal S_{\ln 6}(x,y)$.
\end{proof}

\begin{coro} \label{shadow_subset_ball} Let $\xi\in \Lambda$, $r\in (0,1)$ and $t>0$. If $y\in [x\xi)$ is such that $d_{\h^{p,q}}(x,y)=t+k+\frac{\ln 2}{2} -\ln\frac{r}{4} $, then $\mathcal S_t(x,y)\subset B_x(\xi,r)$.
\end{coro}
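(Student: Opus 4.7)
The plan is to run the computation underlying Lemma~\ref{computation_distance_boundary} at a point $z$ witnessing $\eta\in\mathcal{S}_t(x,y)$, and then bound the resulting expression using the explicit value of $d(x,y)$ given in the hypothesis.

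First I would unpack the definition: for $\eta\in\mathcal{S}_t(x,y)$ pick $z\in[x\eta)\cap C(\Lambda)$ with $d(y,z)\le t$. Set $u=d(x,y)$ and $v=d(x,z)$. The triangle inequality (Theorem~\ref{th - triangle inequality}) immediately gives $|u-v|\le d(y,z)+k\le t+k$. The algebraic identity extracted from the proof of Lemma~\ref{computation_distance_boundary},
\[ \langle y\vert z\rangle = -\cosh(u-v) - 2\,d_x(\xi,\eta)^2\,\sinh u\,\sinh v, \]
holds purely from the parametrizations of the rays $[x\xi)$ and $[x\eta)$, independently of whether $y$ and $z$ are spacelike related. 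Combined with the universally valid inequality $\langle y\vert z\rangle\ge -\cosh d(y,z)$ (an equality in the spacelike case, and reducing to $\langle y\vert z\rangle\ge -1$ in the causal case), it produces
\[ d_x(\xi,\eta)^2 \le \frac{\cosh d(y,z) - \cosh(u-v)}{2\sinh u\,\sinh v} \le \frac{\cosh t - 1}{2\sinh u\,\sinh v} \le \frac{e^t/2}{2\sinh u\,\sinh v}. \]

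Second, I would estimate the denominator using $2\sinh u\sinh v = \cosh(u+v)-\cosh(u-v)$. From $v\ge u-(t+k)$ and the explicit value of $u$ we obtain
\[ u+v \;\ge\; 2u-(t+k) \;=\; t+k+\ln 2+2\ln(4/r), \]
hence $e^{u+v}\ge 32\,e^{t+k}/r^2$ and $\cosh(u+v)\ge 16\,e^{t+k}/r^2$. Since $\cosh(u-v)\le\cosh(t+k)\le e^{t+k}$, for $r<1$ this gives $2\sinh u\sinh v\ge 15\,e^{t+k}/r^2$. Plugging back,
\[ d_x(\xi,\eta)^2 \;\le\; \frac{e^t/2}{15\,e^{t+k}/r^2} \;=\; \frac{r^2}{30\,e^k} \;\le\; r^2, \]
so $\eta\in B_x(\xi,r)$, as desired.

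I do not expect any serious obstacle: the statement is essentially calibrated so that the inequalities in the previous paragraphs come out cleanly. The only subtle point is that the formula in Lemma~\ref{computation_distance_boundary} is stated for points $y,z$ joined by a spacelike geodesic, whereas a priori the point $z$ in $B(y,t)$ might be causally related to $y$; this is harmless because the corollary only needs an upper bound on $d_x(\xi,\eta)$, and the generalised inequality $\langle y\vert z\rangle\ge -\cosh d(y,z)$ gives exactly that.
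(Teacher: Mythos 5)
Your argument is correct and follows the paper's proof closely: both derive the estimate from Lemma~\ref{computation_distance_boundary}, bound the numerator by $\cosh t-1$, and use $d(x,z)\geq d(x,y)-t-k$ together with the prescribed value of $d(x,y)$ to control the denominator (the paper via $\sinh u\geq e^u/4$ for $u\geq\tfrac{\ln 2}{2}$, you via $2\sinh u\sinh v=\cosh(u+v)-\cosh(u-v)$, which is equivalent arithmetic). Your caveat about a possibly causal pair $(y,z)$ is safe but in fact moot here: since $\xi\neq\eta$ and $d(x,y),d(x,z)>0$, the purely algebraic identity $\langle y\vert z\rangle=-\cosh(u-v)-2d_x(\xi,\eta)^2\sinh u\sinh v$ already forces $\langle y\vert z\rangle<-1$, so $y$ and $z$ are automatically spacelike related and the equality form of the lemma applies.
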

\begin{proof}
Let $y\in [x\xi)$ and $\eta\in \mathcal S_t(x,y)$. Given $z\in [x\eta)$  such that $d(z,y)<t$, we find:
\[ d_x(\xi,\eta)^2=\frac{\cosh d(y,z) - \cosh( d_{\h^{p,q}}(x,y)-d_{\h^{p,q}}(x,z))}{2\sinh d_{\h^{p,q}}(x,y) \sinh d_{\h^{p,q}}(x,z)} < \frac{e^t}{\sinh d_{\h^{p,q}}(x,y) \sinh d_{\h^{p,q}}(x,z)}. \]
Since $d_{\h^{p,q}}(x,z)\geq d_{\h^{p,q}}(x,y)-t-k$, we find that $d_{\h^{p,q}}(x,z)\geq \frac{\ln 2}{2}$, and $d_{\h^{p,q}}(x,y)\geq \frac{\ln 2}{2}$, hence $\sinh d_{\h^{p,q}}(x,y)\geq \frac{e^{d_{\h^{p,q}}(x,y)}}{4}$ and $\sinh d_{\h^{p,q}}(x,z)\geq \frac{e^{d_{\h^{p,q}}(x,z)}}{4}$ (here we use the fact that $u\geq \frac{\ln 2}{2}$ implies $\sinh u\geq \frac{e^u}{4}$). Finally,
\[ d_x(\xi,\eta)^2 < 16 e^{t-d_{\h^{p,q}}(x,y)-d_{\h^{p,q}}(x,z)}\leq 16 e^{2t+k-2d_{\h^{p,q}}(x,y)}\]
In order to have $\mathcal S_t(x,y)\subset B_x(\xi,r)$, it is enough to have $4e^{t+\frac{k}{2}} e^{-d_{\h^{p,q}}(x,y)}\leq r$, which is guaranteed by the condition on $d_{\h^{p,q}}(x,y)$.
\end{proof}

%
%

\subsection{Cross-ratios}
In the last section, we will use the following lemma proven by J-P. Otal \cite{cinca1992geometrie} for Hadamard spaces. Recall that the cross-ratio of four boundary points is defined by 
\begin{equation}\label{eq - cross ratio}
[a,b,c,d]_{\h^{p,q}}:=\frac{d_x(a,c)d_x(b,d)}{d_x(a,d) d_x(b,c)}.
\end{equation}
It is independent of $x$. Indeed, computing  $\left(\frac{d_x(a,c)d_x(b,d)}{d_x(a,d) d_x(b,c)}\right)^2$ we find
\begin{align*}
&\frac{\pscal{\tilde a}{\tilde c}_{p,q+1} }{\pscal{\tilde a}{\tx}_{p,q+1} \pscal{\tilde c}{\tx}_{p,q+1} } \frac{\pscal{\tilde b}{\tilde d}_{p,q+1} }{\pscal{\tilde b}{\tx}_{p,q+1} \pscal{\tilde d}{\tx}_{p,q+1} } \frac{\pscal{\tilde a}{\tx}_{p,q+1} \pscal{\tilde d}{\tx}_{p,q+1} }{\pscal{\tilde a}{\tilde d}_{p,q+1} } \frac{\pscal{\tilde b}{\tx}_{p,q+1} \pscal{\tilde c}{\tx}_{p,q+1} }{\pscal{\tilde b}{\tilde c}_{p,q+1} }\\
&=  \frac{\pscal{\tilde a}{\tilde c}_{p,q+1}\pscal{\tilde b}{\tilde d}_{p,q+1}  }{\pscal{\tilde b}{\tilde c}_{p,q+1}\pscal{\tilde a}{\tilde d}_{p,q+1}}.
\end{align*}

Recall that every element $\g$ in a $\h^{p,q}$-convex cocompact group has  north-south dynamics, Proposition \ref{prop -north south dynamic}. The fixed points are denoted by $\g^\pm$ and the axis by $(\g^- \g^+)$ which is  a spacelike geodesic of $\h^{p,q}$, invariant by $\g$. The latter acts by translation on this geodesic, and we call $\ell_{\h^{p,q}}(\g)$ its translation length: $\ell_{\h^{p,q}}(\g) := d_{\h^{p,q}}(\g x, x) $ for any $x\in (\g^-\g^+)$.

\begin{lemme}
Let $\g\in \G$. If $\g^-,\g^+\in\Lambda$ are its repelling and attracting fixed points, then for any $\xi \in \Lambda\setminus\{g^{\pm}\}$:
$$[\g^-, \g^+,\g\xi,\xi]_{\h^{p,q}} = e^{\ell_{\h^{p,q}}(\g)}.$$
\end{lemme}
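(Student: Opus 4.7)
The plan is to exploit the explicit formula for the cross--ratio in terms of the ambient bilinear form:
\[
[\gamma^-,\gamma^+,\gamma\xi,\xi]^2 \;=\; \frac{\langle \gamma^-|\gamma\xi\rangle\,\langle \gamma^+|\xi\rangle}{\langle \gamma^+|\gamma\xi\rangle\,\langle \gamma^-|\xi\rangle},
\]
which is the identity already derived right before the statement. Once each of the four brackets is rewritten using the $\SO(2,n)$-invariance of $\pscal{\cdot}{\cdot}$ together with the fact that $\gamma^\pm$ are eigendirections of $\gamma$, the formula will collapse to a power of the eigenvalue at $\gamma^+$, which I must then identify with $e^{\ell_{\AdS}(\gamma)}$.

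First I would fix isotropic vectors $v^\pm \in \R^{n+2}$ representing the half--lines $\gamma^\pm \in \partial\AdS^{n+1}$. Since these half--lines are preserved by $\gamma$, there exist positive reals $\lambda^\pm$ with $\gamma v^\pm = \lambda^\pm v^\pm$. Because $\langle\gamma^-|\gamma^+\rangle<0$ (the two points are joined by a spacelike geodesic, namely the axis), the scalar $\pscal{v^-}{v^+}$ is nonzero, and its preservation by $\gamma$ forces $\lambda^+\lambda^-=1$; up to the labelling convention that $\gamma^+$ is attractive, $\lambda^+>1$. Using invariance of the bilinear form, for any $\xi\in\Lambda\setminus\{\gamma^\pm\}$:
\[
\pscal{v^-}{\gamma\xi} \;=\; \pscal{\gamma^{-1}v^-}{\xi}\;=\;\frac{1}{\lambda^-}\pscal{v^-}{\xi}\;=\;\lambda^+\pscal{v^-}{\xi},
\]
and symmetrically $\pscal{v^+}{\gamma\xi} = (\lambda^+)^{-1}\pscal{v^+}{\xi}$. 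Substituting into the squared cross--ratio formula, the $\pscal{v^\pm}{\xi}$ factors cancel and leave
\[
[\gamma^-,\gamma^+,\gamma\xi,\xi]^2 \;=\;(\lambda^+)^2,
\]
so the cross--ratio equals $\lambda^+$ (the positive square root being the correct one since the cross--ratio is defined from positive quantities $d_x$).

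It remains to identify $\lambda^+$ with $e^{\ell_{\AdS}(\gamma)}$. The axis $\Axis(\gamma)=(\gamma^-\gamma^+)$ is the intersection of $\AdS^{n+1}$ with the plane $\Vect(v^+,v^-)$, on which the bilinear form has signature $(1,1)$. A point on the axis is of the form $x=av^++bv^-$ with $2ab\pscal{v^+}{v^-}=-1$, hence $\gamma x = \lambda^+ a v^+ + \lambda^- b v^-$ and
\[
\pscal{x}{\gamma x}\;=\;ab(\lambda^++\lambda^-)\pscal{v^+}{v^-}\;=\;-\tfrac{1}{2}(\lambda^++(\lambda^+)^{-1})\;=\;-\cosh(\ln\lambda^+).
\]
By Proposition \ref{prop - formula for the distance}, $d_{\AdS}(x,\gamma x)=\ln\lambda^+$, and since this is the translation length of $\gamma$ on its axis, $\ell_{\AdS}(\gamma)=\ln\lambda^+$. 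Combining with the previous step yields $[\gamma^-,\gamma^+,\gamma\xi,\xi]_{\AdS} = e^{\ell_{\AdS}(\gamma)}$.

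The only delicate point is the sign/branch issue when taking the square root: one must verify that the four bracket factors are all negative (so their ratio is positive) and that $\lambda^+ \geq 1$ under the convention $\gamma^+ = $ attractive fixed point. Both follow from the acausality of $\Lambda$ (so $\pscal{\gamma^\pm}{\xi}<0$, and $\pscal{\gamma^\pm}{\gamma\xi} = \gamma^\mp$--proportional to $\pscal{\gamma^\pm}{\xi}$) and from the north--south dynamics of $\gamma$ on $\Lambda$ (which forces $\lambda^+>1>\lambda^-$). No further geometric input beyond the eigenvalue structure of a hyperbolic element of $\SO(2,n)$ is needed.
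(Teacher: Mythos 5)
Your proof is correct. The underlying mechanism is the same as the paper's --- both exploit that $\gamma^\pm$ are eigendirections of $\gamma$ acting on $\R^{2,n}$ --- but the routes differ in a way worth noting. The paper works through the Busemann function: it first establishes $\beta_{\gamma^\pm}(\gamma x, x) = \mp\ell_{\AdS}(\gamma)$ for $x$ on the axis, then extends this identity from the axis plane $P$ to all of $\R^{2,n}$ via the orthogonal splitting $P\oplus P^\perp$ (using that $\pscal{\gamma^\pm}{h}=0$ for $h\in P^\perp$ and that $\gamma$ preserves $P^\perp$), and finally projectivizes to reach $\xi\in\Lambda$. You instead read off the eigenvalue relation $\gamma v^\pm=\lambda^\pm v^\pm$ directly, deduce $\lambda^+\lambda^-=1$ from invariance of $\pscal{v^+}{v^-}$, make the cross-ratio collapse via the adjoint identity $\pscal{v^-}{\gamma\xi}=\pscal{\gamma^{-1}v^-}{\xi}=\lambda^+\pscal{v^-}{\xi}$, and then identify $\lambda^+=e^{\ell_{\AdS}(\gamma)}$ by an explicit axis parametrization and Proposition~\ref{prop - formula for the distance}. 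Your route is slightly more elementary and avoids the $P\oplus P^\perp$ bookkeeping, at the cost of a separate translation-length computation that the paper absorbs into the Busemann identity; the sign/branch discussion at the end is handled adequately. Both proofs are correct and of comparable length.
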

\begin{proof}
The computation above gives 
\begin{eqnarray*}
[\g^-, \g^+,\g(\xi),\xi]^2_{\h^{p,q}}&=&\frac{\pscal{\tilde \g^-}{\g\tilde \xi}_{p,q+1}\pscal{\tilde \g^+}{\txi}_{p,q+1}  }{\pscal{\tilde \g^+}{\g\txi}_{p,q+1}\pscal{\tilde \g^-}{\txi}_{p,q+1}}.\\									
\end{eqnarray*}
Let $P$ be the plane in $\R^{p+q+1}$ such that $\proj(P)\cap \h^{p,q} = (\g^-\g^+).$
It is of signature $(+,-)$, so its orthogonal $P^\perp $ (for $\pscalb{\cdot}{\cdot}$) satisfies 
$P\oplus P^\perp = \R^{p+q+1}$. It is clear from the definition of the Buseman functions $\beta_{\g^\pm}$ that for $x\in  (\g^-\g^+),$ we have 
$$\left| \frac{\pscal{\tilde \g^-}{\g\tilde x}_{p,q+1}}{\pscal{\tilde \g^-}{\tilde x}_{p,q+1}  }\right|=e^{\beta_{\g^-} (\g x ,x)}= e^{\ell_{\h^{p,q}}(\g)}.$$
$$\left|\frac{\pscal{\tilde \g^+}{\tilde x}_{p,q+1}}{\pscal{\tilde \g^+}{\g\tilde x}_{p,q+1}  }\right|=e^{\beta_{\g^+} (x ,\g x) }= e^{\ell_{\h^{p,q}}(\g)}.$$
Let $h\in P^\perp$ then 
$$\left|\frac{\pscal{\tilde \g^-}{\g(\tilde x+\tilde h)}_{p,q+1}}{\pscal{\tilde \g^-}{\tilde x+\tilde h}_{p,q+1}  }\right|= \left|\frac{\pscal{\tilde \g^-}{\g\tilde x}_{p,q+1}}{\pscal{\tilde \g^-}{\tilde x}_{p,q+1}  }\right|,$$
since $\g$ preserve $P^\perp$. Hence for all $x\in \R^{p+q+1}$ we have 
$$\beta_{\g^-} (\g x ,x)= \ell_{\h^{p,q}}(\g).$$
And by the same argument 
$$\beta_{\g^+} (x ,\g x) = \ell_{\h^{p,q}}(\g).$$
It follows by projectivizing that 
$$\left|\frac{\pscal{\tilde \g^-}{\g\tilde x}_{p,q+1}}{\pscal{\tilde \g^-}{\tilde x}_{p,q+1}  } \right|= \left|\frac{\pscal{\tilde \g^-}{\g\tilde \xi}_{p,q+1}}{\pscal{\tilde \g^-}{\tilde \xi}_{p,q+1}  }\right|=e^{\ell_{\h^{p,q}}(\g)} .$$
Therefore
\begin{eqnarray*}
[\g^-, \g^+,\g\xi,\xi]^2_{\h^{p,q}}&=&e^	{2\ell_{\h^{p,q}}(\g)}.				
\end{eqnarray*}

\end{proof}

\section{Conformal densities} \label{sec - conformal density}
This section is devoted to the definition and properties of conformal densities in the pseudo-Riemannian setting. Two important proofs,  the existence of conformal densities and their ergodicity, are postponed to the appendix as the proofs of the corresponding results in metric geometry can be carried out mutatis mutandis (the only difference is the presence of the additive constant $k_\G$ in the triangle inequality for $d_{\h^{p,q}}$). This should result in a clearer outline of the theory. However technical difficulties sometimes appear due to the pseudo-Riemannian context, this is notably the case for the Shadow Lemma (Theorem \ref{shadow_lemma}). 

Once again, a $\h^{p,q}$-convex cocompact group $\G\subset\PO(p,q+1)$ is fixed, and $\Lambda\subset \partial \h^{p,q}$ is its limit set. We will mainly follow the notes of J.-F. Quint \cite{quint2006overview}. Another  reference for this notion in hyperbolic geometry is the book of P. Nicholls \cite{nicholls1989ergodic}. The strategy is the following: 
\begin{itemize}
\item Show the existence of a conformal density of dimension $\delta_{\h^{p,q}}(\G)$ by the Patterson-Sullivan method.
\item Show that conformal densities have no atoms and prove the Shadow Lemma.
\item Show the ergodicity of conformal densities and conclude on the uniqueness of the density. 
\end{itemize}

\begin{defi}
A conformal density of dimension $s$ is a family of measures $(\nu_x)_{x\in C(\Lambda)}$ {\color{black} on $\Lambda$} satisfying the following conditions:
\begin{enumerate}
\item $\forall \g\in \G$, $\g^*\nu_x = \nu_{\g x}$ (where $  \g^*\nu (E) = \nu(\g^{-1} E))$
\item $\forall x,y\in C(\Lambda)$, $\frac{d \nu_x}{d\nu_{y}} (\xi) = e^{-s\beta_\xi (x,y)} $
\item $\supp(\nu_x)=\Lambda$\label{eq -support}
\end{enumerate}
\end{defi}

As we said, by the classical Patterson-Sullivan construction we can build conformal density, therefore we postpone the proof of the following theorem to the appendix: 
\begin{theorem}
There exists a conformal density of dimension $\delta_{\h^{p,q}}(\G)$.
\end{theorem}
We will denote by $(\mu_x)_{x\in C(\Lambda)}$ this conformal density, called the Patterson-Sullivan density.

\subsection{Properties of conformal densities}
\subsubsection{Atomic part}

As a consequence of radial convergence, conformal densities have no atoms. The proof of the following proposition  is very similar to the Riemannian case. 

\begin{prop}
Let $(\nu_x)_{x\in C(\Lambda)}$ be a conformal density of dimension $s\in\R $. For all $x\in C(\Lambda)$, $\nu_x$ has no atom. 
\end{prop}
\begin{proof}
Let us fix $x\in C(\Lambda)$. Assume by contradiction that there exists $\xi\in \Lambda$ with $\nu_x(\xi)>0$. \\
Note that $s=0$ is impossible, because  $\nu_x$ would be an invariant under $\G$ on $\Lambda$, and the action of $\G$ on $\Lambda$ is topologically conjugate to the actions on its Gromov boundary.\\
First, let us assume that there is $\g\in \G$ such that $\g \xi =\xi$. 
We have 
\begin{eqnarray*}
\nu_x(\xi) &= &\nu_x(\g^i \xi) \\
					&=&  \nu_{\g^{-i} x} (\xi)\\
					&=& e^{-s \beta_\xi (\g^{-i} x, x)} \nu_x(\xi)
\end{eqnarray*}
As $\xi$ is assumed to be fixed by $\g$ we have $\beta_\xi (\g^{-i} x, x)= \beta_\xi (x,\g^i x)$, and then $ \lim_{i\tv +\infty} e^{-s \beta_\xi (\g^{-i} x, x)} =+\infty$ if $s> 0$, and  $ \lim_{i\tv +\infty} e^{-s \beta_\xi (\g^{-i} x, x)} =0$ if $s< 0$. Both cases lead to a contradiction.\\
Therefore  $Stab_{\G}(\xi) = \Id$
\begin{eqnarray*}
\nu_x(\Lambda) &\geq &  \sum_{\g\in \G} \nu_x(\g^{-1} \xi)\\
							&\geq & \sum_{\g\in \G} e^{-s \beta_\xi(\g x,x)}\nu_x(\xi).
\end{eqnarray*}
Every limit point is radial, hence we can find a sequence $\g_i \in \G$ such that $\beta_\xi(x,\g_i x) \tv +\infty $, hence $\beta_\xi(\g_i x,x) \tv -\infty $, and therefore $\nu_x(\Lambda)=+\infty$ if $s>0$. This is a contradiction with the fact that $\nu_x$ is a finite measure. \\
 If $s<0$, consider any sequence $(\g_i)$ such that $\g_ix$ converges in $\overline{\h^{p,q}}$ to some $\eta\neq \xi$. We then have $\beta_\xi(\gamma_ix,x)\to+\infty$, which also gives a contradiction with the fact that $\nu_x$ is a finite measure. 
\end{proof}

{A consequence of the non existence of atoms is the fact that small pseudo-Riemannian balls on the boundary have small mass.

\color{red}\begin{coro} \label{coro - petite boule petite masse} Let $(\nu_x)_{x\in C(\Lambda)}$ be a conformal density. For all $x\in C(\Lambda)$ and $\e>0$, there is $r>0$ such that $\nu_x(B_x(\xi,r))<\e$ for all $\xi\in\Lambda$.
\end{coro}
\begin{proof} Let $x\in C(\Lambda)$ and $\e>0$. Since $\nu_x$ has no atoms, for all $\xi\in\Lambda$ there is $r_\xi>0$ such that $\nu_x(B_x(\xi,r_\xi))<\e$. Consider $\xi_1,\dots,\xi_k$ such that $\Lambda=\bigcup_{i=1}^kB_x(\xi_i,\frac{r_{\xi_i}}{2\lambda_\G})$, and set $r=\min \frac{r_{\xi_i}}{2\lambda_\G}$. If  $\xi\in B_x(\xi_i,\frac{r_{\xi_i}}{2\lambda_\G})$, then $B_x(\xi,r)\subset B_x(\xi_i,r_{\xi_i})$, hence $\nu_x(B_x(\xi,r))<\e$.
\end{proof}

As a consequence, we find that large shadows have a large mass.
\begin{coro} \label{coro - grande ombre grande masse} Let $(\nu_x)_{x\in C(\Lambda)}$ be a conformal density. For all $x\in C(\Lambda)$ and $\e>0$, there is $R>0$ such that 
$\nu_x(\mathcal S_R(y,x) ) \geq \nu_x(\Lambda) -\epsilon$ for all $y\in C(\Lambda)$.
\end{coro}
\begin{proof} Because of Corollary \ref{coro - petite boule petite masse}, it is enough to show that for all $x\in C(\Lambda)$ and $\e>0$, there is $R>0$ such that all $\xi,\eta\in\Lambda\setminus \mathcal S_R(y,x)$ satisfy $d_x(\xi,\eta)<\e$.\\
Assume that it is not the case. Then is $\e>0$ such that, for all $R>0$, we can find $y_R\in C(\Lambda)$ and $\xi_R,\eta_R\notin \mathcal S_R(y_R,x)$ with $d_x(\xi_R,\eta_R)\geq\e$.\\
Choose a sequence $R_i\to+\infty$ such that $y_{R_i}\to y\in \overline{C(\Lambda)}$, $\xi_{R_i}\to\xi\in\Lambda$ and $\eta_{R_i}\to\eta\in\Lambda$. Note that since $\mathcal S_R(y_R,x)\neq \Lambda$ for all $R>0$, we have that $d_{\h^{p,q}}(x,y_R)\geq R$, so $y\in \Lambda$.\\
Assume that $y\neq \xi$, and let $z\in (y\xi)$. We can find a sequence $z_i\to z$ such that $z_i\in [y_{R_i}\xi_{R_i})$ for all $i\in \N$. Since $\xi_{R_i}\notin\mathcal S_{R_i}(y_{R_i},x)$, we have $d_{\h^{p,q}}(x,z_i)\geq R_i\to +\infty$, so $z\in \Lambda$, which is absurd.\\
We now have that $y=\xi$, and similarly $y=\eta$, hence $d_x(\xi,\eta)=0$ which is a contradiction with $d_x(\xi_R,\eta_R)\geq\e$ for all $R$.
\end{proof}
}

\subsubsection{Shadow Lemma}\label{sec - shadow lemma}

The Shadow Lemma is one of the most important results in Patterson-Sullivan theory (i.e. the study of conformal densities). It  gives an estimate of the measures of shadows, which we will later translate into an estimate of the measures of balls on the boundary (Theorem \ref{measure_balls}). 

\begin{theorem}[Shadow lemma]\label{shadow_lemma}
Let $(\nu_x)_{x\in C(\Lambda)}$ be a conformal density of dimension $s$, and $x\in C(\Lambda)$. There is $r_0>0$ such that, for all $r>r_0$, there is $C(r)>0$ satisfying:
\[ \frac{1}{C(r)}e^{-sd(x,\gamma.x)} \le \nu_x(\mathcal S_r(x,\gamma.x)) \le C(r) e^{-sd(x,\gamma.x)}.\]
\end{theorem}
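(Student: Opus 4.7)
The plan is to derive both bounds from the conformality rule $d\nu_x/d\nu_{\gamma x}(\xi) = e^{-s\beta_\xi(x, \gamma x)}$ combined with the Busemann-shadow estimate of Lemma \ref{shadow_1}: for $\xi \in \mathcal S_r(x,\gamma x)$,
$$d(x,\gamma x) - 2r - 2k \leq \beta_\xi(x,\gamma x) \leq d(x,\gamma x) + k.$$

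For the upper bound, I would write $\nu_x(\mathcal S_r(x,\gamma x)) = \int_{\mathcal S_r} e^{-s\beta_\xi(x,\gamma x)}\, d\nu_{\gamma x}(\xi)$, apply the lower inequality on $\beta_\xi$ to the integrand, and use $\Gamma$-invariance to rewrite $\nu_{\gamma x}(\Lambda) = \nu_x(\Lambda) < \infty$, producing the constant $C(r) = e^{s(2r+2k)}\nu_x(\Lambda)$. For the lower bound, the same identity with the upper estimate on $\beta_\xi$ yields
$$\nu_x(\mathcal S_r(x,\gamma x)) \geq e^{-sk}\, e^{-sd(x,\gamma x)} \nu_{\gamma x}(\mathcal S_r(x,\gamma x)),$$
and since $\gamma$ is an isometry one has $\gamma^{-1}\mathcal S_r(x,\gamma x) = \mathcal S_r(\gamma^{-1}x, x)$, so $\Gamma$-invariance of $\nu$ reduces the problem to proving the uniform estimate $\inf_{g\in\Gamma} \nu_x(\mathcal S_r(gx, x)) \geq c(r) > 0$ for all $r$ large enough.

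This uniform bound is the main step. For $\eta\in\Lambda$, introduce $U_r(\eta) = \{\xi \in \Lambda : d(x, (\eta\xi)) < r\}$, which is well-defined because acausality of $\Lambda$ makes $(\eta\xi)$ a spacelike geodesic for $\xi\neq\eta$, and set $F(\eta, r) = \nu_x(U_r(\eta))$. The function $F(\cdot, r)$ is lower semicontinuous on $\Lambda$: continuous dependence of the geodesic $(\eta\xi)$ on $\eta$ (for $\xi\neq\eta$) together with the non-atomicity of $\nu_x$ (Proposition \ref{prop - pas d'atomes}) gives, via Fatou, $F(\eta,r)\leq\liminf F(\eta_n,r)$ whenever $\eta_n\to\eta$. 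Monotone convergence and non-atomicity also provide $F(\eta, r) \to \nu_x(\Lambda) > 0$ as $r \to \infty$ for every $\eta$. A diagonal compactness argument on $\Lambda$ then yields $r_0, c_0 > 0$ with $F(\eta, r_0) \geq c_0$ for all $\eta\in\Lambda$. To conclude, if $g_n \in\Gamma$ made the shadow infimum vanish, proper discontinuity would force $g_n x$ to leave every compact set, so after extraction $g_n x \to \eta \in \Lambda$ (the only possible boundary accumulation for an orbit in $C(\Lambda)$); for $\xi\in U_{r_0}(\eta)\setminus\{\eta\}$ the half-geodesic $[g_n x, \xi)$ converges to $(\eta\xi)$, which meets the open ball $B(x,r_0)$, so $\xi \in \mathcal S_{r_0}(g_n x, x)$ for large $n$; Fatou would then give $\liminf \nu_x(\mathcal S_{r_0}(g_n x, x)) \geq c_0$, a contradiction.

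The main obstacle is precisely this uniform bound. In the hyperbolic case, elementary spherical convexity makes shadows from distant points cover all of $\partial\mathbb{H}^n$ except a small cap; here that intuition fails, and one must replace it by exploiting the acausality of $\Lambda$ (so that $(\eta\xi)$ is spacelike and accessible from $\AdS^{n+1}$) together with non-atomicity of $\nu$ in order to recover the analogous "almost-covering" property uniformly in $\eta$.
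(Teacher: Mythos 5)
Your reduction to a uniform lower bound on $\nu_x(\mathcal S_r(\gamma^{-1}x,x))$ (via conformality, Lemma \ref{shadow_1}, and $\Gamma$-invariance) is exactly the paper's, so the interest lies in how you establish that bound. The paper's route is more explicit and leans on the causal geometry of $\partial\AdS^{n+1}$: Lemma \ref{compactness_argument} gives $\e>0$ with $\nu_x(\Lambda\setminus B_x(\eta,\e))\ge\e$ uniformly in $\eta$; Lemma \ref{shadow_2} shows that the $d_x$-diameter of $\Lambda\setminus \mathcal S_r(y,x)$ goes to $0$ uniformly over $y$ causally close to a ray from $x$; and for each $\gamma$ one picks $\eta\in\Lambda$ causally related to the direction $\tau$ of $[x\,\gamma^{-1}x)$ (using that $\Lambda$ is a Cauchy hypersurface in $\partial U(x)$), then computes $\langle z\vert\gamma^{-1}x\rangle$ to show $\gamma^{-1}x\in J([x\eta))$, so $\mathcal S_r(\gamma^{-1}x,x)\supset\Lambda\setminus B_x(\eta,\e)$. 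You replace all of this with a soft topological argument: the sets $U_r(\eta)$ of ``shadows from infinity'', lower semicontinuity of $\eta\mapsto\nu_x(U_r(\eta))$ plus compactness of $\Lambda$ to extract $r_0,c_0$, and a Fatou contradiction applied to a sequence $g_nx\to\eta$. Both routes use non-atomicity in an essential way; yours avoids the explicit inner-product computation and the Cauchy-hypersurface input but is correspondingly less quantitative.

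A few points deserve attention before this is watertight. First, the passage ``proper discontinuity would force $g_nx$ to leave every compact set'' needs care: you must rule out $\nu_x(\mathcal S_{r_0}(g_0x,x))=0$ for a single $g_0$. Since $g_0x\in[g_0x\,\xi)$ for every $\xi$, one has $\mathcal S_{r_0}(g_0x,x)=\Lambda$ whenever $r_0>d(x,g_0x)$, and there are only finitely many $g_0$ with $d(x,g_0x)$ below any given threshold; so the fix is simply to enlarge $r_0$, but this should be said. Second, your derivation of the upper bound from $\beta_\xi\ge d(x,\gamma x)-2r-2k$ only works when $s\ge 0$; the paper states the Shadow Lemma for arbitrary $s$ and then deduces $s\ge 0$ a posteriori in Proposition \ref{s_bigger_than_delta}, so the $s<0$ case should either be treated (with inequalities swapped) or explicitly dismissed. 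Third, the convergence $[g_nx\,\xi)\to(\eta\xi)$ must be checked to hit $B(x,r_0)$ on the ray from $g_nx$ and not merely on the full geodesic; this follows because the intersection point is interior while $g_nx\to\eta$, but merits a sentence.
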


\begin{proof}
Let $r>0$ and $\gamma\in\Gamma$.

\begin{align*} 
\nu_x(\mathcal S_r(x,\gamma.x)) &= \nu_x(\gamma\mathcal S_r(\gamma^{-1}.x,x)) \\
~~&= \nu_{\gamma^{-1}.x}(\mathcal S_r(\gamma^{-1}.x,x))\\
~~&= \int_{\mathcal S_r(\gamma^{-1}.x,x)} e^{-s \beta_\xi(\gamma^{-1}.x,x)} d\nu_x(\xi).
\end{align*}
If $s\ge 0$, Lemma \ref{shadow_1} now implies that:
\[ \nu_x(\mathcal S_r(\gamma^{-1}.x,x)) e^{-sk_\G} e^{-sd_{\h^{p,q}}(x,\gamma.x)} \le \nu_x(\mathcal S_r(x,\gamma.x)) \le \nu_x(\Lambda) e^{2s(r+k_\G)} e^{-sd_{\h^{p,q}}(x,\gamma.x)} . \]
If $s<0$, we get:
\[ \nu_x(\mathcal S_r(\gamma^{-1}.x,x)) e^{-2s(r+k_\G)} e^{-sd_{\h^{p,q}}(x,\gamma.x)} \le \nu_x(\mathcal S_r(x,\gamma.x)) \le \nu_x(\Lambda) e^{sk_\G} e^{-sd_{\h^{p,q}}(x,\gamma.x)} . \]

This gives us the right hand side inequality. In order to prove the other inequality,  we now have  to show is that there are $\e>0$ and $r_0>0$ such that $\nu_x(\mathcal S_r(\gamma^{-1}.x,x)) \ge \e$ for all $r>r_0$ and $\gamma\in \Gamma$. {\color{red} This is a consequence of Corollary \ref{coro - grande ombre grande masse}.}

\end{proof}

Using the convex cocompactness, we can generalize Theorem \ref{shadow_lemma} to find an estimate of the measure of all shadows.

\begin{theorem} \label{shadow_generalised}
Let $\nu$ be a conformal density of dimension $s$, and let $x\in C(\Lambda)$. There is $r'_0>0$ such that for all $r\geq r'_0$, there is a constant $C'(r)>0$ satisfying:
\[ \frac{1}{C'(r)} e^{-sd_{\h^{p,q}}(x,y)}\leq \nu_x(\mathcal S_r(x,y))\leq C'(r) e^{-sd_{\h^{p,q}}(x,y)}\]
for all $y\in C(\Lambda)$.
\end{theorem}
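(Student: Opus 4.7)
The plan is to reduce to Theorem \ref{shadow_lemma} (the shadow lemma for orbit points) by using the cocompactness of the $\Gamma$-action on $C(\Lambda)$. First I would fix a compact fundamental domain $K\subset C(\Lambda)$ and set $D=\sup_{z\in K}d(x,z)$, which is finite because $z\mapsto|\langle x\vert z\rangle|$ is continuous and hence bounded on the compact set $K$. Given $y\in C(\Lambda)$, pick $\gamma\in \Gamma$ with $\gamma^{-1}.y\in K$; since $d$ is $\Gamma$-invariant (it is defined from $\langle\cdot\vert\cdot\rangle$), this gives $d(y,\gamma.x)=d(\gamma^{-1}.y,x)\leq D$. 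The weak triangle inequality from Theorem \ref{th - triangle inequality} then yields $|d(x,y)-d(x,\gamma.x)|\leq D+k$.

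The key step is to sandwich the shadow $\mathcal S_r(x,y)$ between shadows based at $\gamma.x$. If $\xi\in \mathcal S_r(x,y)$, there is $p\in[x\xi)$ with $d(p,y)\leq r$, and by the triangle inequality $d(p,\gamma.x)\leq d(p,y)+d(y,\gamma.x)+k\leq r+D+k$, so $\xi\in \mathcal S_{r+D+k}(x,\gamma.x)$. The symmetric argument gives $\mathcal S_{r-D-k}(x,\gamma.x)\subset \mathcal S_r(x,y)$ whenever $r-D-k>0$. Hence
\[
\mathcal S_{r-D-k}(x,\gamma.x)\;\subset\;\mathcal S_r(x,y)\;\subset\;\mathcal S_{r+D+k}(x,\gamma.x).
\]

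Now set $r'_0=r_0+D+k$, where $r_0$ is the constant from Theorem \ref{shadow_lemma}. For $r\geq r'_0$ both radii $r\pm(D+k)$ exceed $r_0$, so applying Theorem \ref{shadow_lemma} to both sides and combining with the estimate $|d(x,y)-d(x,\gamma.x)|\leq D+k$ (which costs a multiplicative factor $e^{|s|(D+k)}$) yields
\[
\frac{1}{C'(r)}e^{-sd(x,y)}\;\leq\;\nu_x(\mathcal S_r(x,y))\;\leq\; C'(r)\,e^{-sd(x,y)},
\]
with $C'(r)=e^{|s|(D+k)}\max\bigl(C(r-D-k),\,C(r+D+k)\bigr)$, independent of $y$.

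I do not expect any serious obstacle: this is a standard cocompactness argument, and the only mild care required is to absorb the additive constant $k$ from the weak triangle inequality together with the diameter $D$ of the fundamental domain. The same reduction scheme will be reusable elsewhere whenever a statement about orbit points needs to be extended to all points of $C(\Lambda)$.
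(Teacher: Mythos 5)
Your proposal is correct and follows essentially the same path as the paper's own proof: fix a compact fundamental domain (or a ball of radius $R$ containing one), translate $y$ by some $\gamma\in\Gamma$ to within bounded distance of $x$, sandwich $\mathcal S_r(x,y)$ between $\mathcal S_{r\pm(D+k)}(x,\gamma.x)$ using the weak triangle inequality, and then invoke Theorem \ref{shadow_lemma} on both sides while absorbing the bounded error $|d(x,y)-d(x,\gamma.x)|\leq D+k$ into the constant. The only cosmetic difference is that you use $e^{|s|(D+k)}$ to cover both signs of $s$, whereas the paper writes $e^{s(R+k)}$ (relying implicitly on $s\geq 0$); your version is slightly more careful but the argument is the same.
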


\begin{proof}
Let $r_0>0$ be given by Theorem \ref{shadow_lemma}. The action of $\Gamma$ on $C(\Lambda)$ being cocompact, we can choose $R>0$ such that the ball $B_{C(\Lambda)}(x,R)$ contains a fundamental domain for the action on $C(\Lambda)$. We will show that $r'_0=r_0+R+k_\G$ and $C'(r)=\max(C(r+R+k_\G),C(r-R-k_\G))e^{s(R+k_\G)})$ fulfill the requirements.\\
Let $y\in C(\Lambda)$ and $r>r'_0$. There is $\gamma\in \Gamma$ such that $\gamma^{-1}.y\in B_{C(\Lambda)}(x,R)$, hence $d_{\h^{p,q}}(y,\gamma.x)<R$. Let $\xi\in \mathcal S_r(x,y)$, and $z\in [x\xi)$ such that $d_{\h^{p,q}}(y,z)<r$. We find $d_{\h^{p,q}}(\gamma.x,z)\leq d_{\h^{p,q}}(\gamma.x,y)+d_{\h^{p,q}}(y,z)+k_\G<R+r+k_\G$. It follows that $\mathcal S_r(x,y)\subset \mathcal S_{r+R+k_\G}(x,\gamma.x)$.\\
A similar computation shows that $\mathcal S_r(x,y)\supset \mathcal S_{r-R-k_\G}(x,\gamma.x)$.\\
By Theorem \ref{shadow_lemma}, we get:
\begin{align*}
\nu_x(\mathcal S_{r-R-k_\G}(x,\gamma.x)) &\leq & \nu_x(\mathcal S_r(x,y)) &\leq  \nu_x(\mathcal S_{r+R+k_\G}(x,\gamma.x))\\
\frac{1}{C(r-R-k_\G)}e^{-s d_{\h^{p,q}}(x,\gamma.x)} &\leq & \nu_x(\mathcal S_r(x,y)) &\leq  C(r+R+k_\G)e^{-sd_{\h^{p,q}}(x,\gamma.x).}
\end{align*}
We also have $d_{\h^{p,q}}(x,\gamma.x)\leq d_{\h^{p,q}}(x,y)+d_{\h^{p,q}}(y,\gamma.x)+k_\G\leq d_{\h^{p,q}}(x,y)+R+k_\G$, as well as $d_{\h^{p,q}}(x,\gamma.x)\geq d_{\h^{p,q}}(x,y)-R-k_\G$. It follows that:
\[ \frac{e^{-s (R+k_\G)}}{C(r-R-k_\G)}e^{-s d_{\h^{p,q}}(x,y)} \leq  \nu_x(\mathcal S_r(x,y)) \leq  e^{s (R+k_\G)}C(r+R+k_\G)e^{-s d_{\h^{p,q}}(x,y)}. \]
\end{proof}

We follow Sullivan's work  \cite{sullivan1979density} and show that the Shadow Lemma implies the following:

\begin{theorem} \label{measure_balls}
Let $\nu$ be a conformal density of dimension $s$, and let $x\in C(\Lambda)$. There is $c>0$ such that for all $\xi\in \Lambda$, $r\in (0,1)$, we have: \[\frac{\nu_x(B_x(\xi,r))}{r^s} \in \left[\frac{1}{c},c\right]\]

\end{theorem}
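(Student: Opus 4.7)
The plan is to sandwich the Lorentzian ball $B_x(\xi,r)$ between two shadows based at well-chosen points on the ray $[x\xi)$, and then apply the generalised Shadow Lemma (Theorem \ref{shadow_generalised}) to both shadows. Corollaries \ref{ball_subset_shadow} and \ref{shadow_subset_ball} were set up precisely for this purpose: they convert a radius $r$ on the boundary into a distance $\simeq -\ln r$ in $C(\Lambda)$.

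For the upper bound, fix $\xi \in \Lambda$ and $r \in (0,1)$, and choose $y \in [x\xi) \subset C(\Lambda)$ with $d(x,y)=-\ln r$ (which is positive since $r<1$). By Corollary \ref{ball_subset_shadow} one has
\[ B_x(\xi,r) \subset \mathcal{S}_{\ln 6}(x,y) \subset \mathcal{S}_{R_0}(x,y), \]
where $R_0 := \max(r'_0,\ln 6)$ and $r'_0$ is the threshold provided by Theorem \ref{shadow_generalised}. Applying that theorem with radius $R_0$ yields
\[ \nu_x(B_x(\xi,r)) \le C'(R_0)\, e^{-s\,d(x,y)} = C'(R_0)\, r^{s}. \]

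For the lower bound, again fix $\xi \in \Lambda$ and $r \in (0,1)$. Set $t := r'_0$ and pick $y \in [x\xi)$ with $d(x,y) = t + k + \tfrac{\ln 2}{2} - \ln(r/4)$, which is positive since $r<1<4$. Corollary \ref{shadow_subset_ball} then gives
\[ \mathcal{S}_{r'_0}(x,y) \subset B_x(\xi,r), \]
and Theorem \ref{shadow_generalised} applied with radius $r'_0$ yields
\[ \nu_x(B_x(\xi,r)) \ge \frac{1}{C'(r'_0)} e^{-s\,d(x,y)} = \frac{1}{C'(r'_0)} \bigl(\tfrac{1}{4}\bigr)^{s} e^{-s(r'_0 + k + \tfrac{\ln 2}{2})} r^{s}. \]
Taking $c$ to be the maximum of the two multiplicative constants thus produced (and their reciprocals) gives the desired two-sided bound uniformly in $\xi \in \Lambda$ and $r \in (0,1)$.

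There is essentially no obstacle beyond bookkeeping: both corollaries were engineered for exactly this sandwich, and the generalised Shadow Lemma is already free of the restriction that $y$ be in an orbit of $x$. The one small point requiring attention is that the chosen points $y\in[x\xi)$ must lie in $C(\Lambda)$, which holds because $x \in C(\Lambda)$, $\xi \in \Lambda\subset\overline{C(\Lambda)}$, and $\overline{C(\Lambda)}$ is convex, so the entire spacelike geodesic ray $[x\xi)$ is contained in $C(\Lambda)$.
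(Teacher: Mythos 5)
Your proof is correct and follows essentially the same route as the paper: sandwich $B_x(\xi,r)$ between shadows using Corollaries \ref{ball_subset_shadow} and \ref{shadow_subset_ball}, then apply Theorem \ref{shadow_generalised}. The only cosmetic difference is that the paper uses a single shadow radius $t=\max(r'_0,\ln 6)$ for both bounds whereas you use $\max(r'_0,\ln 6)$ for the upper bound and $r'_0$ for the lower bound, which changes nothing.
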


\begin{proof}
Let $r'_0>0$ be given by Theorem \ref{shadow_generalised}, and $t=\max(r'_0,\ln 6)$.\\
Let $\xi\in \Lambda$ and $r\in (0,1)$. According to Corollary \ref{ball_subset_shadow}, by letting $y_1\in [x\xi)$ be the point such that $d_{\h^{p,q}}(x,y_1)=-\ln r$, we find $B_x(\xi,r)\subset \mathcal S_{\ln 6}(x,y_1)\subset \mathcal S_t(x,y_1)$. By Theorem \ref{shadow_generalised}, we get $\nu_x(B_x(\xi,r))\leq C'(t) e^{-sd_{\h^{p,q}}(x,y_1)} =C'(t) r^{s}$.\\
Now let $y_2\in [x\xi)$ be such that $d_{\h^{p,q}}(x,y_2)=t+k_\G+\frac{\ln 2}{2} -\ln\frac{r}{4}$. According to Corollary \ref{shadow_subset_ball}, we have the inclusion $\mathcal S_t(x,y_2)\subset B_x(\xi,r)$. Theorem \ref{shadow_generalised} now gives us $\nu_x(B_x(\xi,r))\geq \frac{1}{C'(t)} e^{-sd_{\h^{p,q}}(x,y_2)}\geq \frac{1}{c} r^{s}$ where $c=C'(t) e^{s(t+k_\G+\frac{5\ln 2}{2})}$.
\end{proof}

\subsection{Uniqueness of conformal densities}

The proof of the uniqueness of the conformal density breaks into two very distinct parts: first we show that any conformal density has dimension $\delta_{\h^{p,q}}(\G)$, then we show that conformal densities  are ergodic. We keep the proof of the first step in this section, since it uses the tools of Lorentzian geometry elaborated in Section \ref{sec -geometric toolbox}. The second step, ergodicity, is  a straightforward generalization of the Riemannian case  (and quite technical). Its proof is postponed  to the appendix.

We follow the notes of Quint \cite{quint2006overview} and adapt it to our setting. 
\begin{prop} \label{s_bigger_than_delta}
If there is a non trivial conformal density of dimension $s$, then $s\geq \delta_{\h^{p,q}}(\Gamma)$.
\end{prop}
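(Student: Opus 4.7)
The plan is to convert the shadow lemma lower bound into an upper bound on the exponential growth of $\G$.

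Fix $r > r_0$ from Theorem~\ref{shadow_lemma}, and group the elements of $\G$ by distance: $\G_n = \{\g \in \G : n \leq d(x, \g x) < n+1\}$. The shadow lemma gives $\nu_x(\mathcal{S}_r(x, \g x)) \geq C(r)^{-1} e^{-s(n+1)}$ for every $\g \in \G_n$. The heart of the proof is the following uniform multiplicity claim: there exists $N = N(r)$, independent of $n$, such that every $\xi \in \Lambda$ lies in at most $N$ of the shadows $\{\mathcal{S}_r(x, \g x) : \g \in \G_n\}$. Granting this, integration against $\nu_x$ yields
$$\#\G_n \cdot C(r)^{-1} e^{-s(n+1)} \leq \sum_{\g \in \G_n} \nu_x(\mathcal{S}_r(x,\g x)) \leq N \nu_x(\Lambda),$$
so $\#\G_n \leq C' e^{sn}$ and $\#\{\g : d(x,\g x) \leq R\} \leq C'' e^{sR}$; taking $\limsup \frac{1}{R}\log$ gives $\delta(\G) \leq s$.

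To prove the multiplicity claim, suppose $\xi \in \mathcal{S}_r(x, \g x)$ with $\g \in \G_n$, and let $q^\xi_n$ be the point on $[x\xi)$ at arclength $n$ from $x$. Combining $d(\g x, [x\xi)) < r$ with the triangle inequality (Theorem~\ref{th - triangle inequality}), and using the fact that arclength along the spacelike geodesic $[x\xi)$ agrees with $d$, one finds $d(\g x, q^\xi_n) \leq R_0$ for a constant $R_0 = R_0(r,k)$. Hence every such $\g x$ lies in $B(q^\xi_n, R_0) \cap C(\Lambda)$, and it suffices to bound the number of orbit points in this set uniformly in $q^\xi_n$.

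The key Lorentzian observation is that $B(q, R_0) \cap C(\Lambda)$ is relatively compact in $C(\Lambda)$, uniformly as $q$ varies over a compact fundamental domain of the $\G$-action. Indeed, if $y_i \in B(q, R_0) \cap C(\Lambda)$ accumulated on some $\eta \in \partial \AdS^{n+1}$, then necessarily $\eta \in \Lambda$, and Definition~\ref{def - definition de la distance dasn C(Lambda)} gives $|\langle q, y_i\rangle| \leq \cosh R_0$; writing $\lambda_i y_i \to \eta_0$ with $\lambda_i \to 0^+$ forces $|\langle q, y_i\rangle| \to \infty$ unless $\langle q, \eta\rangle = 0$, contradicting $\langle q, \eta\rangle < 0$ from $q \in C(\Lambda) \subset E(\Lambda)$ (Lemma~\ref{lemma_disjoint}). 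By properness and cocompactness of the $\G$-action on $C(\Lambda)$, only finitely many orbit points of $x$ can meet such a set, and by $\G$-equivariance the bound is uniform in $q$ after translating into the fundamental domain; this yields $N(r)$.

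The main obstacle is precisely this multiplicity step. In the Riemannian hyperbolic setting it is a trivial volume count inside a compact ball; here Lorentzian balls are a priori non-compact because they contain the entire causal cone of their centre, so one genuinely needs the black-domain condition $C(\Lambda) \subset E(\Lambda)$ to recover relative compactness. Everything else is a direct adaptation of the classical Patterson–Sullivan argument as in Quint~\cite{quint2006overview}.
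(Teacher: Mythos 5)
Your proof is correct and follows the same overall strategy as the paper: bound the multiplicity of overlaps among shadows at a fixed scale, then combine with the Shadow Lemma lower bound and the finiteness of $\nu_x(\Lambda)$. The multiplicity step is handled somewhat differently. The paper compares two elements $\gamma,\gamma'\in\Gamma_i$ whose shadows share a point $\xi$: picking nearby points $y,z\in[x\xi)$, it shows $d(\gamma x,\gamma' x)\leq 1+4(r+k)$, hence $\gamma^{-1}\gamma'$ lies in a fixed finite set $Z\subset\Gamma$. You instead anchor the estimate at a reference point $q_n^\xi\in[x\xi)$, show all relevant orbit points lie in $B(q_n^\xi,R_0)\cap C(\Lambda)$, and then argue relative compactness of such sets together with properness of the $\Gamma$-action. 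The two routes are equivalent in spirit, but yours makes explicit a point the paper takes for granted, namely \emph{why} $Z$ (or more generally a Lorentzian ball meeting $C(\Lambda)$) contains only finitely many orbit points: a priori the Lorentzian ball contains the entire causal cone of its centre, and it is precisely the condition $q\in C(\Lambda)\subset E(\Lambda)$, via Lemma \ref{lemma_disjoint}, that excludes accumulation at $\Lambda$ and yields relative compactness. The paper buys brevity; your version buys a cleaner justification of finiteness.

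One small gap: your inequality $\nu_x(\mathcal S_r(x,\gamma x))\geq C(r)^{-1}e^{-s(n+1)}$ for $\gamma\in\Gamma_n$ uses $d(x,\gamma x)<n+1$ and therefore implicitly assumes $s\geq 0$ (if $s<0$ the function $e^{-sd}$ is increasing in $d$, and the correct bound would be $e^{-sn}$). The paper records first, as a direct consequence of the Shadow Lemma lower bound, that any non trivial conformal density must have $s\geq 0$, since otherwise the masses $\nu_x(\mathcal S_r(x,\gamma x))$ would be bounded below over an infinite group, contradicting $\nu_x(\Lambda)<\infty$. You should either insert that observation, or note that with the corrected lower bound your own estimate gives $\#\Gamma_n\lesssim e^{sn}\to 0$ when $s<0$, which is already absurd for an infinite group.
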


\begin{proof}
Let $\nu$ be a non trivial conformal density of dimension $s$. First, note that the left hand side of the inequality in the Shadow Lemma implies that $s\ge 0$: if it were not the case, the measure $\nu_x$ would be infinite.\\
Let $C$ and $r_0$ be given by Theorem \ref{shadow_lemma}, and let $r\geq r_0$. For $i\in \N$, we set $\Gamma_i=\{\gamma\in\Gamma \vert i\le d_{\h^{p,q}}(x,\gamma.x)<i+1\}$.\\
Let $p$ be the cardinal of $Z=\{\gamma\in \Gamma \vert d_{\h^{p,q}}(x,\gamma.x)\leq 1+4(r+k_\G)\}$. We will show that given $\xi\in\Lambda$ and $i\in\N$, there are at most $p$ elements $\gamma\in\Gamma_i$ such that $\xi\in\mathcal S_r(x,\gamma.x)$.\\
Let $\gamma,\gamma'\in\Gamma_i$ be such that $\xi \in \mathcal S_r(x,\gamma.x)\cap \mathcal S_r(x,\gamma'.x)$. There is $y\in [x\xi)$ such that $d_{\h^{p,q}}(\gamma.x,y)\le r$. Let us find an estimation for $d_{\h^{p,q}}(x,y)$:
\begin{align*}
d_{\h^{p,q}}(x,y) &\leq d_{\h^{p,q}}(x,\gamma.x)+d_{\h^{p,q}}(\gamma.x,y)+k_\G \leq i+1+r+k_\G\\
d_{\h^{p,q}}(x,y)&\geq d_{\h^{p,q}}(x,\gamma.x)-d_{\h^{p,q}}(y,\gamma.x)-k_\G \geq i-r-k_\G.
\end{align*}
\[i-r-k_\G\le d_{\h^{p,q}}(x,y)\le i+1+r+k_\G\]
 Similarly there is $z\in [x\xi)$ such that $d(\gamma'.x,z)\le r$ and: \[i-r-k_\G\le d_{\h^{p,q}}(x,z)\le i+1+r+k_\G.\]
  Since $y$ and $z$ lie in the same half geodesic $[x\xi)$, we see that  \[d_{\h^{p,q}}(y,z)=\pm (d_{\h^{p,q}}(x,y)-d_{\h^{p,q}}(x,z)).\] This shows that $d_{\h^{p,q}}(y,z)\le 1+2(r+k_\G)$. Finally, we find:
\[d_{\h^{p,q}}(\gamma.x,\gamma'.x)\le d_{\h^{p,q}}(\gamma.x,y)+d_{\h^{p,q}}(y,z)+d_{\h^{p,q}}(z,\gamma'.x)+2k_\G\le 1+4(r+k_\G).\] 
This means that $\gamma^{-1}\gamma'\in Z$, which shows the desired bound on the number of such elements of $\Gamma_i$.\\
{\color{black} If $a_i$ is the number of elements of $\Gamma_i$, we find:}

\begin{align*}
\nu_x(\Lambda) &\ge \nu_x\left(\bigcup_{\gamma\in\Gamma_i}\mathcal S_r(x,\gamma.x)\right) \\
&\ge \frac{1}{p}\sum_{\gamma\in\Gamma_i} \nu_x\left(S_r(x,\gamma.x)\right)\\
&\ge \frac{1}{pC}\sum_{\gamma\in\Gamma_i} e^{-sd_{\h^{p,q}}(x,\gamma.x)}\\
&\ge \frac{1}{pC} e^{-s(i+1)}a_i.
\end{align*}

Let $D=pC\nu_x(\Lambda)$, so that we find $a_i\le D e^{s(i+1)}$ for all $i$, and :
\[ \frac{1}{i}\ln(a_0+\cdots+a_i) \le \frac{1}{i}\ln(D(i+1)e^{s(i+1)}) \xrightarrow[i\to +\infty]{} s. \]
 Since $\delta_{\h^{p,q}}(\G)=\limsup \frac{1}{i}\ln(a_0+\cdots+a_i)$, we find that $s\ge \delta_{\h^{p,q}}(\G)$.

\end{proof}

Knowing the fact that every point of $\Lambda$ is radial, we can turn the inequality of Corollary \ref{s_bigger_than_delta} into an equality.

\begin{prop} \label{divergence_at_s} If there is a conformal density of dimension $s$, then the Poincaré series diverges at $s$.
\end{prop}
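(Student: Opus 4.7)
The plan is to run the classical Borel--Cantelli argument, replacing the usual hyperbolic geometry by the Shadow Lemma (Theorem \ref{shadow_lemma}) and the fact (already established) that every limit point is radial.

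Assume for contradiction that the Poincaré series converges at $s$, i.e.\ $\sum_{\gamma\in\Gamma} e^{-sd(x,\gamma.x)}<\infty$ for some (hence any) base point $x\in C(\Lambda)$. Let $r_0>0$ be given by the Shadow Lemma, and fix any $r\geq r_0$. By the Shadow Lemma there is a constant $C(r)$ such that
\[\nu_x(\mathcal S_r(x,\gamma.x)) \leq C(r)\, e^{-sd(x,\gamma.x)}\]
for every $\gamma\in\Gamma$. Summing over $\Gamma$ gives $\sum_{\gamma}\nu_x(\mathcal S_r(x,\gamma.x))<\infty$, so the Borel--Cantelli lemma applied to the enumeration $\Gamma=\{\gamma_p\}_{p\in\N}$ yields
\[\nu_x\!\left(\bigcap_{N\in\N}\bigcup_{p\geq N}\mathcal S_r(x,\gamma_p.x)\right)=0.\]

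Now I take a countable union over, say, $r\in\N$ with $r\geq r_0$. Combining with Lemma \ref{lemma_description_limit_set}, which identifies the radial limit set with $\Lambda$ itself (every limit point being radial), I obtain
\[\nu_x(\Lambda)=\nu_x\!\left(\bigcup_{r\in\N,\, r\geq r_0}\bigcap_{N\in\N}\bigcup_{p\geq N}\mathcal S_r(x,\gamma_p.x)\right)=0.\]
But $\mathrm{supp}(\nu_x)=\Lambda$, so the non-trivial conformal density satisfies $\nu_x(\Lambda)>0$, a contradiction. Hence the Poincaré series must diverge at $s$.

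There is no serious obstacle: the only delicate point is to make sure one enumerates $\Gamma$ and chooses countably many radii $r$ so that the description of $\Lambda$ from Lemma \ref{lemma_description_limit_set} applies verbatim. Everything else is a direct combination of the Shadow Lemma with the radiality of all limit points, which is precisely what the earlier sections were set up to give.
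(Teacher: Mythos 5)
Your proof is correct and follows essentially the same route as the paper: assume convergence, bound $\nu_x(\mathcal S_r(x,\gamma.x))$ from above via the Shadow Lemma, apply Borel--Cantelli to kill the radial limit set for each $r$, and then use the increasing nature of the union over $r$ together with Lemma \ref{lemma_description_limit_set} to reduce to a countable union and contradict $\supp(\nu_x)=\Lambda$. The paper only differs in wording (it spells out the Borel--Cantelli estimate and the $r\leq r_0$ case by hand); the mathematical content is identical.
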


\begin{proof}
We will use an enumeration $\Gamma=\{\gamma_p\vert p\in\N\}$. By Lemma \ref{lemma_description_limit_set}, we have that: 
\begin{equation} \label{description_limit_set}\Lambda= \bigcup_{r>0}\bigcap_{N\in \N}\bigcup_{p\geq N} \mathcal S_r(x,\gamma_p.x)
\end{equation}
Assume that $\sum_{p=0}^{+\infty} e^{-sd_{\h^{p,q}}(x,\gamma_p.x)}<+\infty$, and let $\nu$ be a conformal density of dimension $s$.\\
Let $r_0$ be given by Theorem \ref{shadow_lemma}, and let $r\geq r_0$ and $C$ the associated constant from the same theorem. Given $\e>0$, we can find $N\in\N$ such that $\sum_{p=N}^{+\infty} e^{-sd_{\h^{p,q}}(x,\gamma_p.x)} \leq \e$. By Theorem \ref{shadow_lemma}, we have that $\nu_x(\mathcal S_r(x,\gamma_p.x))\leq Ce^{-sd_{\h^{p,q}}(x,\gamma_p.x)}$ for all $p\in\N$, hence $\nu_x(\bigcup_{p\geq N} \mathcal S_r(x,\gamma_p.x))\leq C\e$. This implies that $\nu_x(\bigcap_{N\in \N}\bigcup_{p\geq N} \mathcal S_r(x,\gamma_p.x))=0$.\\
If $r\leq r_0$, then $\mathcal S_r(x,\gamma_p.x)\subset \mathcal S_{r_0}(x,\gamma_p.x)$ for all $p\in \N$, so we also find $\nu_x(\bigcap_{N\in \N}\bigcup_{p\geq N} \mathcal S_r(x,\gamma_p.x))=0$.\\
Since the union over all $r>0$ in (\ref{description_limit_set}) is increasing, it can be written as a countable union, and we find that $\nu_x(\Lambda)=0$, which is a contradiction. Therefore $\sum_{p=0}^{+\infty} e^{-sd_{\h^{p,q}}(x,\gamma_p.x)}=+\infty$.
\end{proof}

\begin{coro} \label{s_equals_delta}
If there is a conformal density of dimension $s$, then $s=\delta_{\h^{p,q}}(\G)$.
\end{coro}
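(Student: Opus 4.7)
The plan is to combine the two preceding propositions directly. Proposition \ref{s_bigger_than_delta} already yields the inequality $s \geq \delta_\Gamma$ whenever a non-trivial conformal density of dimension $s$ exists, so the only remaining task is to establish the reverse inequality $s \leq \delta_\Gamma$.

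For this reverse inequality, I would invoke Proposition \ref{divergence_at_s}, which asserts that the Poincaré series $P(s) = \sum_{\gamma \in \Gamma} e^{-s d(\gamma o, o)}$ diverges at $s$. The critical exponent $\delta_\Gamma$ was defined in the introduction as the exponential growth rate of orbit counts, and by a standard dyadic decomposition argument (comparing $\sum_\gamma e^{-s d(\gamma o, o)}$ with $\sum_i a_i e^{-si}$, where $a_i = \#\{\gamma : i \leq d(\gamma o, o) < i+1\}$), $\delta_\Gamma$ coincides with the abscissa of convergence of the Poincaré series. In particular, for any $s' > \delta_\Gamma$ the series converges. Hence divergence at $s$ forces $s \leq \delta_\Gamma$.

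Combining $s \geq \delta_\Gamma$ from Proposition \ref{s_bigger_than_delta} with $s \leq \delta_\Gamma$ just derived, we conclude $s = \delta_\Gamma$, which is the desired statement. No step here is really an obstacle: the corollary is a formal consequence of the two previous results, with the only thing to verify being the elementary fact that $\delta_\Gamma$ controls the abscissa of convergence of the Poincaré series, which follows immediately from the definition of the critical exponent as a $\limsup$ of a counting function.
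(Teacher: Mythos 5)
Your proposal is correct and matches the paper's proof: both combine Proposition \ref{s_bigger_than_delta} for $s\geq\delta_\Gamma$ with Proposition \ref{divergence_at_s} (divergence of the Poincar\'e series at $s$) to get $s\leq\delta_\Gamma$. The only difference is that you spell out the standard observation that $\delta_\Gamma$ is the abscissa of convergence of the Poincar\'e series, which the paper leaves implicit.
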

\begin{proof}
Corollary \ref{s_bigger_than_delta} gives us $s\geq \delta_{\h^{p,q}}(\G)$, and Proposition \ref{divergence_at_s} implies that $s\leq \delta_{\h^{p,q}}(\G)$.
\end{proof}

\begin{coro}
\[\sum_{\gamma\in\Gamma} e^{-\delta_{\h^{p,q}}(\G) d_{\h^{p,q}}(x,\gamma.x)}=+\infty\]
\end{coro}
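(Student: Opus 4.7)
The plan is essentially to invoke two earlier results in sequence; there is no new content to develop. First I would note that Proposition \ref{pr - patterson est une conformal density} already guarantees the existence of a conformal density of dimension exactly $\delta_\Gamma$, namely the Patterson--Sullivan density $\mu_x$ built from the weighted series $Q(s)$. Thus the hypothesis of Proposition \ref{divergence_at_s} is fulfilled with $s=\delta_\Gamma$.

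Then I would apply Proposition \ref{divergence_at_s} directly: the existence of a conformal density of dimension $s$ forces the Poincaré series $\sum_{\gamma\in\Gamma} e^{-s d(x,\gamma.x)}$ to diverge at $s$. Specializing to $s=\delta_\Gamma$ gives exactly the desired conclusion
\[
\sum_{\gamma\in\Gamma} e^{-\delta_\Gamma d(x,\gamma.x)}=+\infty.
\]

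There is no genuine obstacle here — the corollary is a two-line consequence of the previous proposition together with the construction of the Patterson--Sullivan measure. The only thing worth remarking is that a priori the modified Poincaré series $Q(s)$ was introduced precisely because one did not know that $P(s)=\sum_\gamma e^{-s d(x,\gamma.x)}$ diverges at $\delta_\Gamma$; the present corollary is the a posteriori verification that the divergence does in fact hold, so the modification by Patterson's function $h$ was a technical device rather than a necessity imposed by the geometry. In particular, $\Gamma$ is of divergence type.
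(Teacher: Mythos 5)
Your proof is correct and is exactly the paper's argument: apply Proposition~\ref{divergence_at_s} to the Patterson--Sullivan density built in Proposition~\ref{pr - patterson est une conformal density}, which has dimension $\delta_\Gamma$. The extra remark that this establishes a posteriori that $\Gamma$ is of divergence type (so Patterson's auxiliary function $h$ was only a provisional device) is accurate and matches the spirit of the construction.
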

\begin{proof}
This is a straightforward consequence of Proposition \ref{divergence_at_s} and the existence of the Patterson-Sullivan density which is of dimension $\delta_{\h^{p,q}}(\G)$.
\end{proof}

We conclude this section by stating the uniqueness theorem of conformal density. 

A conformal density $(\nu_x)_{x\in C(\Lambda)}$ is said to be ergodic if {\color{black} any $\G$-invariant subset $A\subset \Lambda$ satisfies $\nu_x(A)=0$ or $\nu_x(\Lambda\setminus A)=0$ for any $x\in C(\Lambda)$.} We have: 
\begin{theorem}
Any  anti-de Sitter conformal density $(\nu_x)_{x\in C(\Lambda)}$ is ergodic. Therefore, the Patterson-Sullivan density is the only conformal density up to a multiplicative constant.
\end{theorem}
Compare with \cite[Corollary 5.2.4]{nicholls1989ergodic}.

\section{Pseudo-Riemannian Hausdorff dimension and measure}\label{sec - Lorentzian hausdorff dim}
We still assume that a $\h^{p,q}$-convex cocompact group $\G\subset \PO(p,q+1)$ is fixed and denote by $\Lambda$ its limit set. We introduce the concept of pseudo-Riemannian Hausdorff dimension, {\color{black}generalizing} the usual definition in a metric space to our case. This gives an invariant that we show to be equal to the critical exponent $\delta_{\h^{p,q}}(\G)$, Theorem \ref{th - l'exposant critique = dim de hausdorff}. Moreover using a comparison with a Riemannian metric we show an inequality in every dimension : $\delta_{\h^{p,q}}(\G)\leq \Hdim(\Lambda)\leq p-1$.

\subsection{Definitions}
The Hausdorff dimension is usually defined for a  metric space. Here, we will use the Gromov distance $d_x$ instead of a metric.\\
\indent Although we could define a notion of pseudo-Riemannian Hausdorff dimension and measures in any pseudo-Riemannian manifold, dealing with this general setting would be the source of many technical difficulties. 

%

\indent If $E\subset \Lambda$, for all $s>0$ and $\e>0$ we set: \[\mathrm H_{d_x}^{s,\e}(E) = \inf \left\{ \sum r_i^s \vert E\subset \bigcup B_x(\xi_i,r_i), \xi_i\in E, r_i\le \e\right\} .\]
 Since $\mathrm H_{d_x}^{s,\e}(E)$ increases as $\e$ decreases, we can consider: \[\mathrm H_{d_x}^s(E)=\lim_{\e\to 0} \mathrm H_{d_x}^{s,\e}(E) \in \intff{0}{+\infty}.\] Finally, the pseudo-Riemannian Hausdorff dimension of $E$ is: \[\Hdim_{d_x}(E)=\inf\{ s>0 \vert \mathrm H_{d_x}^s(E)=0\}.\]

 {\color{black}\emph{Remark:} Although the definition of $\mathrm H_{d_x}^s$ makes sense for any subset of  $\partial U(x)$, it does not define an outer measure on $\partial U(x)$. However, it is one when restricted to the limit set $\Lambda$ of a $\h^{p,q}$-convex cocompact group $\Gamma$ of $\PO(p,q+1)$. This is true because $\Lambda$ is a \emph{quasi-metric space}, Lemma \ref{triangle_boundary}. For Hausdorff measures and dimension of quasi-metric spaces, see the second chapter of \cite{AlvaradoMitreaBook}.\\
 We also want to emphasize the fact that the centers $\xi_i$ of the balls must be taken in $E$. This is not so important for metric spaces, but crucial in our context (one can easily check that if centers outside of $\Lambda$ were allowed, the dimension would be $0$ for any  $E\subset \Lambda$). \\}
 {\color{black}
\indent \emph{Geometric meaning of $\Hdim_{d_x}$:} The Hausdorff dimension of a metric space reflects the number of balls of a certain radius that are necessary to cover the set. The pseudo-Riemannian Hausdorff dimension $\Hdim_{d_x}$ reflects the number of "balls" for $d_x$ that are required to cover a subset of $\Lambda$. However, "balls" for $d_x$ are actually the interiors of quadrics  (intersected with $\Lambda$), which are the pseudo-Riemannian analogue of balls.
}

\begin{prop} \label{prop - comparaison dimensions} If $A\subset \Lambda$, then $\Hdim_{d_x}(E)\leq \Hdim_h (E)$, where $\Hdim_h(E)$ is the Hausdorff dimension with respect to any Riemannian metric $h$ on $\partial U(x)$.  
\end{prop}

\begin{proof} First note that Riemannian metrics on a manifold always give the same Hausdorff dimension to compact subsets because they are locally bi-Lipschitz with respect to each other.\\
It will be sufficient for our purpose to find a Riemannian metric $h$ on $\partial \ADS^{n+1}$ such that $d_x(\xi,\eta)\leq d_h(\xi,\eta)$ for all $\xi,\eta\in \partial U(x)$, where $d_h$ is the distance associated to $h$.\\
Let us fix a lift $\tx\in\R^{n+2}$ of $x$ satisfying $\pscalb{\tx}{\tx}=-1$. There is a natural identification between $\partial U(x)$ and the following submanifold of $\R^{n+2}$:
\[ V_{\tx}=\{u\in \R^{n+2} \vert \pscalb{u}{u}=0=1- \pscalb{\tx}{u} \}.\] Any  $\xi\in \partial U(x)$ has a unique lift in $V_{\tx}$, which we will denote $\p(\xi)$.
\begin{align*}
d_x(\xi,\eta) &= \sqrt{\frac{-\pscalb{\p(\xi)}{\p(\eta)}}{2\pscalb{\p(\xi)}{\tx}\pscalb{\tx}{\p(\eta)}}} \\
&= \sqrt{\frac{-\pscalb{\p(\xi)}{\p(\eta)}}{2}} \\
&= \frac{1}{2}\sqrt{\pscalb{\p(\xi)-\p(\eta)}{\p(\xi)-\p(\eta)}}.
\end{align*}

If we denote by $\langle\cdot\vert\cdot\rangle_{e}$ the Euclidean inner product on $\R^{n+2}$, we find that $\sqrt{\pscalb{\p(\xi)-\p(\eta)}{\p(\xi)-\p(\eta)}}\leq \sqrt{\pscal{\p(\xi)-\p(\eta)}{\p(\xi)-\p(\eta)}_{e}}$ for all $\xi,\eta\in \Lambda$ (because $\pscalb{\p(\xi)-\p(\eta)}{\p(\xi)-\p(\eta)}\geq 0$), hence \[d_x(\xi,\eta)\leq\frac{1}{2} \sqrt{\pscal{\p(\xi)-\p(\eta)}{\p(\xi)-\p(\eta)}_{e}}.\]

 If $h$ is the pull-back by $\p$ of the Riemannian metric on $V_{\tx}$ induced by the Euclidean metric of $\R^{n+2}$, we have \[\sqrt{\pscal{\p(\xi)-\p(\eta)}{\p(\xi)-\p(\eta)}_{e}}\leq d_h(\xi,\eta).\]
 
 Finally, we get $d_x(\xi,\eta)\leq \frac{1}{2}d_h(\xi,\eta)\leq d_h(\xi,\eta)$.
From this, we deduce that $B_h(\xi,r)\subset B_x(\xi,r)$ for all $\xi\in \Lambda,r\ge 0$, where $B_h$ is the ball for the Riemannian metric $h$. This implies that $\Hdim_{d_x}(E)$ is smaller than the Hausdorff dimension for $h$.
\end{proof}

We also find a universal upper bound on the (Riemannian) Hausdorff dimension of $\Lambda$, which by Proposition \ref{prop - comparaison dimensions} also provides an upper bound for the pseudo-Riemannian Hausdorff dimension.
\begin{prop} \label{prop - dim riemannienne bornee} $\Hdim(\Lambda)\leq p-1$
\end{prop}
\begin{proof} Consider the linear model $\mathcal H^{p,q}$. Its boundary  $\partial\mathcal H^{p,q}$ is the quotient of the isotropic cone of $\pscalb{\cdot}{\cdot}$ by positive homotheties. The pre-image of $\Lambda$ in $\partial\mathcal H^{p,q}$ is the disjoint union of two sets $\Lambda_\pm$.\\
The map $\p:\Ss^{p-1}\times \Ss^q \to \partial \mathcal H^{p,q}$ defined by $\p(x,y)=(x,y)$ is diffeomorphism.  The fact that $\Lambda$ is negative implies that $\p^{-1}(\Lambda_+)$ is the graph of a $1$-Lipschitz function $f:\mathcal L\to \Ss^q$ for some closed subset $\mathcal L\subset \Ss^{p-1}$. It follows that $\Hdim(\Lambda_+)=\Hdim(\mathcal L)\leq p-1$. The result follows from the fact that $\Hdim(\Lambda_+)=\Hdim(\Lambda)$.
\end{proof}

\subsection{Pseudo-Riemannian Hausdorff measure and the Patterson-Sullivan density} 

The  Vitali covering lemma is a very useful tool for computing Hausdorff dimensions. Since $d_x$ is not a distance, we will need an appropriate version of this classic result.\\
Recall that by Lemma \ref{triangle_boundary}, there is $\lambda_\G\geq 1$ such that $d_x(\xi,\eta)\le \lambda_\G (d_x(\xi,\tau) + d_x(\tau, \eta))$ for all $x\in C(\Lambda), \xi,\eta,\tau\in\Lambda$.\\
{\color{black}Note that the following lemma is also a consequence of Lemma 2.7 in \cite{AlvaradoMitreaBook}, however we include a proof in order to stay as self-contained as possible.}

\begin{lemma}[Vitali for $d_x$] \label{Vitali_distance}
Given a subset $J\subset \Lambda$ and a bounded function $r:J\to (0,+\infty)$, there is a subset $I\subset J$ such that:
\begin{itemize} \item The balls $B_x(\xi,r(\xi))$ are disjoint for distinct points $\xi\in I$.
\item $\bigcup_{\xi\in J} B_x(\xi,r(\xi)) \subset \bigcup_{\eta\in I}B_x(\eta,5\lambda_\G^2r(\eta))$.
\end{itemize}
\end{lemma}

\begin{proof} 
Let $R=\sup_J r$, and consider $J_n=\{\xi\in J : 2^{-n-1}R<r(\xi)\leq 2^{-n}R\}$ for any $n\geq 0$, so that $J$ is the disjoint unions of these subsets. Define inductively subsets $I_n,H_n$ of $J_n$ by letting $H_0=J_0$, and $I_0\subset H_0$ be maximal amongst the subsets $A\subset H_0$ such that the balls $B_x(\xi,r(\xi))$ are disjoint for distinct $\xi\in A$ (such a subset exists by Zorn's Lemma). Given $I_0,\dots, I_n$, we let $H_{n+1}=\{\xi\in J_n : \forall \eta\in I_0\cup\cdots\cup I_n~ B_x(\xi,r(\xi))\cap B_x(\eta,r(\eta))=\emptyset\}$, and choose $I_{n+1}$  maximal amongst the subsets  $A\subset H_{n+1}$ such that the balls $B_x(\xi,r(\xi))$ are disjoint for distinct $\xi\in A$ . Finally, let $I=\bigcup_{n\in\N}I_n$.\\
It follows from the construction of $I$ that the considered balls are disjoint. For the second point, let $\xi\in J$, and consider $n\in \N$ such that $\xi\in J_n$. There are two cases: either $\xi\notin H_n$, in which case there is $\eta\in I_0\cup\cdots\cup I_n\subset I$ satisfying $B_x(\xi,r(\xi))\cap B_x(\eta,r(\eta))\neq \emptyset$, or $\xi\in H_n$, in which case there is $\eta\in I_n\subset I$ satisfying $B_x(\xi,r(\xi))\cap B_x(\eta,r(\eta))\neq \emptyset$ (because of the maximality of $I_n$).\\
In both cases, we find $\eta\in I_0\cup\cdots\cup I_n$ such that $B_x(\xi,r(\xi))\cap B_x(\eta,r(\eta))\neq \emptyset$. Since $r(\eta)>2^{-n-1}R$ and $r(\xi)\leq 2^{-n}R$, we have $r(\xi)\leq 2r(\eta)$, which implies $B_x(\xi,r(\xi))\subset B_x(\eta,\lambda_\G(2+3\lambda_\G)r(\eta))\subset B_x(\eta,5\lambda_\G^2r(\eta))$.\\
\end{proof}

We can now compare the pseudo-Riemannian Hausdorff measures of dimension $\delta_{\h^{p,q}}(\G)$ and the Patterson-Sullivan measures.

\begin{theorem} \label{th - comparaison Hausdorff PS}
Let $(\mu_x)_{x\in C(\Lambda)}$ denote the Patterson-Sullivan density. For all $x\in C(\Lambda)$, there is $\alpha>0$ such that $\frac{1}{\alpha}\mathrm H_{d_x}^{\delta_{\h^{p,q}}(\G)}(E)\leq \mu_x(E)\leq \alpha \mathrm H_{d_x}^{\delta_{\h^{p,q}}(\G)}(E)$ for all measurable subset $E\subset \Lambda$.
\end{theorem}

\begin{proof} By Theorem \ref{measure_balls}, we can fix a constant $c>0$ such that \[\frac{1}{c}   r^{\delta_{\h^{p,q}}(\G)} \leq \mu_x(B_x(\xi,r)) \leq  c r^{\delta_{\h^{p,q}}(\G)}\] for all $\xi\in \Lambda$, $r\in (0,1)$.\\
Let $E\subset \Lambda$ be a measurable set.We start with the left hand side inequality. Let $\e>0$. Consider the open cover $E\subset \bigcup_{\xi\in E} B_x(\xi,\frac{\e}{5\lambda_\G^2})$. By Lemma \ref{Vitali_distance}, we can find a (necessarily countable) subset $J\subset E$ such that $E\subset \bigcup_{\xi\in J} B_x(\xi,\e)$ and the balls $B_x(\xi,\frac{\e}{5\lambda_\G^2})$ for $\xi\in J$ are pairwise disjoint.\\
 Since  $\mathrm H^{\delta_{\h^{p,q}}(\G),\e}_{d_x}(E)\leq \sum_{\xi\in J} \e^{\delta_{\h^{p,q}}(\G)}$, we find:
\begin{align*}\mathrm H^{\delta_{\h^{p,q}}(\G),\e}_{d_x}(E)&\leq (5\lambda_\G^2)^{\delta_{\h^{p,q}}(\G)} c \sum_{\xi\in J}\mu_x(B_x(\xi,\frac{\e}{5\lambda_\G^2}))\\
 &\leq (5\lambda_\G^2)^{\delta_{\h^{p,q}}(\G)} c \mu_x(E).
\end{align*} 

Let us now deal with the right hand side inequality. Let $(\xi_i,r_i)$ be a countable family of points of $E$ and radii such that $E\subset \bigcup B_x(\xi_i,r_i)$. 
We have $\nu_x(E)\leq \sum \nu_x(B_x(\xi_i,r_i))\leq c \sum r_i^{\delta_{\h^{p,q}}(\G)}$, hence $\nu_x(E) \leq c \mathrm H^{\delta_{\h^{p,q}}(\G),\e}_{d_x}(E)$ for all $\e>0$.\\
Combining these two inequalities and letting $\e\to0$, we get:
\[ \frac{1}{c(5\lambda_\G^2)^{\delta_{\h^{p,q}}(\G)}} \mathrm H^{\delta_{\h^{p,q}}(\G)}_{d_x}(E)\leq \mu_x(E) \leq c \mathrm H^{\delta_{\h^{p,q}}(\G)}_{d_x}(E).\]

\end{proof}

\subsection{The pseudo-Riemannian Hausdorff dimension of the limit set}

{\color{black}We can now establish the equality between the critical exponent and the pseudo-Riemannian Hausdorff dimension of the limit set.}

\begin{theorem}\label{th - l'exposant critique = dim de hausdorff}
 For any $x\in C(\Lambda)$, we have $\delta_{\h^{p,q}}(\G) =\Hdim_{d_x}(\Lambda).$
\end{theorem}

{\color{black}\begin{proof}
By Theorem \ref{th - comparaison Hausdorff PS} applied to $E=\Lambda$, we find that $0<\mathrm H^{\delta_{\h^{p,q}}(\G)}_{d_x}(\Lambda)<+\infty$.\\
The positivity $0<\mathrm H^{\delta_{\h^{p,q}}(\G)}_{d_x}(\Lambda)$ implies $\delta_{\h^{p,q}}(\G)\leq \Hdim_{d_x}(\Lambda)$.\\
The finiteness $\mathrm H^{\delta_{\h^{p,q}}(\G)}_{d_x}(\Lambda)<+\infty$ implies $\delta_{\h^{p,q}}(\G)\geq \Hdim_{d_x}(\Lambda)$.
\end{proof}
}

{\color{black}The fact that $\Hdim_{d_x}(\Lambda)$ does not depend on the choice of $x\in C(\Lambda)$ is straightforward: if $x,y\in C(\Lambda)$, then the quasi metrics $d_x,d_y$ are bi-Lipschitz on $\Lambda$. However, this equality is also a consequence of our proof, as we work with a fixed $x\in C(\Lambda)$ and the critical exponent $\delta_{\h^{p,q}}(\G)$ does not depend on this point.\\}

Combining Theorem \ref{th - comparaison Hausdorff PS}, Proposition \ref{prop - comparaison dimensions} and Proposition \ref{prop - dim riemannienne bornee}, we  obtain the announced inequality.
\begin{coro} $\delta_{\h^{p,q}}(\G) \leq \Hdim(\Lambda)\leq p-1$.
\end{coro}

\section{Rigidity theorem in dimension 3}\label{sec - rigidity}

This section is devoted to Theorem \ref{th - Intro rigidity }, which is a rigidity theorem for the critical exponent in dimension $3$. Most of the arguments in this section are valid in Lorentzian signature in any dimension, i.e. for $\ADS^{n+1}=\h^{n,1}$, as long as we only look at $\ADS^{n+1}$-quasi-Fuchsian groups. In fact dimension $3$  is  only used for two reasons, which only appear at the very end of the proof. It is first used in the existence of a Cauchy surface of entropy $1$ (namely the boundary of the convex core). The existence in higher dimension of a Cauchy surface of entropy $n-1$ is not known. The second time we use the dimension is when we compute and compare  the length spectra of $C(\Lambda)/\G$ and a hypersurface $\St$.\\
We now consider that a torsion free $\ADS^{n+1}$-quasi-Fuchsian group $\G\subset \PO(n,2)$ is fixed (we will only restrict ourselves to the case $n=2$ when needed).\\
We will use the notations $\delta_\AdS(\G)$ (resp. $d_\AdS$) instead of $\delta_{\h^{n,1}}(\G)$ (resp. $d_{\h^{n,1}}$) . 

The proof  is based on a comparison between $\delta_\AdS(\G)$ and   the critical exponent of the  $\G$-action   on the boundary of the convex core with its induced Riemannian metric. {\color{black}{The proof is based on the work in \cite{glorieux2015entropy}, however since the inequality in the anti-de Sitter case is  opposite as the one for hyperbolic quasi-Fuchsian, we need to change many arguments which makes the proof somehow more involved. }}

As  mentioned in the introduction, one can use the Mess parametrization of $\AdS$ quasi-Fuchsian groups in $\PO(2,2)$ and the right notion of entropy to show that $\delta_\AdS(\G)$ is equal to the critical exponent of an action of $\G$ on $\Hyp^2\times \Hyp^2$, making  Theorem \ref{th - Intro rigidity }  equivalent to a result of C. Bishop and T. Steger \cite{bishop1991three}.\footnote{This is not a straightforward consequence of their work. Indeed, one has to verify that the $\AdS$ critical exponent is equal to the $\AdS$ entropy, ie. the exponential growth rate of the number of closed geodesics.} The proof proposed here is totally independent and has a clear Lorentzian geometric interpretation.

Moreover thanks to examples proposed in \cite{glorieux2015asymptotic}, we know the asymptotic behavior of critical exponent when the two representations in Mess parametrizations range over the product of Teichmüller spaces. \\

 {\color{black}{
This section is organized as follows. After making some comments on geodesic flow, we first introduce the Bowen-Margulis measure and show it is ergodic. The proof works as in the Riemannian context, since it uses Birkhoff's ergodic theorem which is a purely dynamical  result.

 Then we introduce a cocycle that measure the distortion of distances between the convex core and a fixed hypersurface. Using  Kingman's subadditive ergodic theorem, which is a well known result in ergodic theory, we state the main result of this section which is Theorem \ref{th - inequality h<I delta}. This cocycle is defined through a retraction map from the convex core to a Cauchy hypersurface, it uses the $\AdS$ geometry and therefore  is not classical. We study the properties of this map in Subsection \ref{subsection - inequality}.  Proposition \ref{pr - f is 1 lip} is the main ingredient in most of the  $\AdS$-geometric arguments. 
  
In the last subsection, we prove the rigidity theorem. Once again the first part uses a purely dynamical result that appears in \cite{knieper1995volume} and the second part uses $\AdS$ geometry.}}

{\color{black}{
\subsection{Anti-de Sitter geodesic flow}
In this section, we are going to study the dynamic of the geodesic flow associated to $\G$. Since we are working on Lorentzian manifolds, the geodesic flow is  only defined on a subset of the tangent bundle, the set of  spacelike tangent vectors.
The unitary tangent bundle, is the set
$$T^1\ADS^{n+1} := \{(x,v)\in T\ADS^{n+1}\, |\, \pscalb{\tilde v}{\tilde v}=+1\}.$$
Here $\tilde v\in x^\perp$ is given by an identification between $T_x\ADS^{n+1}$ and 
$x^\perp$.  

Every $v\in T^1\ADS^{n+1}$ is spacelike, so the geodesic of $\ADS^{n+1}$ tangent to $v$ is spacelike, and it has two endpoints on the boundary of $\ADS^{n+1}$. We denote them by $v^\pm\in \partial \ADS^{n+1}$. There is no ambiguity on these two points. Indeed, $(x,v)$ lifts to $\{(\tx,v),(-\tx,-v)\}$ and defines two geodesics on the linear model $\AdS^{n+1}$: 
$$\tx(t)= e^{-t} \tx -\frac{\sinh(t)}{\langle \tx,v+\tx\rangle_{p,q+1}}(v+\tx).$$
$$\ty(t)= e^{-t}(- \tx) -\frac{\sinh(t)}{\langle -\tx,-v-\tx\rangle_{p,q+1}}(-v-\tx).$$
And $v^+=\lim_{t\to+\infty}[\tilde x(t)]=\lim_{t\to+\infty}[\tilde y(t)]=[\tx+v]$, and $v^- = [-\tx+v]$. 

Remark that the $\G$-action is properly discontinuous only on the unitary tangent bundle over $E(\Lambda)$. This action of $\G$ is not cocompact on the unitary tangent bundle of $C(\Lambda)$ because the fibers are not compact. However, the subset of recurrent vectors defined by: \[R^1C(\Lambda)=\{v\in T^1C(\Lambda) \, |\, v^+ \in \Lambda\}\] is $\G$-invariant and cocompact. Indeed let $(x_i,v_i)$ be a sequence, of such vectors. Up to the action of $\G$, we can assume that $x_i$ converges to $x_\infty\in K$ a compact fundamental domain for the action of $\G$ on $C(\Lambda)$. Now we can look at recurrent vectors whose base point $x_\infty$. This set is homeomorphic to $\Lambda$ by the map $v\mapsto v^+$. 

Note that $E(\Lambda)/\G$ is not complete and the recurrent vectors is the set for which the geodesic flow $\phi^\AdS_t$ is well defined for all time $t>0$ : for $(x,v)\in R^1C(\Lambda)$, we denote by $\phi_t^\AdS (v) $ the tangent vector on the geodesic defined by $v$ at distance $t$ of $x$. If we take the same notation as before, we have 
$$\phi_t^\AdS (v) = [(\tx(t), \tx'(t))].$$

We will consider a smaller subset, the set of non-wandering vectors, for which the geodesic flow is  well defined for all $t\in \R$. 

}}


\begin{defi}
The non-wandering set, denoted by $N^1C(\Lambda)$, is the subset of $T^1C(\Lambda)$ defined by
$$N^1C(\Lambda):=\{v\in T^1C(\Lambda) \, |\, v^\pm \in \Lambda\}.$$
\end{defi}

The non-wandering set  is homeomorphic to $\Lambda^{(2)} \times \R$ thanks to the so-called Hopf parametrization. Fix a point $o \in C(\Lambda)$, the map 
$$v\in N^1C(\Lambda) \tv (v^-,v^+, \beta_{v^-} (o, \pi(v)),$$ 
is  a homeomorphism, such that the action of the geodesic flow $\phi_t^\AdS$ is given by 
$$\phi_t^{\AdS} (v) = \phi_t(v^-,v^+,s) = (v^-,v^+,t+s).$$

\subsection{Bowen - Margulis measure}
This measure was first introduced by G. Margulis in \cite{margulis1969applications} and R. Bowen in \cite{bowen1972equidistribution}. A good introduction for the hyperbolic case can be found in Chapter 8 of Nicholls' book \cite{nicholls1989ergodic} or in the book of T. Roblin \cite{roblin2003ergodicite}.

The measure $d\mu(\xi,\eta):=\frac{d\nu_x (\xi) d\nu_x(\eta)}{d_x(\xi, \eta)^{-2\delta_\AdS(\G)}}$ does not depend on the point $x$. Indeed, 
$$\frac{d\nu_x (\xi) d\nu_x(\eta)}{d\nu_y (\xi) d\nu_y(\eta)} = e^{-\delta_\AdS(\G)(\beta_\xi(x,y) +\beta_\eta (x,y))}$$
And 
 \[ \frac{d_x(\xi,\eta)^2}{d_y(\xi,\eta)^2}=\left|\frac{\langle \txi\vert\teta\rangle_{n,2}}{\langle\txi\vert \tx\rangle_{n,2}\langle \tx\vert\teta\rangle_{n,2}}        \frac{\langle\txi\vert \ty\rangle_{n,2}\langle \ty\vert\teta\rangle_{n,2}}{\langle \txi\vert\teta\rangle_{n,2}} \right|  = \left| \frac{\langle\txi\vert \ty\rangle_{n,2}\langle \ty\vert\teta\rangle_{n,2}}{\langle\txi\vert \tx\rangle\langle \tx\vert\teta\rangle_{n,2}} \right|      .  \]
Finally from the definition of $\beta$ we have :
$$\frac{\pscal{\txi}{\ty}_{n,2}}{\pscal{\txi}{\tx}_{n,2}  }=e^{-\beta_{\xi} (x,y)} \quad \text{and}\quad \frac{\pscal{\teta}{\ty}_{n,2}}{\pscal{\teta}{\tx}_{n,2}  }=e^{-\beta_{\eta} (x,y)}.$$
Hence
$$\frac{d_x(\xi,\eta)^{2\delta}}{d_y(\xi,\eta)^{2\delta}} = e^{-\delta (\beta_{\xi} (x,y) +\beta_{\eta} (x,y))}.$$
The same kind of computations shows that $\mu$ is also $\G$-invariant. 

Thanks to previous computations, we see that $N^1C(\Lambda)$ carries a measure, invariant by $\G$ as well as by $\phi_t^{\AdS}$. 
\begin{defi}
The following measure on $N^1C(\Lambda)\simeq\Lambda^{(2)}\times \R$ is called the \emph{Bowen-Margulis} measure :
$$dm(v) :=\frac{d\nu_x(v^-) d\nu_x(v^+) dt}{d_x(v^-, v^+)^{-2\delta_\AdS(\G)}}.$$
It is invariant by $\G$ and $\phi_t^{\AdS}$. 
\end{defi}
In other words, let $f:N^1(C(\Lambda))\tv \R$ be a continuous function with compact support, and let $t\tv c_{(\xi \eta)}(t)$ be a parametrization of the $\AdS$ geodesic $(\xi \eta)$. Then $$\int_{N^1C(\Lambda)} f(v) dm(v) : =\int_{\Lambda^{(2)} } \int_\R f(c_{(\xi\eta)}(t) ) dt d\mu(\xi,\eta).$$
We easily see in this formulation that it is invariant by the geodesic flow.

Remark that $N^1C(\Lambda)$ is invariant by $\G$, and as a closed subset in $T^1C(\Lambda)$, $N^1C(\Lambda)/\G$ is compact. The Bowen-Margulis measure descends to a $\phi_t^{\AdS}$-invariant finite measure whose support is $N^1C(\Lambda)/\G$, still denoted by $m$. 

{\color{black}{
Recall  Birkhoff's ergodic theorem : 
\begin{theorem}
Let $(X, \mathcal{B}, \mu)$ be  a probability space and let $\phi_t \, :\,  X\tv X$ be measure preserving flow. Let $\mathcal{I} $ denote the $\sigma$-algebra of $\phi_t$-invariant sets. Then for every $f\in L^1(X,\mathcal{B}, \mu)$ we have 
$$ \lim_{T\tv+\infty}  \frac{1}{T} \int_0^T f(\phi_t x) dt \tv E(f, \mathcal{I}),$$
for $\mu$-a.e. $x\in X$. Here $ E(f, \mathcal{I})$ denotes the conditional expectation of $f$ with respect to the $\sigma$-algebra $\mathcal{I}$. 
\end{theorem}	}}

\begin{theorem}
The geodesic flow is ergodic with respect to the Bowen-Margulis measure. 
\end{theorem}

\begin{proof}
{\color{black}{
Recall the following characterization of ergodicity: a dynamical system is ergodic if and only if any $L^1$ invariant function is constant almost everywhere. 

The proof of ergodicity is based on the the so-called Hopf's argument. Let $f$ be a measurable function on $N^1C(\Lambda)/\Gamma$.  We consider the Birkhoff means : 
$$\Phi_f^s(v) := \lim_{T\tv+\infty}  \frac{1}{T} \int_0^T f(\phi_t^{\AdS} v) dt,$$
$$\Phi_f^u(v) := \lim_{T\tv+\infty}  \frac{1}{T} \int_0^T f(\phi_{-t}^{\AdS} v) dt.$$
Thanks to Birkhoff's ergodic theorem the functions $\Phi_f^u$ and $\Phi_f^s$ exist $m$-a.e. and are invariant by the geodesic flow. Remark that since $\phi_t^\AdS$ is invertible, the $\phi_t^\AdS$-invariant $\sigma$-algebra is equal to the $\phi_{-t}^\AdS$-invariant $\sigma$-algebra, and therefore, $\Phi_f^u$ and $\Phi_f^s$  are equal $m$-a.e.

We need to prove that those two functions are constant almost everywhere.  Since continuous functions are dense in the set of $L^1$ functions, we can assume that $f$ is continuous. 

We now remark that $\Phi_f^{u}$ is constant along unstable leaves of $N^1C(\Lambda)$, that is $W^{u}(v) := \{w=(v^-,x,t) \in N^1 C(\Lambda) \, | \, x\in \Lambda\setminus \{v^+\} \}$ and $\Phi_f^{s} $ is constant along the stable leaves of $N^1 C(\Lambda)$, that is $W^{s}(v) := \{w=(x,v^+,t) \in N^1 C(\Lambda) \, | \, x\in \Lambda\setminus \{v^-\} \}$.
}}
Since the measure is a product measure,  it is a consequence of Fubini's Theorem that $\Phi_f^u$ and $\Phi_f^s$ are constant $m$-almost everywhere. This concludes the proof. 
\end{proof}

\paragraph{Remark concerning notations} 

{\color{black}{
In the rest of this section we will have to make a clear distinction between the different metrics.
The intrinsic distance on a complete $\G$-invariant Riemannian hypersurface $\St$ will be denoted by $d_{\St}$ and the corresponding balls by $B_\St(x,R)\subset \St$.}}

\subsection{Definition of a cocycle}
The proof in the anti-de Sitter setting goes in the opposite way as in  the hyperbolic case, in particular in the $\AdS$ setting, we need to consider a subaditive  cocycle living on $N^1C(\Lambda)$ that compares the distance with a given Cauchy hypersurface. For this we need to define a retraction map from $C(\Lambda) $ to $\St$.

Let $\St$ be a negatively curved Riemannian submanifold of $C(\Lambda)$ which is a Cauchy hypersurface, i.e.  a topological hypersurface in $C(\Lambda)$ which is intersected by every inextensible causal curve exactly once (causal curves are smooth curves admitting lifts $c(t)$ to $\R^{n,2}$ satisfying $\pscalc{\dot c(t)}{\dot c(t)}\leq 0$ and $\dot c(t)\neq 0$ for all $t$). A smooth Cauchy hypersurface is in particular a Riemannian manifold. The negative curvature assumption on $\St$ will allow us  to consider Patterson-Sullivan measures on $\St$ for the induced Riemannian metric. \\

Remark: In dimension $3$, we allow $\St$ to be bent along geodesic lamination. It includes in this case two examples: the boundaries of the convex core which are isometric to $\Hyp^2$, bent along laminations and the unique maximal surface. \\


We now define the map  $f$ from $C(\Lambda)$ to $\St$:
\begin{defi}\label{def - definition of  the retraction f}
Let $V$ be a time-like vector field in $E(\Lambda)/\G$, we still call $V$ its lift to $E(\Lambda)$.
For any $x$ in $C(\Lambda)$ we call $f(x)$ the intersection of the integral curve of $V$ starting at $x$ and $\St$. This is well defined since $\St$ is a Cauchy hypersurface. 
\end{defi}

Let $(\phi_t)$ be the geodesic flow on $T^1C(\Lambda)$. We may write $\phi_t^\AdS$ if we want to stress that it is the geodesic flow on $\AdS$ and similarly $\phi_t^{\St}$ for the geodesic flow on $\St$.  Let $\pi : T^1 C(\Lambda) \tv C(\Lambda)$ denote the projection. For any $v\in T^1 C(\Lambda)$ we define the following cocycle: $$a(v,t)  = d_{\St} (f(\pi \phi^\AdS_t (v)) , f(\pi v ) ).$$ It is subadditive: 
\begin{eqnarray*}
a(v,t_1+t_2)& =& d_{\St} (f(\pi \phi^\AdS_{t_1+t_2} (v)) , f(\pi v ) )\\
					&\leq &  d_{\St} (f(\pi \phi^\AdS_{t_1+t_2} (v)) , f(\pi \phi^\AdS_{t_1} v ) )+d_{\St} (f(\pi \phi^\AdS_{t_1} (v)) , f(\pi v ) ) \\ 
					& \leq &  a(\phi^\AdS_{t_1} v, t_2)+a(v,t_1).
\end{eqnarray*}
Since $a$ is $\G$-invariant  it defines a subadditive cocycle on $N^1 C(\Lambda)/\G$, still denoted by $a$.\\

{\color{black}{
Recall  Kingman's subadditive ergodic theorem:
\begin{theorem}\cite{kingman1968ergodic}
Let $(\Omega,\mu, T_t)$ be a measure preserving system. Let $a_t$ be a  subbaditive family of $L^1$ cocycles ie. $a_{t+s} \leq a_t + a_t\circ T_s$, then for almost every $\omega \in \Omega$ the limit 
$\lim_{t\tv \infty} \frac{1}{t}a_t(\omega)=g(\omega) $ exists, $g$ is $T_t$ invariant and  $L^1(\Omega)$.

 Moreover if $T_t$ is ergodic with respect to $\mu$ then $g$ is constant almost everywhere.

\end{theorem}

We apply this dynamical theorem to our context and gives:}}
\begin{theorem}\label{th - ergodicity implies a(v,t)/t has almost everywhere a limit}
Les $\mu$ be a $\phi^\AdS_t$ invariant probability measure on $N^1 C(\Lambda)/\G $. Then 
$$I_\mu(\Sigma, v) := \lim_{t\tv \infty} \frac{a(v,t)}{t}$$
exists for $\mu$ almost $v\in N^1 C(\Lambda)/\G $ and defines a $\mu$-integrable function  on $N^1 C(\Lambda)/\G$, invariant under the geodesic flow. 

Moreover if $\mu$ is ergodic $I_\mu(\Sigma,v)$ is constant $\mu$-almost everywhere. In this case, we write $I_\mu(\Sigma)$
\end{theorem}

Since we proved that the Bowen-Margulis measure is ergodic, we have that $I_m(\Sigma,v)$ is constant $m$-almost everywhere and we write $I_m(\Sigma)$. We can state the main theorem of this section: 
\begin{theorem}\label{th - inequality h<I delta}
Let $\St$ be a Cauchy hypersurface whose induced metric has negative curvature. 
Let $h(\Sigma)$ be the volume entropy of $\St$.  Let $m$ be the Bowen-Margulis measure on $N^1C(\Lambda)/\G$. Then 
\begin{equation}\label{eq - th inequality h <idelta}
\delta_\AdS(\G) \leq  I_{m} (\Sigma) h(\Sigma).
\end{equation}
Moreover, the equality holds  if and only if the marked length spectra of $\Sigma=\St/\G$ and $M=E(\Lambda)/\G$ are proportional and the proportionality constant is given by  $I_{m}(\Sigma)$.
\end{theorem}

We will prove that $I_m(\Sigma)\leq 1$ and therefore we have $\delta_\AdS(\G)\leq h(\Sigma).$ Moreover, in dimension $3$, applying this theorem to the boundary of the convex core, and using Theorem \ref{th - l'exposant critique = dim de hausdorff}, we will obtain the rigidity result announced: 
\begin{coro}\label{cor - rigidity in Fuchsian case}
Let $\G$ be a quasi-Fuchsian group in $PO(2,2)$ then 
$\Hdim_{d_x}(\Lambda)\leq 1$ with equality if and only if $\G$ is Fuchsian.
\end{coro}

The following sections will be dedicated to the proof of Theorem \ref{th - inequality h<I delta}.

\subsection{Proof of the inequality in Theorem \ref{th - inequality h<I delta}}\label{subsection - inequality}
{\color{black}{In this section we prove the inequality of Theorem \ref{th - inequality h<I delta}, the proof works in any dimension.  Proposition \ref{pr - f is 1 lip} is the key for all the subsequent geometric arguments. As such, it is the main difference with the hyperbolic setting and it gives us a better understanding of the link between the ambient Lorentzian geometry  and the intrinsic geometry of Cauchy hypersurfaces.}}

In order to compare distances on the hypersurface $\St$ and  in $\AdS$ we will need the following proposition. 
\begin{prop}\label{prop- les distances ads et hilbert sont quasi-isometriques} Let $\Omega$ be a $\Gamma$-invariant open bounded convex set that contains $C(\Lambda)$, and denote by $d_H$ the Hilbert distance of $\Omega$. There is a constant $L>0$ such that $\frac{1}{L}d_H(x,y)\leq d_{\AdS}(x,y)\leq Ld_H(x,y)$ for all $x,y\in \partial_+C(\Lambda)$, where $\partial_+C(\Lambda)$ is the future boundary of the convex core.
\end{prop}

\begin{proof} Recall that a Hilbert metric is a Finsler metric, where affines lines are geodesics \cite{crampon2011dynamics}.  Denote by $N$ the Finsler norm on $T\Omega$ associated to the Hilbert metric.\\
Given $x\in C(\Lambda)$ and $v\in T_x\AdS^{n+1}$, we denote by $ v_\pm\in\partial C(\Lambda)$ \footnote{In general, $v_\pm\neq v^\pm$. There is equality if and only if $v_\pm\in \Lambda$.} the intersections of $\partial C(\Lambda)\subset \overline{\AdS^{n+1}}$ with the geodesic generated by $v$. Remark that, from the definition, $v_\pm $ does not necessary belong to the boundary of $\AdS$. \\
 Let $V=\{ v\in T\AdS^{n+1} : N(v)=1, v_\pm \in\partial_+C(\Lambda)\cup\Lambda\}$.\\
 First, notice that $V$ is $\Gamma$-invariant. Let us show that the action of $\Gamma$ on $V$ is cocompact. Let $K\subset C(\Lambda)$ be a compact set such that $\Gamma.K=C(\Lambda)$, and let $V_K=\{v\in V : v\in T_x\AdS^{n+1}, x\in K\}$. Since the maps $ v_\pm$ are continuous, it follows that $V_K$ is a closed subset of the unit tangent bundle over $K$, therefore is compact. Since $\Gamma.V_K=V$, it follows that the action of $\Gamma$ on $V$ is cocompact.  \\
As a consequence, the $\Gamma$-invariant function $ \langle v\vert v\rangle_{n,2}$ is bounded, and bounded away from $0$ (any vector in $V$ is spacelike because $\partial_+C(\Lambda)$ is a Cauchy hypersurface): let $L>0$ be such that $\frac{1}{L^2}\leq \langle v\vert v\rangle_{p,q+1}\leq L^2$ for all $v\in V$.\\
Let $x,y\in \tilde\Sigma$. Denote by $x(t)$ the anti-de Sitter geodesic going from $x$ to $y$, so that $d_{\AdS}(x,y)=\int_0^1 \sqrt{\langle \dot x(t)\vert \dot x(t)\rangle_{n,2}} dt$ and $d_H(x,y)=\int_0^1 N(\dot x(t))dt$. The tangent vectors $\dot x(t)$  are multiples of vectors of $V$, hence $\frac{N(\dot x(t))}{L} \leq \sqrt{\langle \dot x(t)\vert \dot x(t)\rangle_{n,2}} \leq L N(\dot x(t))$. Integrating this inequality yields the proposition.

\end{proof} 

An example of such an open convex set $\Omega$ is the whole black domain, except in the Fuchsian case. But the rigidity theorem is trivial in this case. It is shown in \cite{DGK} that it is always possible to choose $\Omega$ strictly convex with $\mathcal C^1$ boundary.\\

Recall that we defined in Definition \ref{def - definition of  the retraction f} a retraction $ f\, :\, C(\Lambda)\to \St$, which is obtained by following the integral curves of a fixed $\Gamma$-invariant  timelike vector field $V$. We can now control the distortion of the distance by the map $f$:

\begin{prop}\label{pr - f is 1 lip}
The function $f : C(\Lambda)\tv \St$ is a quasi-isometry, quasi 1-Lipschitz ie: $\exists K_1>1\, ,\,  K_2>0, \, \forall x,y \in C(\Lambda) :$
$$\frac{1}{K_1}d_{\AdS}(x,y)-K_1 \leq d_{\St} (f(x),f(y)) \leq d_{\AdS}(x,y) + K_2.$$
\end{prop}

\begin{proof}

First remark that  we can suppose that $x,y\in \St$. Indeed 
if $x,y\in C(\Lambda)$, since $d_{\AdS}(x,f(x)) =d_{\AdS}(y,f(y)) =0$ then $d_{\AdS}(x,y)-2k_\G\leq d_\AdS (f(x),f(y)) \leq d_{\AdS}(x,y) +2k_\G$ where $k_\G>0$ is given by the triangle inequality, Theorem \ref{th - triangle inequality}.

Let us prove the left  inequality. The group $\G$ acts properly discontinuously and cocompactly  on $\St$ as well as any open convex subset $\Omega$ containing $C(\Lambda)$ as in Proposition \ref{prop- les distances ads et hilbert sont quasi-isometriques}. Hence by Svarc-Milnor Lemma, \cite[Part I, Prop 8.19]{BridsonHaefliger} (that we can adapt in a straightforward way to $d_{\h^{p,q}}$) $\St$ is quasi-isometric to this neighborhood of $C(\Lambda)$, which is by Proposition \ref{prop- les distances ads et hilbert sont quasi-isometriques} Lipschitz equivalent to the $\AdS$ distance. 

We now prove the right inequality. If $x,y\in \St$, take any 3-space $E$ of signature $(1,2)$ containing $(x,y)$, its intersection with $\ADS^{n+1}$ is a copy of $\ADS^2$. In an affine chart,  it is isometric to $\left(\R\times (-\pi/2 ,\pi/2) , dt^2-\cosh^2(t) d\theta\right), $ where  $dt^2$ is the Lorentzian distance on the space like geodesic $(x,y)$. Now since $\St$ is a Cauchy hypersurface, the intersection of $E$ with $\St$ is a graph over $\R$, in particular the length $\ell =\int_0^{d_{\AdS}(x,y)} \sqrt{1-\cosh^2(t) \theta'(t)}dt\leq d_{\AdS}(x,y)$ of the curve from $x$ to $y$ in $E\cap \St$ is smaller than $d_{\AdS}(x,y)$. The  $\St$-distance between $x$ and $y$ is smaller than $\ell$, indeed since $\St$ is Riemannian the distance between two points its the minimum of the length of all curves joining them,  hence it is smaller than $d_{\AdS}(x,y)$.

\end{proof}

We give a series of simple corollaries that we will use during the proof of Theorem \ref{th - inequality h<I delta}. 
Corollaries \ref{cor - comparaison des boules}, \ref{cor les ombres sont comparables} and \ref{cor covering}  are the equivalents in our settings of well known comparison results between shadows and balls. They are good examples of what kind  of behavior we have to control using $\AdS$ geometry in this Lorentzian setting. 

\begin{coro}\label{pr - I geq  1}
For every ergodic $\mu$, 
$$I_\mu(\Sigma)\leq 1.$$ 
\end{coro}

\begin{proof}
Let $v$ be a typical vector for $\mu$  we have 
\begin{eqnarray*}
\frac{a(v,t)}{t} &=& \frac{d_{\St} (f(\pi \phi_t (v)) , f(\pi v ) )}{d_{\AdS} (\pi \phi_t (v) , \pi v  )}\\
						&\leq & 1+\frac{K}{t},
\end{eqnarray*}
taking the limit concludes the proof. 
\end{proof}

Since $\St$ is supposed to be negatively curved, we can apply the Morse Lemma and obtain the following result: 
\begin{coro}\label{cor  - f geodesic are quasi-geodesic}
There exists $K>0$ such that, for all $p\in C(\Lambda)$ and all $ \xi\in \Lambda$, the image by $f$ of the geodesic ray $[p,\xi)$,  $f([p,\xi)) $, is at distance ($d_{\St}$)  at most $K$ of the unique geodesic on $\St$ from $p$ to $\xi.$
\end{coro}
\begin{proof}
Let $\xi\in \Lambda$ and  $v_p(\xi)$ be the  vector in $T_p^1 C(\Lambda) $ such that $\lim_{t\tv \infty} \phi^{\AdS}_t (v_p(\xi)) =\xi$. Consider the curve $c : \R^+ \tv \St$ defined by $c(t):= f(\pi\phi^{\AdS}_t (v_p(\xi)))$. Then from Proposition \ref{pr - f is 1 lip}, for all $t,s\in \R^+$ we have 
$$ \frac{|t-s|}{K_1} -K_1 \leq d_{\St} (c(t),c(s) ) \leq |t-s|+K_2.$$
Hence $c$ is a quasi-geodesic on $\St$. Thanks to the Morse Lemma, the quasi-geodesic $c$  is at distance ($d_{\St}$) at most $K$ of a unique geodesic \cite{ballmann1985manifolds}. This constant $K$ depends only on $\St$ and $K_1,K_2$, in particular it does not depends on $p$ and $\xi$. 
\end{proof}

For the next two corollaries we can give explicit constants : $K_1,K_2$ are given by Proposition \ref{pr - f is 1 lip} and $K$ by the previous Lemma.

\begin{coro}\label{cor - comparaison des boules}
For all $R$ and all $x\in C(\Lambda)$:
$$B_{\AdS} (x, R ) \cap \St \subset B_{\St} (f(x),R+K_2).$$

For all $R$ and all $x\in \St$:
$$B_{\St} (x, R ) \subset B_{\AdS} (x,K_1 R +K_1^2.).$$

\end{coro}

\begin{proof}
We prove the first inclusion. Let $y \in B_{\AdS} (x, R ) \cap \St$, since $y\in \St$, $f(y)=y$ and we have:
\begin{eqnarray*}
d_{\St} (f(x),y) &=&d_{\St} (f(x),f(y))\\
						&\leq & d_{\AdS} (x,y) +K_2=R+K_2,
\end{eqnarray*}
thanks to Proposition \ref{pr - f is 1 lip}. 

\bigskip

We prove the second inclusion. Let $y \in B_{\St} (x,R) $ then 
\begin{eqnarray*}
d_{\AdS}(x,y) &\leq & K_1 d_{\St} (f(x),f(y))+K_1^2 \\
						&\leq & K_1 R +K_1^2.
\end{eqnarray*}

\end{proof}

We fix once for all a point $p\in C(\Lambda)$. The Lorentzian shadows  for the metric $d_{\h^{p,q}}$ centered at $x$  and  seen from $p$ will be denoted by  $\mathcal{S}_{\AdS} (x,R)$.We have  $\xi \in\mathcal S_{\AdS} (x,R)$, if and only if there exists the intersection between the geodesic ray $[p \xi)$ and the $\AdS$ balls $B_\AdS(x,R)$ is non-empty. In a similar way, the shadows in $\St$ are supposed to be centered in $f(p)$ and will be denote by  $\mathcal S_{\St}(y,R)$.

\begin{coro}\label{cor les ombres sont comparables}
For all $R>0$ and all $x\in C(\Lambda)$ 
$$\mathcal S_{\AdS} (x,R)\subset \mathcal S_{\St}(f(x),R+K+K_2).$$

For all $R>0$ and  all $x\in \St$ 
$$\mathcal S_{\St} (x,R) \subset \mathcal S_{\AdS}(x,K_1 (R+K) + K_1^2),$$
where $K_1,K_2$ are given by Proposition \ref{pr - f is 1 lip} and $K$ by Corollary \ref{cor  - f geodesic are quasi-geodesic}.
\end{coro}

\begin{proof}
Let $\xi$ be in $\mathcal S_{\AdS} (x,R)$, then by definition the $\AdS$ geodesic 
$[p\xi)_{\AdS}$ intersects $B_{\AdS} (x,R)$. By  Corollary \ref{cor - comparaison des boules}, this implies that 
$$f([p\xi)_{\AdS}) \cap B_{\St} (f(x), R+K_2)\neq \emptyset.$$
This implies that the unique geodesic from $f(p)$ to $\xi $ intersects $B_{\St} (f(x), R + K_2+K)$, where $K$ is the constant in Morse Lemma,  Corollary \ref{cor  - f geodesic are quasi-geodesic}.  In other words, $\xi$ belongs to $ \mathcal S_{\St} (f(x),R + K_2+K)$.

\bigskip

Similarly, let $\xi$ be in $\mathcal S_{\St} (x,R)$, then by definition the geodesic $[p\xi)_{\St}$ of $\St$ for the induced metric  intersects $B_{\St}(x,R)$. Let $[p\xi)_{\AdS}$ be the $\AdS$ geodesic, then by Corollary \ref{cor  - f geodesic are quasi-geodesic} $f([p\xi)_{\AdS}) $ is at distance at most $K$ of $[p\xi)_{\St}$, therefore there exists  $z\in [p\xi)_{\AdS}$ such that $d_{\St}(f(z), x) \leq R+K$. We then have 
\begin{eqnarray*}
d_{\AdS}(z,x) &\leq & K_1 d_{\St} (f(z), f(x)) +K_1^2\\
						&\leq & K_1 (R+K) + K_1^2
\end{eqnarray*}
\end{proof}

The last corollary concerns covering of subsets in $\AdS$, this is clearly a problem that appears since we do not work in a Riemannian setting: 
\begin{coro}\label{cor covering}
For all $R>4K_1^2$ there exists $\epsilon>0$ such that for all subset $A\subset C(\Lambda)$, there exists a covering of $A$ by $AdS$ balls $B_{\AdS}(x_i,R)$, $x_i$ in $A$, and such that $B_{\St}(f(x_i),\epsilon)\cap B_{\St}(f(x_j),\epsilon)=\emptyset$ for all $i\neq j$. 
\end{coro}
\begin{proof}
Let $A\subset B_{\AdS}(x_i,R)$ be a covering such that $d_{\AdS} (x_i,x_j) >R$ for all $i\neq j$ (to produce such a covering take by induction  $x_{n+1}\in A\setminus \cup_{i=1}^n  B_{\AdS}(x_i,R)$.) Let $\epsilon \in (0,\frac{R}{4K_1}-K_1)$. 

Pick $z \in B_{\St}(f(x_i),\epsilon) \cap B_{\St}(f(x_j),\epsilon)$.  Then by Proposition \ref{pr - f is 1 lip} we have
\begin{eqnarray*}
d_{\h^{p,q}}(x_i,x_j) &\leq & K_1 d_{\St} (f(x_i),f(x_j)) +K_1^2\\
							&\leq & 2\epsilon K_1 +K_1^2\\
							&\leq & R/2.
\end{eqnarray*}
Hence $x_i=x_j$. 
\end{proof}

We will prove the inequality of Theorem \ref{th - inequality h<I delta}, using comparaison of volume. The volume for a pseudo-Riemannian manifold is defined by the canonical volume form. In local coordinates it is defined by 
$\omega = \sqrt{|\det(g)|} dx^1\wedge...\wedge dx^n$.

The following lemma is  classical in Riemannian setting, the proof is similar for $\AdS$: 
\begin{lemme}\label{lem - entropy egal volume des boules}
$$\delta_\AdS( \G)  = \lim_{R\tv \infty} \frac{1}{R}\log \vol(B_{\AdS} (o,R) \cap C(\Lambda)).$$
 
\end{lemme}
\begin{proof}
It is a consequence of the cocompactness of the action on $C(\Lambda)$. It is sufficient to cover $B_{\AdS} (o,R) \cap C(\Lambda)$ with translates of a compact fundamental domain and then taking the limit. 
\end{proof}

Since the proof consists in comparing the Patterson-Sullivan measures on $\Lambda$ associated to $\AdS$ and to $\St$ we will name these two measures $\nu^{\AdS}$ and $\nu^{\St}$ respectively.  Similarly, the two Gromov functions defined in Subsection \ref{subsec - gromov distance} on $\Lambda$ associated to $\AdS$ and $\St$ distances  will be denoted by $d_p^\AdS(\cdot, \cdot) $ and $d_p^\St(\cdot, \cdot) $ for $p\in C(\Lambda)$ or $p\in \St$ depending on the context. \\

The next lemma allows to consider dynamical behavior at a fixed point:
\begin{lemme}\label{lem - la structure produit de Bowen margulis permet de se restreindre a PS}
For all $p\in C(\Lambda)$ and for  $\nu^{\AdS}_p$-a.e. $\xi\in \Lambda$, 
$$\lim_{t\tv \infty} \frac{a(v_p(\xi),t)}{t}=I_{m}(\St).$$
\end{lemme}
%
%

\begin{proof}
Recall $R^{1}C(\Lambda)$ is the set of unit recurrent vectors, ie. space-like vectors in $TC(\Lambda)$ of norm $1$, such that $v^+\in \Lambda$.  In particular we have  $R^{1}C(\Lambda)\supset N^1C(\Lambda)$. We then define 
$$P=\left\{v\in R^1 C(\Lambda)\, |\, \lim_{t\tv \infty} \frac{a(v,t)}{t}=I_m(\Sigma)\right\}.$$ 
Let $p\in C(\Lambda)$ we define $A :=\{ \xi \in \Lambda\, |\, v_p(\xi) \in P\}$ and we want to show that $A$ is of full $\nu^\AdS_p$ measure. Remark that $A$ is  a $\G$-invariant subset of $\Lambda$, therefore by the ergodicity of $\nu^{\AdS}_p$ it  is of full or zero $\nu^{\AdS}_p$ measure.


Let $v\in P$, that we write in Hopf coordinates: $v=(v^-,v^+, t_0)$, in particular here, $v^-$ is not necessary in $\Lambda$. Let   $w$ in the same "stable leaf" as $v$, that is $w=(\eta, v^+, s)$, with $\eta \in \partial U(p)$ and $s\in \R$.  We have:
$$a(v,t) \leq d_{\St} (f(\pi \phi_t (v)) ,f(\pi \phi_t (w)) )+ a(w,t)+d_{\St} ( f(\pi w ) , f(\pi v ).$$
By Lemma \ref{lem - Two space like rays with the same endpoint are at bounded distance.}  and Proposition \ref{pr - f is 1 lip}, $d_{\St} (f(\pi \phi_t (v)) ,f(\pi \phi_t (w)) )$ is uniformly bounded. This implies that $\lim_{t\tv \infty} \frac{a(v,t)}{t} \leq \lim_{t\tv \infty} \frac{a(w,t)}{t}$. Interchanging the role of $v$ and $w$ we see that $w\in P.$ 
Hence 
$$N^1C(\Lambda)\cap P = \{ (\xi, \eta, t) \, |\, \eta\in A, \xi \in \Lambda\setminus\{\eta\}, t\in \R\}. $$
By ergodicity of $m$ we know that $P\cap N^1C(\Lambda)$ is of full $m$ measure (cf. Theorem \ref{th - ergodicity implies a(v,t)/t has almost everywhere a limit}), and therefore by the product structure of $m$, $A$ is of full $\nu^{\AdS}_p$-measure.

\end{proof}

We are finally ready to prove the inequality in Theorem \ref{th - inequality h<I delta}.
\begin{proof}[Proof of the inequality in Theorem \ref{th - inequality h<I delta}]
Let  $p\in C(\Lambda)$. By the previous lemma, for all $\kappa>0$ and $T>0$ we define the set 
$$A_p^{T,\kappa} = \left\{ \xi \in \Lambda| \left|  \frac{a(v_p(\xi),t)}{t}-I_{m}(\Sigma) \right| \leq \kappa, \quad t\geq T\right\}.$$
For all $d\in ]0,1[$ and all $\kappa>0$, there exists $T>0$ such that $\nu_p^{{\AdS}}(A_p^{T,\kappa}) \geq d$. Let $c_{p,\xi} (t) =  \pi(\phi_t^\AdS (v_p (\xi)) )$ be the geodesic of $\AdS$. For $t>T$ consider the subset $\{c_{p,\xi}  (t) | \xi \in A_p^{T,\kappa}\} \subset S_{\AdS}(p,t)$ of the Lorentzian sphere of radius $t$ and center $p$ in $\AdS$. 

Choose a covering of this subset by balls $B_{\AdS}(x_i,R)$ with $R$ sufficiently large such that both Corollary \ref{cor covering}  and Shadow Lemma for $\nu^{\AdS}_p$ apply. 
Then, by the local behavior of $\nu^{\AdS}_p$, there exists a constant $c>1$ independent of $t $ such that 
$$\frac{1}{c} e^{-\delta t}  \leq \nu_p(\mathcal S_{\AdS}(x_i,R)))\leq c e^{-\delta t}.$$
It is clear by definition that $A_p^{T,\epsilon}  \subset \cup_{i\in I} \mathcal S_{{\AdS}}(x_i,R))$ and therefore, 
\begin{eqnarray}\label{eq -1 preuve de l'inégalité}
d \leq  \nu_p^{{\AdS}} \left(  \cup_{i\in I} \mathcal S_{{\AdS}}(x_i,R))  \right)\leq \sum_{i\in I} \nu_p^{{\AdS}} (\mathcal S_{\AdS}(x_i,R))) \leq c \Card(I) e^{-\delta t}.
\end{eqnarray}

By the property of the covering in Corollary \ref{cor covering} the balls $\left\{ B_{\St} (f(x_i), \epsilon)\right\}_{i\in I}$ are disjoint. Moreover $d_{\St}(f(p),f(x_i) ) \leq t(I_{m}(\St) +\kappa))$ by definition of $A_p^{T,\kappa} $. Hence the balls $B_{\St} (f(p),  t(I_{m}(\St) +\kappa) +\epsilon)$ contains the disjoint union $\sqcup_{i\in I} B_{\St} (f(x_i), \epsilon)$.
Let $v:=\min_{i\in I} \Vol B_{\St} (f(x_i), \epsilon).$ Then 
\begin{eqnarray}\label{eq -2 preuve de l'inégalité}
\Vol B_{\St} (f(p),  t(I_{m}(\St) +\kappa) +\epsilon) \geq v \Card (I).
\end{eqnarray}

By Equations (\ref{eq -1 preuve de l'inégalité}) and (\ref{eq -2 preuve de l'inégalité}), we have 
\begin{eqnarray*}
e^{\delta t} &\leq &\frac{c}{d} \Card(I)\\
					&\leq &\frac{c}{v d} \Vol B_{\St} (f(p),  t(I_{m}(\St) +\kappa) +\epsilon).
\end{eqnarray*}
We conclude using Lemma \ref{lem - entropy egal volume des boules} since $\kappa$ is arbitrary. 
\end{proof}

\subsection{Proof of rigidity}
{\color{black}{
We will  use the following ergodic theoretic result of G. Knieper: 
\begin{theorem}\cite{knieper1995volume}
Let $(X,\mu, T)$ be an ergodic measure preserving system. Let $a : X\times \N\tv \R$ a measurable subadditive cocycle. Assume that there exists a constant $L>0$ such that for all $n,l\in /N:$
$$a(x,n)+ a(T^n x, l) \leq a(x,n+l) +L.$$
Then there is a constant $\alpha\in \R$ such that for $\mu$ almost all $x\in X$ we can find a sequence $n_j\tv \infty$ such that 
$$| a(x,n_j) -n_j\alpha | \leq 2L.$$
In this case, we have necessarily  $\lim_{n\tv \infty}\frac{a(x,n)}{n}= \alpha $ for almost every $x$. 
\end{theorem}}}

We prove that the cocycle $a$ previously defined satisfies the hypothesis of this theorem: 
\begin{prop}\label{cor - a is sous additif}
There exists $C>0$ such that for every $v\in N^1C(\Lambda)$, $t_1,t_2>0$ we have 
$$a(v,t_1) +a(\phi_{t_1}^{\AdS}(v),t_2) \leq a(v,t_1+t_2) +C$$
\end{prop}
\begin{proof}
This is a consequence of Corollary \ref{cor  - f geodesic are quasi-geodesic}. We fix $v\in N^1C(\Lambda)$ and show the inequality with a constant that does not depend on $v$. We denote by $\xi$ the endpoint of the geodesic ray tangent to $v$, that is $\xi :=v^+$.  Let $z_1$ be a point on the geodesic $[f(p)\xi)_{\St}$ such that $d_{\St} (z_1, f(\pi \phi_{t_1}^{\AdS} (v)) \leq K$. Let $z_2$ be a point on the geodesic $[f(p)\xi)_{\St}$ such that $d_{\St} (z_2, f(\pi \phi_{t_1+t_2}^{\AdS} (v))) \leq K$. Then 
$$a(v,t_1)= d_{\St} (f(p) , f( \pi \phi_{t_1}^{\AdS} (v)))\leq d_{\St} (f(p),z_1) +K  $$
and 
$$a(\phi_{t_1}^{\AdS}(v),t_2)=d_{\St} ( f( \pi \phi_{t_1}^{\AdS} (v)),f(\pi \phi_{t_1+t_2}^{\AdS} (v))) \leq d_{\St}(z_1,z_2) +2K.$$
Hence 
\begin{eqnarray*}
a(v,t_1) +a(\phi_{t_1}^{\AdS}(v),t_2) 
&\leq & d_{\St} (f(p),z_1) +K +d_{\St}(z_1,z_2) +2K\\
&\leq & d_{\St} (f(p),z_2) +3K \\
&=& a(v,t_1+t_2) +4K.
\end{eqnarray*}
The proposition follows with $C=4K$. 
\end{proof}


Therefore Knieper's Theorem gives: 
\begin{theorem}
There exists a constant $L$ such that for $m$ almost every $v\in N^1C(\Lambda)/\Gamma$, there is  a sequence $t_n\tv \infty $ such that :
$$|d_{\St}( f(p), f (\pi  \phi_{t_n} ^{\AdS}v) ) - I_{m} (\St) t_n | \leq L.$$
\end{theorem}
As in Lemma \ref{lem - la structure produit de Bowen margulis permet de se restreindre a PS}, using the product structure of $m$, we can pass from $m$ a.e. $v\in N^1 C(\Lambda)/\G$  to $\nu_p^{\AdS}$ a.e. $\xi\in \Lambda$ and we get: 
\begin{lemme}
There exists a constant $L$ such that for $\nu_p^{\AdS}$ almost all $\xi \in \Lambda$ there is a sequence $t_n\tv \infty $ such that 
$$|d_{\St}( f(p), f (\pi  \phi_{t_n} ^{\AdS}v_p(\xi)) ) - I_{m} (\St) t_n | \leq L.$$
\end{lemme}

\begin{prop}
If there is equality in Eq.(\ref{eq - th inequality h <idelta}), then $\nu^\St$ is absolutely continuous  with respect to $\nu^\AdS$.
\end{prop}

\begin{proof}
Let $\xi\in \Lambda $ be a generic point for $\nu_p^{\AdS}$ and set $y_n :=  \pi \phi_{t_n} ^{\AdS}v_p(\xi)$. From the previous lemma we have 
\begin{equation}\label{eq 1- prop égalité}
|d_{\St}( f(p), f (y_n)) - I_{m} (\St) t_n | \leq L.
\end{equation} 
Let $R$ be large enough for the shadow lemma to hold in both $\St$ and $\AdS$.  According to \ref{cor - comparaison des boules} there exists  $R',R''>R$  such that for all $x\in \St$ we have :
$$\mathcal S_{\AdS}(x,R)  \subset \mathcal{S}_{\St} (x,R') \subset \mathcal S_{\AdS} (x,R'').$$
Applying $\nu^{\AdS}$ and letting $x=f(y_n)$ we get : 
$$\nu^{\AdS} (\mathcal S_{\AdS}(f(y_n),R) ) \leq \nu^{\AdS} (\mathcal S_{\St}(f(y_n),R') )\leq \nu^{\AdS} (\mathcal S_{\AdS}(f(y_n),R'') ).$$
By the Shadow Lemma for $\nu^{\AdS}$ there exists $c>1$ such that 
$$\frac{1}{c}  e^{-\delta  d_{\AdS}(f(p),f(y_n))}  \leq \nu^{\AdS} (\mathcal S_{\St}(f(y_n),R') )\leq c e^{-\delta  d_{\AdS}(f(p),f(y_n))}   .$$
By the  Shadow Lemma for  $\nu^{\St}$ there exists $c_2>1$ such that 
$$\frac{1}{c_2} e^{-h(\St) d_{\St}(f(p)f(,y_n))} \leq \nu^{\St} ( \mathcal S_{\St}(f(y_n),R') ) \leq c_2  e^{-h(\St) d_{\St}(f(p),f(y_n))}.$$
From equation \ref{eq 1- prop égalité}, there exist $c_3>1$ such that 
$$\frac{1}{c_3}\leq \frac{e^{-h(\St) d_{\St}(f(p)f(,y_n))} }{ e^{-\delta  d_{\AdS}(f(p),f(y_n))} }\leq c_3$$
Hence there exists $c_4>1$ such that 
$$\frac{1}{c_4}\leq  \frac{\nu^{\AdS} (\mathcal S_{\St}(f(y_n),R') )}{\nu^{\St} ( \mathcal S_{\St}(f(y_n),R') ) } \leq c_4.$$
Since $\mathcal S_{\St}(f(y_n),R') )\tv_{n\tv \infty} \xi$, this concludes the proposition. 
\end{proof}

\begin{prop}
If the Patterson-Sullivan measures $\nu^{\Sigma}$ are absolutely continuous with respect to $\nu^{AdS}$ then the Gromov functions  on $\Lambda$ seen as $\partial C(\Lambda) \cap \partial \ADS^{n+1} $ or $\partial \St$ are Hölder equivalent. 
\end{prop}

\begin{proof}
Consider on $\Lambda^{(2)}$ the Bowen-Margulis currents defined by 
$$\mu_\St (\xi,\eta) = \frac{d\nu_p^\St (\xi) d\nu_p^\St(\eta)}{d^{\St}_p(\xi,\eta) ^{2h}}$$
$$\mu_{\AdS} (\xi,\eta) = \frac{d\nu_p^{\AdS} (\xi) d\nu_p^{\AdS}(\eta)}{d_p^{\AdS}(\xi,\eta) ^{2\delta}}.$$

By assumption $\nu_p^\St$ is absolutely continuous with respect to $\nu_p^{\AdS}$ , therefore  $\mu_\St$ is absolutely continuous with respect to  $\mu_{\AdS}$,  there exist a measurable function $f:\Lambda^{(2)} \tv \R$ such that  $\mu_\St=f(\xi,\eta) \mu_\AdS$. The ergodicity and the  $\G$-invariance implies that $f$ is constant almost everywhere and therefore there exist $c>0$ such that
$$\mu_\St=c\mu_{\AdS}.$$

Since $\nu_p^{{\St}}$ is absolutely continuous with respect to  $\nu_p^{\AdS} $ there exists an \textit{a priori} measurable function  $u :\Lambda \tv \R^+$ such that $\nu_p^{{\St}} (\xi) =u(\xi) \nu_p^{\AdS} (\xi)$. 
However, by definition of the measures $\nu_p^{{\St}}$ and $\nu_p^{\AdS} $, we can give an explicit formula for $u$, we have for all $\xi,\eta \in \Lambda^{(2)}$:
$$u(\xi)u(\eta) d_p^{\AdS}(\xi,\eta)^{\delta} = c d_p^\St (\xi,\eta)^{h}.$$
Let $\eta, \eta'\in \Lambda$, with $\eta \neq \eta'$. We see that $u$  is a continuous function on $\Lambda\setminus \eta$ : 
$u(\xi) := \frac{c d_p^\St (\xi,\eta)^{h}}{ u(\eta) d_p^{\AdS}(\xi,\eta)^{\delta} } $. Similarly it is continuous on let $\Lambda\setminus\eta'$. Therefore, $u$ is continuous on $\Lambda$.  By compactness, there exists $C>1$ such that $\frac{1}{C}\leq u(\xi) \leq C$. 
Finally we get what we stated 
$$\frac{c}{C^2} d_p^\St (\xi,\eta)^{h} \leq d_p^{\AdS}(\xi,\eta)^{\delta}  \leq C^2 c d_p^\St (\xi,\eta)^h.$$
\end{proof}

\begin{prop}\label{pr - gromov equivalent--> length spectra proportional}
If the two Gromov functions coming from the distance on $\St$ and on $C(\Lambda)$ are Hölder equivalent then the marked length spectra of $M=E(\Lambda)/\G$ and $\St/\G$ are proportional. 
\end{prop}

\begin{proof}
Let $g\in \G$ and $\xi \in \Lambda\setminus\{g^{\pm}\}$.Then 
$$[g^-, g^+,g(\xi),\xi]_{\AdS} = e^{\ell_{\AdS}(g)}$$
and
$$[g^-, g^+,g(\xi),\xi]_{\Sigma} = e^{\ell_{\Sigma}(g)}$$
By assumption on the Gromov  functions $d_\St,d_{\h^{p,q}}$ and from the definition of the cross-ratio, Eq. (\ref{eq - cross ratio}),  there exists $C>1$ such that for all $g\in \G$ we have
$$\frac{1}{C} e^{r \ell_{\AdS}(g) }\leq e^{\ell_\Sigma(g)} \leq C e^{r \ell_{\AdS}(g)}.$$
In particular when we look at the power $g^n$ of $g$, taking the $\log$, we get for all $n>0$ and all $g\in \G$ :
$$-\log(C) +rn\ell_{\AdS}(g) \leq n\ell_\Sigma (g) \leq \log(C)+rn\ell_{\AdS}(g).$$
We finish the proof by dividing by  $n$ and taking the limit : 
$$\ell_\Sigma (g) =r\ell_{\AdS}(g).$$
\end{proof}

Putting everything together we obtain the equivalent of Bowen's Theorem for $\AdS$ quasi-Fuchsian manifolds
\begin{coro}\label{cor - Bowen GHMC}If $\G\subset \PO(2,2)$ is quasi-Fuchsian, then
$\Hdim_{d_x}(\Lambda)\leq 1$ with equality if and only if $\G$ is Fuchsian. 
\end{coro}

\begin{proof}
By Theorem \ref{th - l'exposant critique = dim de hausdorff}, we know that $\Hdim_{d_x}(\Lambda) = \delta_\AdS(\G)$, so it is sufficient to prove the rigidity for $\delta_\AdS(\G)$. 

The connected components of the boundary of the convex core are isometric to $\Hyp^2$, therefore their volume entropy is $1$. 
Applying Theorem \ref{th - inequality h<I delta} and Proposition \ref{pr - I geq  1} to these surfaces we have $\delta\leq 1$ with equality if and only if the boundary of the convex core has the same length spectrum as $M=E(\Lambda)/\G$. Let $M$ be parametrized in the Mess' parametrization by $S_1$ and $S_2$. Recall that for a homotopy class of closed curved $c$, the geodesic length of $c$ in M is $\ell_{\AdS}(c) =\frac{\ell_1(c)+\ell_2(c)}{2}$ see \cite{glorieux2015asymptotic}.  Then, corollary \ref{cor - Bowen GHMC} is a consequence of the following hyperbolic geometry result showing that $S_1=S_2$. 
\end{proof}

\begin{lemme}
Let $S_1,S_2,S_3$ be three hyperbolic surfaces. Let $\ell_j$ be their corresponding length functions. If for all closed homotopy classes of closed curves $c$ we have $\ell_1(c) +\ell_2(c) =2 \ell_3(c)$ then $ S_1=S_2$ ($=S_3$).
\end{lemme}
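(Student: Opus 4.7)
The plan is to invoke Fricke--Thurston marked length spectrum rigidity to reduce the statement to proving $\ell_1 = \ell_2$, and then derive a contradiction from the assumption $S_1 \ne S_2$ by combining two classical results in Teichm\"uller theory: Wolpert's strict convexity of length functions along Weil--Petersson geodesics, and Thurston's domination theorem for marked length spectra.

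First I would reduce: by Fricke--Thurston, a closed hyperbolic surface is determined up to marked isometry by its marked length spectrum, so it is enough to show $\ell_1 = \ell_2$; granting this, $\ell_3 = \tfrac{1}{2}(\ell_1 + \ell_2) = \ell_1$ is immediate and $S_1 = S_2 = S_3$ follows.

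Assume for contradiction $S_1 \ne S_2$, and let $\gamma : [0,1] \to \Teich(S)$ be the (unique) Weil--Petersson geodesic joining $S_1$ to $S_2$, with midpoint $M = \gamma(1/2)$. By Wolpert's theorem, for every free homotopy class of a closed curve $c$ the function $t \mapsto \ell_{\gamma(t)}(c)$ is strictly convex. Using only its convexity at $t = 1/2$ gives
\[\ell_M(c) \le \frac{\ell_1(c) + \ell_2(c)}{2} = \ell_3(c) \quad \text{for every closed curve } c.\]
Since $M$ and $S_3$ are closed hyperbolic surfaces of the same area $-2\pi \chi(S)$ (Gauss--Bonnet), Thurston's theorem on the asymmetric metric (equivalently, on minimal stretch maps) implies that such a pointwise domination $\ell_M \le \ell_{S_3}$ of marked length spectra forces $M = S_3$ in $\Teich(S)$. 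Hence $\ell_M(c) = \frac{1}{2}(\ell_1(c) + \ell_2(c))$ for every $c$, so $t \mapsto \ell_{\gamma(t)}(c)$ is affine; this contradicts the \emph{strict} convexity in Wolpert's theorem along the nontrivial geodesic $\gamma$. Therefore $S_1 = S_2$, and the lemma follows.

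The main obstacle is that neither ingredient alone suffices: Wolpert's theorem only produces \emph{some} hyperbolic surface, namely the WP midpoint $M$, whose spectrum is dominated by $\ell_3$, while Thurston's domination theorem is needed to identify this $M$ with $S_3$ itself; only after this identification does Wolpert's strictness clause close the argument.
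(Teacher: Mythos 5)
Your proof is correct, but it follows a genuinely different route from the paper. The paper's argument is couched entirely in Bonahon's theory of geodesic currents: each length function is realized as $\ell_j(c)=i(c,L_j)$ where $L_j$ is the Liouville current of $S_j$; by density of weighted closed geodesics in the space of currents, the identity $\ell_1+\ell_2=2\ell_3$ extends to $i(L_1,L_3)+i(L_2,L_3)=2\,i(L_3,L_3)$; and Bonahon's inequality $i(L,L')\ge i(L,L)$ for Liouville currents (with equality iff $L=L'$) then forces $L_1=L_2=L_3$, hence $S_1=S_2=S_3$ by his rigidity lemma. Your proof instead works on the Teichm\"uller space side: it uses Wolpert's geodesic convexity to produce the Weil--Petersson midpoint $M$ of $S_1$ and $S_2$ with $\ell_M\le\ell_{S_3}$ pointwise, then invokes Thurston's minimal-stretch rigidity ($\sup_c \ell_Y(c)/\ell_X(c)\ge 1$ with equality iff $X=Y$) to identify $M=S_3$, and finally uses Wolpert's \emph{strict} convexity to contradict the resulting midpoint equality when $\gamma$ is nonconstant. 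Both arguments are short and standard; the paper's has the mild advantage that it requires only the intersection-pairing inequality on currents (one theorem of Bonahon), whereas yours combines two separate nontrivial inputs (Wolpert's WP strict convexity and geodesic convexity of Teichm\"uller space, plus Thurston's asymmetric-metric rigidity). On the other hand, your argument makes transparent the geometric content of the hypothesis $\ell_3=\tfrac12(\ell_1+\ell_2)$: it says $S_3$ behaves like an affine midpoint in a setting where length functions are strictly convex, which is impossible unless $S_1=S_2$.
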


\begin{proof}
Recall from Bonahon, that the function $\ell_j$ extends continuously on the set of geodesic currents. They are the restriction of the intersection function with the Liouville current $L_j$ : for all $c\in \mathcal{C}$ we have $\ell_j(c) = i(c,L_j)$, \cite[Proposition 14]{BonahonGeometryofteichmuller}. Moreover, the set of closed geodesics is dense in the set of currents \cite[Proposition 2]{BonahonGeometryofteichmuller}. Hence by applying to $L_3$ the condition on the length functions we get
$$i(L_1,L_3)+ i(L_2,L_3) =2 i(L_3,L_3).$$
Here again, we use Bonahon result saying that for any two Liouville currents $L,L'$ we have $i(L,L') \geq i(L,L)$ with equality iff $L=L'$, \cite[Theorem 19]{BonahonGeometryofteichmuller}. It implies that 
$L_1=L_3$ and $L_2=L_3$. By Bonahon's  \cite[Lemma 9]{BonahonGeometryofteichmuller} we have 
$$S_1=S_2=S_3.$$
\end{proof}

\section{Appendix}
In this appendix, we  fix a $\h^{p,q}$-convex cocompact  group $\G\subset \PO(p,q+1)$. 

\subsection{Existence of conformal densities}

We fix a base point $o\in C(\Lambda)$. Let $P$ be the  Poincaré series associated to $\G$ :
$$P(s)=\sum_{\g\in \G}   e^{-s d_{\h^{p,q}}(\g o, o )}.$$

A simple computation using the definition of the critical exponent $\delta_{\h^{p,q}}(\Gamma)$ shows that $P(s)<+\infty$ when $s>\delta_{\h^{p,q}}(\Gamma)$, and $P(s)=+\infty $ if $s<\delta_{\h^{p,q}}(\Gamma)$.\\
However, as in the hyperbolic case, we don't know in advance that $P$ diverges at $\delta_{\h^{p,q}}(\G)$. In order to solve this problem, S. J.Patterson  proposed a modification of this Poincaré series using an auxiliary function $h$ {\color{black} which applies to any Dirichlet series (i.e. series of the form $\sum_{n\in \N}a_n^{-s}$).}

\begin{lemme}[Lemma 3.1 in \cite{patterson1976limit}]\label{lem - patterson lemma}
There exists an increasing function $h\, :\, \R^+\tv\R^+ $ such that  
\begin{itemize}
\item $\sum_{\g\in \G} h(d_{\h^{p,q}}(\g o, o))  e^{-s d_{\h^{p,q}}(\g o, o )}$ has the same radius of convergence as $P(s)$ and diverges at $s=\delta_{\h^{p,q}}(\G)$.
\item For all $\varepsilon >0$  there exists $y_0>0$ such that for all $y>y_0$, $x>1$ we have : 
\begin{eqnarray*}
h(y+x) < e^{\varepsilon x} h(y).
\end{eqnarray*}
\end{itemize}

\end{lemme}
%
%

We call $Q$ the modified Poincaré series : 
$$Q(s)=\sum_{\g\in \G}  h(d_{\h^{p,q}}(\g o, o )) e^{-s d_{\h^{p,q}}(\g o, o )}.$$

{\color{black} From now on, we consider measures on the metrizable compact set $\overline{\h^{p,q}}=\h^{p,q}\cup\partial\h^{p,q}\subset \R\proj^{p+q}$.\\}
We denote by $\Delta_x$ the Dirac mass at $x$.  We consider the following family of measures for every $x\in C(\Lambda),\, s>\delta_{\h^{p,q}}(\G)$:
$$\mu_x^s = \frac{\sum_{\g\in \G}    h(d_{\h^{p,q}}(\g o, x ))e^{-s d_{\h^{p,q}}(\g o,x)} \Delta_{\g o}}{Q(s)}.$$
{\color{black} We now wish to consider a converging subsequence (for the weak topology) of these measures as $s\to\delta_{\h^{p,q}}(\G)$. In order to prove the existence of such limits, we must show that the total mass is bounded. For these limits to be non trivial, we also need to find a lower bound for the total mass.

\begin{lemme} \label{lemme - masse totale bornee}
For any $x\in C(\Lambda)$, there is $M(x)>0$ such that   \[\frac{1}{M(x)}\leq \mu_x^s(\overline{\h^{p,q}}) \leq M(x)\] for all $s>\delta_\AdS(\G)$.
\end{lemme}
\begin{proof} By Lemma \ref{lem - patterson lemma}, we can consider $y_0>0$ such that:
\[ \forall y>y_0~\forall x>1~~h(y+x)<e^xh(y).\]
Since $h$ is increasing, we also have:
\[ \forall y>y_0~\forall x\geq 0~~h(y+x)<e^{x+1}h(y).\]
Let $\G_+=\{\g\in\G \vert d_{\h^{p,q}}(\gamma.o,o)>y_0\}$. Note that $\G\setminus\G_+$ is finite. Remembering that $h$ is increasing, we find:
\begin{eqnarray*} \mu_x^s(\overline{\h^{p,q}})&=&\frac{\sum_{\g\in \G}    h(d_{\h^{p,q}}(\g o, x ))e^{-s d_{\h^{p,q}}(\g o,x)}}{Q(s)}\\
&\leq & \frac{\sum_{\g\in \G_+}    h(d_{\h^{p,q}}(\g o, o )+d_{\h^{p,q}}(x,o)+k_\G)e^{-s d_{\h^{p,q}}(\g o,x)}}{Q(s)} + \frac{\sum_{\g\in\G\setminus\G_+}h(y_0)}{Q(s)}\\
&\leq & \frac{\sum_{\g\in \G_+} e^{ d_{\h^{p,q}}(x,o)+k_\G+1}   h(d_{\h^{p,q}}(\g o, o ))e^{-s d_{\h^{p,q}}(\g o,x)}}{Q(s)}+\frac{h(y_0)\sharp\G\setminus \G_+}{Q(s)}\\
&\leq &e^{ d_{\h^{p,q}}(x,o)+k_\G+1} \frac{Q(s)}{Q(s)}+\frac{h(y_0)\sharp\G\setminus \G_+}{Q(s)}\\
&\leq &e^{ d_{\h^{p,q}}(x,o)+k_\G+1}+\frac{h(y_0)\sharp\G\setminus \G_+}{Q(s)}.
\end{eqnarray*}
Since $Q$ diverges at $\delta_{\h^{p,q}}(\G)$, we find an upper bound for $\mu_x^s(\overline{\h^{p,q}})$.

For the other inequality, first notice that we have:
\[\forall x\geq 0 ~\forall y>y_0+x~~ h(y-x)\geq e^{-x-1}h(y).\]
Now consider $\G_0=\{\g\in\G \vert d_{\h^{p,q}}(\g.o,o)>d_{\h^{p,q}}(o,x)+k_\G+y_0\}$. 
\begin{eqnarray*} \mu_x^s(\overline{\AdS^{n+1}})&=&\frac{\sum_{\g\in \G}    h(d_{\h^{p,q}}(\g o, x ))e^{-s d_{\h^{p,q}}(\g o,x)}}{Q(s)}\\
&\geq & \frac{\sum_{\g\in \G_0}    h(d_{\h^{p,q}}(\g o, o )-d_{\h^{p,q}}(x,o)-k_\G)e^{-s d_{\h^{p,q}}(\g o,x)}}{Q(s)}\\
&\geq & e^{-d_{\h^{p,q}}(x,o)-k_\G-1} \frac{\sum_{\g\in \G_0}    h(d_{\h^{p,q}}(\g o, o ))e^{-s d_{\h^{p,q}}(\g o,x)}}{Q(s)}\\
&\geq & e^{-d_{\h^{p,q}}(x,o)-k_\G-1} \left( 1- \frac{\sum_{\g\in \G\setminus\G_0}    h(d_{\h^{p,q}}(\g o, o ))e^{-s d_{\h^{p,q}}(\g o,x)}}{Q(s)} \right).
\end{eqnarray*}
Since $\G\setminus \G_0$ is finite and $Q(s)$ diverges at $\delta_{\h^{p,q}}(\gamma)$, we can find a positive lower bound independent on $s$.
\end{proof}

We can now consider a weak limit $\mu_x$ of $\mu_x^s$ as   $s\tv \delta_{\h^{p,q}}(\Gamma)$. }
The usual remark also stands: there could a priori exist different weak limits but we will see that it is in fact unique and will be called the Patterson-Sullivan density.

\begin{prop}\label{pr - patterson est une conformal density}
$(\mu_x)_{x\in C(\Lambda)}$ is a conformal density of dimension $\delta_{\h^{p,q}}(\Gamma)$. 
\end{prop}

\begin{proof}
Let us show the invariance. Let $g\in\G$. 
\begin{eqnarray*}
g^*\mu^s_x (E) &= &\mu^s_x(g^{-1} E)  \\
						&=& \frac{\sum_{\g\in \G}    h(d_{\h^{p,q}}(\g o,x ))e^{-s d_{\h^{p,q}}(\g o,x)} \Delta_{\g o}(g^{-1} E)}{Q(s)}\\
						&=& \frac{\sum_{\g\in \G}    h(d_{\h^{p,q}}(\g o, x ))e^{-s d_{\h^{p,q}}(\g o,x)} \Delta_{g\g o}( E)}{Q(s)}\\
						&=& \frac{\sum_{\g'\in \G}    h(d_{\h^{p,q}}( \g' o, g x ))e^{-s d_{\h^{p,q}}(\g' o,g x)} \Delta_{\g' o}( E)}{Q(s)}\\
						&=& \mu^s_{g x} (E).
\end{eqnarray*}
The limit as   $s\tv \delta_{\h^{p,q}}(\Gamma)$ gives the invariance of $\mu_x$ by $\G$. 

Let $\epsilon>0$. 
Let $N(\xi)\subset \overline{\h^{p,q}}$ be a neighborhood of $\xi \in \Lambda$, such that $| \beta_\xi(x,y) - (d_{\h^{p,q}}(z,x)-d_{\h^{p,q}}(z,y) )| \leq \epsilon$ for $z \in N(\xi)$.
We have 
\begin{eqnarray*}
\mu^s_x(N_\xi) &=& \frac{1}{Q(s)} \sum_{\g o \in N(\xi)} h(d_{\h^{p,q}}(\g o , x) ) e^{-sd_{\h^{p,q}}(\g o , x) } \\
   						&\leq & \frac{1}{Q(s)} e^{s\epsilon} e^{-s\beta_\xi(x,y)} \sum_{\g o \in N(\xi)} h(d_{\h^{p,q}}(\g o , x) ) e^{-sd_{\h^{p,q}}(\g o , y)  } 
\end{eqnarray*}
The Patterson function $h$ is increasing and $d_{\h^{p,q}}(\g o , x) \leq d_{\h^{p,q}}(\g o , y)  +k_\G$, hence

\begin{eqnarray*}
\mu^s_x(N_\xi) &\leq & \frac{1}{Q(s)} e^{s\epsilon} e^{-s\beta_\xi(x,y)} \sum_{\g o \in N(\xi)} h(d_{\h^{p,q}}(\g o , y)+k_\G ) e^{-sd_{\h^{p,q}}(\g o , y)  } .
\end{eqnarray*}
By the second property of the Patterson function and the fact that $d_{\h^{p,q}}(\g o, x) \tv \infty$ as $\g o \tv \xi$ we have : 
\begin{eqnarray*}
\mu^s_x(N_\xi) &\leq & \frac{1}{Q(s)} e^{s\epsilon} e^{-s\beta_\xi(x,y)}e^{\epsilon k_\G} \sum_{\g o \in N(\xi)} h(d_{\h^{p,q}}(\g o , y) ) e^{-sd_{\h^{p,q}}(\g o , y)  } \\
&\leq & e^{s\epsilon} e^{\epsilon k_\G} e^{-s\beta_\xi(x,y)} \mu^s_y(N_\xi).
\end{eqnarray*}
By letting $\epsilon \tv 0$ and    $s\tv \delta_{\h^{p,q}}(\Gamma)$ we get {\color{black} that $\mu_x$ is absolutely continuous with respect to $\mu_y$, and the following bound on the density:}
$$\frac{d \mu_x}{d\mu_{y}} (\xi) \leq e^{-\delta_{\h^{p,q}}(\G)\beta_\xi (x,y)}. $$
The same computation, switching the role of $x$ and $y$ gives the other inequality. Hence we obtain the quasi-conformal relation 
\begin{equation}\label{eq - quasi -confor}
\frac{d \mu_x}{d\mu_{y}} (\xi) =e^{-\delta_{\h^{p,q}}(\G)\beta_\xi (x,y)}.
\end{equation}

Since $Q(s)\tv \infty $ as $s\tv \delta_{\h^{p,q}}(\G)$, the support of $\mu_x$ is included in $\Lambda$. Moreover from the quasi-conformal relation $\mu_x$ and $\mu_{y}$ have the same support. Recall that the action of $\G$ on $\Lambda$ is minimal, therefore thanks to the invariance by $\G$, this implies that the support is {\color{black} either empty or equal to $\Lambda$. By Lemma \ref{lemme - masse totale bornee}, it cannot be empty.
}

\end{proof}

\subsection{Ergodicity}\label{sec - ergodicity }

{\color{black}{We will now prove the ergodicity of conformal densities. 

The proof uses the existence of Lebesgue density points,  which is a particular case of Lebesgue differentiation theorem. The latter relies on the Hardy-Littlewood inequality and some abstract measure theoretic results. 
Therefore, in order to show the existence of density points in the pseudo-Riemannian context, it is sufficient to obtain a Hardy-Littlewood type inequality  for pseudo-Riemannian balls in the boundary.

Let $\phi\in L^1(\nu_x)$ and define the maximal function associated to $\phi$ by 
$$\phi^* (\xi) =\lim_{\epsilon\tv 0 } \sup \frac{1}{\nu_x(B_x(\xi,\epsilon) )} \int_{ B_x(\xi,\epsilon)} \phi d \nu_x.$$
\begin{lemme}[Hardy-Littlewood maximal inequality]
There exist $C>0$ such that for all $\phi$ and all $\alpha>0$
$$\nu_x(\{\phi^* >\alpha\} ) \leq \frac{C}{\alpha}\|\phi\|_{\nu_0}.$$
\end{lemme}
In Federer \cite{Federer}, it is shown that Hardy-Littlewood inequality is true for separable metric space with doubling measure.  Theorem \ref{measure_balls},  shows that the measure $\nu_x$ is doubling. Using Vitali's covering lemma \ref{Vitali_distance} for balls in the boundary,  we can generalize the usual metric proof to our context. As we said this implies, using only measure theoretic arguments: 
\begin{lemme}[Lebesgue differentiation theorem]
For all $f\in L^1(\nu_x)$ for $\nu_x$ almost all $\xi$
$$\lim_{n\tv \infty} \frac{1}{\nu_x( \mathcal S_R(x,g_n x) )} \int_{\mathcal S_R(x,g_n x)} fd\nu_x=f(\xi),$$
as $d(x,g_n x)\tv \infty$ and $\xi \in \mathcal S_R(x,g_n x).$
\end{lemme}
}}


\begin{theorem}
A conformal density $(\nu_x)_{x\in C(\Lambda)}$ is ergodic.
\end{theorem}

\begin{proof}
Let $(\nu_x)_{x\in C(\Lambda)}$ be a conformal density, and fix some $x\in C(\Lambda)$. \\
Let $A$  be a $\G$-invariant subset of $\Lambda$. Suppose that $\nu_x(A) >0$ and let $\xi\in A$ be a density point and $\g_n x$ be a radial sequence converging to $\xi$, such that
$$\lim_{n\tv \infty} \frac{\nu_x ( \mathcal S_R(x,\g_n x) \cap A)}{\nu_x( \mathcal S_R(x,\g_n x) )} =1.$$
Remark that for any Borelian set $E\subset \Lambda$ and element $\g\in \G$ we have 
\begin{eqnarray*}
\nu_x(\g^{-1} E)  
							&=& \nu_{\g x} (E)\\
							&=& \int_{E} e^{-\beta_\xi( \g x , x ) } d\nu_x(\xi).
\end{eqnarray*}
Applying this to $\mathcal S_R(\g_n^{-1} x, x) \cap A = \g_n^{-1} \Big(\mathcal S_R( x, \g_n x) \cap A\Big)$ we have 

\begin{eqnarray*}
\nu_x( \mathcal S_R(\g_n^{-1} x, x) \cap A) &=& \int_{\mathcal S_R( x ,\g_n  x) \cap A } e^{-\beta_\xi (\g_n x, x) }d\nu_x(\xi). 
\end{eqnarray*}

It follows from Lemma \ref{shadow_1} that there exists $C>0$ such that the following inequalities hold for all $\xi \in \mathcal S_R( x ,\g_n  x) $:
$$ d_{\h^{p,q}}(x,\g_n x)-C\leq -\beta_\xi (\g_n x, x)  = \beta_\xi (x,\g_n x) \leq  d_{\h^{p,q}}(x,\g_n x)+C .$$
Hence 
\begin{eqnarray*}
\frac{\nu_x (\mathcal S_R(  \g_n^{-1} x, x) \cap A ) }{\nu_x (\mathcal S_R(  \g_n^{-1} x, x) )}  &=& 1 - \frac{\int_{\mathcal S_R(x, \g_n x)\cap A^c } e^{-\beta_\xi (\g_n x,x) }d\nu_x(\xi) }{ \int_{\mathcal S_R(x,  \g_n  x) } e^{-\beta_\xi (\g_n x,x) }d\nu_x(\xi)} \\
																															&\geq & 1- \frac{e^C e^{d_{\h^{p,q}} (\g_n x,x) }}{e^{-C} e^{d_{\h^{p,q}}(\g_n x,x)}} \frac{ \int_{\mathcal S_R( x ,\g_n  x) \cap A^c } d\nu_x(\xi) }{ \int_{\mathcal S_R( x ,\g_n  x) } d\nu_x(\xi) } \\
																															&\geq & 1- K  \frac{ \int_{\mathcal S_R( x ,\g_n  x)\cap A^c } d\nu_x(\xi) }{ \int_{\mathcal S_R( x ,\g_n  x) } d\nu_x(\xi) } \\
																																&\geq & 1- K  \frac{\nu_x(\mathcal S_R( x ,\g_n  x) \cap A^c) }{\nu_x(\mathcal S_R( x ,\g_n  x) ) }.
\end{eqnarray*}

Since $\xi $ is a density point for all $\varepsilon>0$, there exists $n$ sufficiently large such that 
\begin{eqnarray*}
\frac{\nu_x(\mathcal S_R( x ,\g_n  x) \cap A^c) }{\nu_x(\mathcal S_R( x ,\g_n  x) )} \leq \varepsilon.
\end{eqnarray*}
It follows that 
\begin{eqnarray}\label{eq - 1.35}
\frac{\nu_x (\mathcal S_R(  \g_n^{-1} x, x)\cap A ) }{\nu_x (\mathcal S_R(  \g_n^{-1} x, x) )}  \geq 1-K\varepsilon.
\end{eqnarray}
Finally, from Corollary \ref{coro - grande ombre grande masse} there exists $R$ sufficiently large such that for all $\g_n$ 
$$\nu_x (\mathcal S_R(  \g_n^{-1} x, x) )\geq \nu_x (\Lambda) -\varepsilon.$$ 
Putting everything together we have 
\begin{eqnarray*}
\nu_x(A) &\geq & \nu_x (\mathcal S_R(  \g_n^{-1} x, x)\cap A) \\
				&\geq & (1-K\varepsilon) \nu_x( \mathcal S_R(  \g_n^{-1} x, x)) \quad \text{From } (\text{Eq}.\ref{eq - 1.35})\\
				&\geq &  (1-K\varepsilon) (\nu_x ( \Lambda) -\varepsilon). 
\end{eqnarray*}
Since $\varepsilon$ is arbitrary, we have $\nu_x(A) = \nu_x(\Lambda)$, this shows the ergodicity of $\nu$. 
\end{proof}

\begin{prop} \label{uniqueness_conformal_density}
The Patterson-Sullivan density is the only conformal density up to a multiplicative constant.
\end{prop}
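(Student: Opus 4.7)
The plan is to imitate the classical argument of Sullivan: given any conformal density $\nu$, show that its Radon–Nikodym derivative with respect to the Patterson–Sullivan density $\mu$ is both basepoint-independent and $\Gamma$-invariant, then invoke ergodicity to force it to be constant. Since by Corollary \ref{s_equals_delta} every conformal density has dimension $\delta_\Gamma$, we may compare $\mu$ and $\nu$ in the same conformal class.

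To avoid worrying a priori about mutual absolute continuity, I would first form $\sigma_x = \mu_x + \nu_x$. A direct check shows that $(\sigma_x)_{x\in C(\Lambda)}$ is again a conformal density of dimension $\delta_\Gamma$: $\Gamma$-equivariance and the support condition are immediate, and both $\mu_x$ and $\nu_x$ transform from $y$ to $x$ by the same factor $e^{-\delta_\Gamma \beta_\xi(x,y)}$, so their sum does too. Define $\varphi_x = d\mu_x/d\sigma_x \in [0,1]$, a function defined $\sigma_x$-a.e. Writing $d\mu_x = e^{-\delta_\Gamma \beta_\xi(x,y)}\, d\mu_y$ and $d\sigma_x = e^{-\delta_\Gamma \beta_\xi(x,y)}\, d\sigma_y$ and comparing via the chain rule shows that $\varphi_x(\xi) = \varphi_y(\xi)$ for $\sigma_y$-almost every $\xi$. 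Hence $\varphi_x$ descends to a function $\varphi$ on $\Lambda$ that does not depend on $x$.

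Next I would verify that $\varphi$ is $\Gamma$-invariant. Fix $\gamma \in \Gamma$ and a Borel set $E \subset \Lambda$. Using the equivariance relations $\gamma^*\mu_x = \mu_{\gamma x}$ and $\gamma^*\sigma_x = \sigma_{\gamma x}$, one computes
\[
\int_E \varphi(\eta)\, d\sigma_{\gamma x}(\eta) = \mu_{\gamma x}(E) = \mu_x(\gamma^{-1}E) = \int_{\gamma^{-1}E} \varphi(\xi)\, d\sigma_x(\xi) = \int_E \varphi(\gamma^{-1}\eta)\, d\sigma_{\gamma x}(\eta),
\]
so $\varphi \circ \gamma^{-1} = \varphi$ $\sigma_{\gamma x}$-a.e., and therefore $\varphi$ is $\Gamma$-invariant. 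By the ergodicity theorem proved in Section \ref{sec - ergodicity } applied to the conformal density $\sigma$, the function $\varphi$ is $\sigma_x$-a.e.\ equal to a constant $c \in [0,1]$. Consequently $\mu_x = c\,\sigma_x = c(\mu_x + \nu_x)$, which rewrites as $(1-c)\mu_x = c\,\nu_x$. The case $c = 1$ is impossible since it would force $\nu_x = 0$, contradicting $\supp \nu_x = \Lambda$; hence $\nu_x = \tfrac{1-c}{c}\,\mu_x$, with the proportionality constant independent of $x$ because $c$ was.

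The main subtlety in this plan is bookkeeping the equivariance conventions and verifying that the Radon–Nikodym derivative really is basepoint-free; but no serious obstacle arises, since both the ergodicity of conformal densities and the cocycle relation $d\mu_x/d\mu_y = d\sigma_x/d\sigma_y$ have already been established.
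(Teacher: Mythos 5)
Your argument is correct and is essentially the paper's own proof, made more explicit: both form the auxiliary conformal density $\sigma = \mu + \nu$, use the trivial absolute continuity $\mu_x \ll \sigma_x$ and the cocycle relation to show the Radon--Nikodym derivative $\varphi$ is basepoint-independent and $\Gamma$-invariant, and then invoke the ergodicity established in Section~\ref{sec - ergodicity } to conclude $\varphi$ is constant. The only point you glossed over is that $c\neq 0$ (otherwise $\mu_x=0$, contradicting $\supp\mu_x=\Lambda$); this is needed to write $\nu_x = \tfrac{1-c}{c}\mu_x$ and is as immediate as the exclusion of $c=1$.
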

\begin{proof}
Let $\mu$ be the Patterson-Sullivan density, and let $\nu$ be another conformal density. We know from Corollary \ref{s_equals_delta}, we know that the dimension of $\nu$ is $\delta_{\h^{p,q}}(\G)$. It follows that $\mu+\nu$ is also a conformal density of dimension $\delta_{\h^{p,q}}(\G)$. The measure $\mu_x$ is absolutely continuous with respect to $\mu_x+\nu_x$, so they differ by a density function. Since these measures are both conformal measures, this function is invariant under $\Gamma$. It follows from the ergodicity that this function is constant $\mu_x+\nu_x$-almost everywhere (the constant being independent of $x$), so $\mu$ and $\mu+\nu$ are proportional, hence the proportionality between $\mu$ and $\nu$.
\end{proof}

~\\
\footnotesize \textsc{}\\
 \emph{E-mail address:}  \verb|olivier.glrx@gmail.com|
~\\
\footnotesize \textsc{Université du Luxembourg, Campus Kirchberg, 6, rue Richard Coudenhove-Kalergi, L-1359 Luxembourg}\\
 \emph{E-mail address:}  \verb|daniel.monclair@math.u-psud.fr|
 ~\\
 \footnotesize \textsc{Institut de Mathématique d'Orsay, Bâtiment 307, Université Paris-Sud, F-91405 Orsay Cedex, France}\\

\end{document}